\apptocmd{\sloppy}{\hbadness 10000\relax}{}{}
\apptocmd{\sloppy}{\hbadness 10000\relax}{}{}
\newtheorem{Question}{Question}
\newtheorem{lemm}{Lemma}[section]
\crefname{lemm}{Lemma}{Lemmata}
\newtheorem{theo}[lemm]{Theorem}
\crefname{theo}{Theorem}{Theorems}
\newtheorem{prop}[lemm]{Proposition}
\newtheorem{coro}[lemm]{Corollary}
\newtheorem*{Conjecture*}{Conjecture}
\newtheorem*{example*}{Example}
\newtheorem*{Claim*}{Claim}
\newtheorem*{theo*}{Theorem}
\newtheorem*{prop*}{Proposition}
\newtheorem*{coro*}{Corollary}
\newtheorem*{Question*}{Question}
\newcounter{claimcounter}
\numberwithin{claimcounter}{lemm}
\theoremstyle{definition}
\newcommand{\newqedtheorem}[1]{%
  \newenvironment{#1}
    {\pushQED{\qed}\csname inner@#1\endcsname}
    {\popQED\csname endinner@#1\endcsname}%
  \newtheorem*{inner@#1}%
}
\theoremstyle{remark}
\newcommand{\newqednumtheorem}[1]{%
  \newenvironment{#1}
    {\pushQED{\qed}\csname inner@#1\endcsname}
    {\popQED\csname endinner@#1\endcsname}%
  \newtheorem{inner@#1}%
}
\DeclareMathOperator{\im}{Im}
\DeclareMathOperator{\rk}{rk}
\DeclareMathOperator{\kdim}{K.dim}
\DeclareMathOperator{\lkdim}{l.K.dim}
\DeclareMathOperator{\ann}{ann}
\DeclareMathOperator{\End}{End}
\DeclareMathOperator{\Diag}{Diag}
\DeclareMathOperator{\Mat}{Mat}
\DeclareMathOperator{\reg}{reg}
\DeclareMathOperator{\id}{id}
\DeclareMathOperator{\sg}{sgn}
\newcommand{\ZZ}{\mathbb{Z}}
\newcommand{\RR}{\mathbb{R}}
\newcommand{\PP}{\mathbb{P}}
\newcommand{\DC}{\mathcal D}
\newcommand{\OC}{\mathcal O}
\newcommand{\PC}{\mathcal P}
\newcommand{\UC}{\mathcal U}
\newcommand{\mf}{\mathfrak m}
\newcommand{\nf}{\mathfrak n}
\date{December 2020}
\title[On the space of Sylvester matrix rank functions]{On the space of Sylvester matrix rank functions}
\author{Andrei Jaikin-Zapirain}
\email{\href{mailto:andrei.jaikin@uam.es}{andrei.jaikin@uam.es}}
\address{Universidad Aut\'onoma de Madrid, Ciudad Universitaria de Cantoblanco, s/n, 28049 Madrid, Spain.}
\author{Diego L\'opez-\'Alvarez}
\email{\href{mailto:diego.lopez@icmat.es}{diego.lopez@icmat.es}}
\address{Instituto de Ciencias Matem\'aticas (ICMAT), Nicol\'as Cabrera 13-15, Campus de Cantoblanco UAM, 28049 Madrid, Spain.}
\begin{document}

\begin{abstract}  Given a ring $R$, the notion of Sylvester rank function was conceived within the context of Cohn's classification theory of epic division $R$-rings.

In this paper we study and describe the space of Sylvester rank functions on certain families of rings, including Dedekind domains, simple left noetherian rings and skew Laurent polynomial rings $\DC[t^{\pm 1};\tau]$ for any division ring $\DC$ and any automorphism $\tau$ of $\DC$. 
 \end{abstract}
 \maketitle
\tableofcontents

\section*{Introduction}

Sylvester matrix rank functions, originally introduced by P. Malcolmson under the name \emph{algebraic rank functions} in \cite{Malcolmson1980}, made their first appearance within the framework of P.M. Cohn's theory of epic division rings. 

Given a ring $R$, an \emph{epic division $R$-ring} is a pair ($\DC$, $\varphi$) where $\DC$ is a division ring and $\varphi$ is a ring epimorphism from $R$ to $\DC$ (equivalently, $\varphi(R)$ generates $\DC$ as a division ring). Two epic division $R$-rings ($\DC_1, \varphi_1$) and $(\DC_2,\varphi_2)$ are said to be \emph{isomorphic} if there exists a ring isomorphism $\varphi: \DC_1\rightarrow \DC_2$ respecting the $R$-structure, i.e., such that the following diagram commutes
$$
 \xymatrix{
        & \ar@{->}[ld]_{\varphi_1} R \ar@{->}[rd]^{\varphi_2} &  \\
        \DC_1 \ar@{->}[rr]^{\varphi} & & \DC_2. 
 }
$$
Note that if $R$ is commutative, any epic division $R$-ring $(\DC,\varphi)$ must also be commutative by epicity. Moreover, it is uniquely determined by the elements in the prime ideal $\ker \varphi$, since we can recover $\DC$ (up to $R$-isomorphism) by first localizing $R$ at $\ker \varphi$ and then taking the residue field. Thus, there is a bijective correspondence between prime ideals of $R$ and epic division $R$-rings (up to $R$-isomorphism).

When $R$ is noncommutative, an epic division $R$-ring may not exist at all (for instance, if $R$ does not have invariant basis number), and even if one exists it is not necessarily determined by the kernel of the map, since there are domains which can be embedded in many different non-isomorphic epic division rings (cf. \cite{Fisher1971}). However, P.M. Cohn realized that an epic division $R$-ring $(\DC,\varphi)$ is determined by the set $\PC$ of square matrices over $R$ mapping to singular matrices over $\DC$, and that this set somehow behaves as a prime ideal. As before, there is a bijective correspondence between these sets $\PC$ and epic division $R$-rings (up to $R$-isomorphism).

One can rephrase this result in the language of Sylvester rank functions as follows.

\begin{theo*}[P.M. Cohn, P. Malcolmson]
  Given a ring $R$, there exists a bijective correspondence between integer-valued Sylvester rank functions on $R$ and epic division $R$-rings up to $R$-isomorphism.
\end{theo*}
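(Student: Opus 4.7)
The plan is to prove the two directions of the correspondence separately, and then verify the two constructions are mutually inverse.

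The forward direction is essentially formal: given an epic division $R$-ring $(\DC,\varphi)$, I would define $\rk_\varphi(A) := \rk_\DC(\varphi(A))$ for any matrix $A$ over $R$, where $\rk_\DC$ denotes the usual rank of matrices over a division ring. Checking the Sylvester axioms reduces to the standard properties of matrix rank over $\DC$, and the values are clearly in $\ZZ_{\ge 0}$. If $(\DC_1,\varphi_1)\cong(\DC_2,\varphi_2)$ through $\psi\colon\DC_1\to\DC_2$, then $\psi$ is a ring isomorphism and hence preserves matrix rank, so $\rk_{\varphi_1}=\rk_{\varphi_2}$; the forward map is well defined on $R$-isomorphism classes.

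For the reverse direction I would build an epic division $R$-ring from an integer-valued Sylvester rank function $\rk$. The natural invariant to isolate is the collection
$$\PC := \{A\in M_n(R) : n\ge 1, \ \rk(A)<n\}$$
of square matrices of non-full rank. The main technical step is to verify, using the Sylvester axioms, that $\PC$ is a \emph{prime matrix ideal} in the sense of Cohn (closed under determinantal sums, elementary operations, and diagonal extension by $1$, and with a primality condition coming from the stuffed matrix identity). Once this is done, Cohn's construction of the universal localization inverting the complement of $\PC$ yields a local ring $R_\PC$ with a canonical map $R\to R_\PC$, whose residue ring $\DC := R_\PC/J(R_\PC)$ is a division ring, and the composition $\varphi\colon R\to\DC$ is a ring epimorphism.

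Finally I would check that the two assignments are inverse to each other. Starting from $(\DC,\varphi)$, the prime matrix ideal associated to $\rk_\varphi$ is precisely the set of matrices over $R$ that become singular under $\varphi$, and Cohn's bijection between epic division $R$-rings and prime matrix ideals then recovers $(\DC,\varphi)$ up to $R$-isomorphism. Conversely, starting from $\rk$, the rank function $\rk_\varphi$ obtained from the constructed $(\DC,\varphi)$ assigns to each square $A$ the corank measured by $\PC$, which by definition equals $\rk(A)$; stability and the block axioms then extend this equality to rectangular matrices.

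The main obstacle is the reverse direction: verifying that $\PC$ is a prime matrix ideal requires a careful bookkeeping of the Sylvester axioms, and showing that the residue ring is a genuine \emph{division} ring is where the integer-valued hypothesis enters essentially. If $\rk$ took non-integer values, the same recipe would only produce an epic \emph{von Neumann regular} $R$-ring; it is the dichotomy $\rk(a)\in\{0,1\}$ for $a\in R$ that forces every nonzero element of $R_\PC/J(R_\PC)$ to be invertible.
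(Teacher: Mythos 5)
The paper gives no proof of this statement at all---it is quoted as a classical theorem of Cohn and Malcolmson, and the discussion immediately preceding it in the introduction sketches exactly the route you take: pass from an integer-valued $\rk$ to the set $\PC$ of square matrices of non-full rank, verify that $\PC$ is a prime matrix ideal, and invoke Cohn's bijection between prime matrix ideals and epic division $R$-rings, the detailed verification being carried out in \cite{Malcolmson1980}. Your outline is correct and matches that classical argument; the only imprecise point is the closing remark, since integrality is really consumed in showing that $\PC$ is closed under determinantal sums (one needs $\rk(A)<n\Rightarrow\rk(A)\le n-1$ to make the inequality $\rk(A\nabla B)\le\rk(A)+\rk(B)-(n-1)$ bite), and for a non-integer-valued $\rk$ the set $\PC$ may well still determine an epic division $R$-ring---it just no longer recovers $\rk$, which is where bijectivity fails.
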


Since then, Sylvester rank functions have proven useful for both their role as a classifying tool and as a language, and have been used in a variety of contexts. For instance, to study homomorphisms to simple artinian rings (\cite{Schofield1985}), to tackle the direct finiteness conjecture (\cite{AOP2002}), to study representations of finite dimensional algebras \cite{Elek2017}, and in recent advances regarding the strong Atiyah conjecture, the L\"uck approximation conjecture and the study of universal division rings of fractions
(\cite{Jaikin2019A}, \cite{Jaikin2019S}, \cite{Jaikin2020A}, \cite{Jaikin2020B}, \cite{Jaikin2019B}, \cite{JL2020}). 

The main concern of this paper is that, although for the previous reasons they have become interesting as objects themselves (cf. \cite{AC2019}, \cite{Li2019}, \cite{JiLi2020}), little is known in general about the space $\PP(R)$ of all the Sylvester rank functions that can be defined on a given ring $R$. In particular, the question that motivated the description of $\PP(R)$ for the families of rings presented here (essentially, polynomial rings) was the following (cf. \cite{Jaikin2019S}*{Question~8.7}).

\begin{Question} \label{Question}
  Let $R$ be a simple von Neumann regular ring with a Sylvester rank function $\rk$ such that $R$ is $\rk$-complete. Is it true that every Sylvester rank function on $Z(R)[t]$ extends uniquely to a Sylvester rank function on $R[t]$?
\end{Question}

Here, $\rk$-complete means that $\rk(a)$ is non-zero for non-zero elements $a$ of $R$ and that $R$ is complete with respect to the metric $\delta_{\rk}$ given by $\delta_{\rk}(x,y) = \rk(x-y)$. Although we will not make use of this fact, the notion of Sylvester matrix rank function on a von Neumann regular ring coincides with the notion of pseudo-rank function presented in \cite{Goodearl1991}. In particular, under the above hypothesis, it can be shown that $\mathbb P(R) = \{\rk\}$ (\cite{Goodearl1991}*{Proposition~19.13 and Theorem~19.14}).

This question arose during a first attempt of the first author to prove the transcendental inductive step in his proof of the sofic L\"uck approximation conjecture in \cite{Jaikin2019A}, and we shall give a positive answer for the particular case of simple artinian rings.

The structure of the paper is the following. In \cref{sect:Sylvester} we recall the notion of Sylvester rank function and state its basic structural properties. In \cref{sect:leftartinianprimary} we start by studying the space of rank functions on a certain subfamily of left artinian primary rings. This subfamily appears when dealing with quotients of Dedekind domains and skew Laurent polynomial rings $\DC[t^{\pm 1};\tau]$, where $\DC$ is a division ring and $\tau$ an automorphism of $\DC$, and therefore allows us to get a partial picture of the space of Sylvester rank functions for those two families, which are later studied in \cref{sect:Dedekind} and \cref{sect:Laurent}, respectively. In \cref{sect:simple_noetherian} we discuss the case of simple left noetherian rings, which also appear naturally when dealing with skew Laurent polynomial rings.

\section*{Acknowledgements}
The present work was partially supported by the grant MTM2017-82690-P of the Spanish MINECO and by the ICMAT Severo Ochoa project SEV-2015-0554, and it is part of the second author PhD project at the Autonomous University of Madrid (UAM). 

The authors want to thank Hanfeng Li and Javier S\'anchez for useful comments on a preliminary version of the document. The second author also wants to thank Carlos Abad for helpful discussions. 

\section{Sylvester rank functions and first examples} \label{sect:Sylvester}

In this section we introduce and relate the notions of Sylvester matrix and module rank function, and present some basic examples of rings for which we can describe the space of Sylvester rank functions. 

A \emph{Sylvester matrix rank function} $\rk$ on a ring $R$ is a map that assigns a non-negative real number to each matrix over $R$ and that satisfies the following.
 \begin{enumerate}
\item [(SMat1)] $\rk(A)=0$ if $A$ is any zero matrix and $\rk(1)=1$;
\item [(SMat2)]  $\rk(AB) \le \min\{\rk(A), \rk(B)\}$ for any matrices $A$ and $B$ which can be multiplied;
\item[(SMat3)] $\rk\left (\begin{array}{cc} A & 0\\ 0 & B\end{array}\right ) = \rk(A) + \rk(B)$ for any matrices $A$ and $B$;
\item[(SMat4)] $\rk \left (\begin{array}{cc} A & C\\ 0 & B\end{array}\right ) \ge \rk(A) + \rk(B)$ for any matrices $A$, $B$ and $C$ of appropriate sizes.
\end{enumerate}

Some of the basic properties of Sylvester matrix rank functions are collected in \cite{Jaikin2019A}*{Proposition~5.1}, and for a more extensive survey on the topic the reader may consult \cite{Jaikin2019S}*{Section~5}. For instance, note from (SMat1) and (SMat3) that $\rk(I_n) = n$ and hence, from (SMat2), that $n\times n$ invertible matrices have rank $n$. Moreover, from (SMat2) we also deduce that multiplication by an invertible matrix does not change the rank, since for any invertible $A\in \Mat_n(R)$ and for any $n\times m$ matrix $B$, 
$$
 \rk(B) = \rk(A^{-1}AB) \leq \rk(AB) \leq \rk(B).
$$
Similarly, we can add or delete rows and columns of zeros without changing the rank, since for any $A\in\Mat_{n\times m}(R)$ and integers $k,l\ge 0$,
$$
 \rk\left(\begin{pmatrix} A_{n\times m} & 0_{n\times l}\\ 0_{k\times m} & 0_{k\times l}\end{pmatrix}\right) = \rk\left(\begin{pmatrix}I_n \\ 0_{k\times n} \end{pmatrix} A_{n\times m} \begin{pmatrix} I_m 0_{m\times l}\end{pmatrix} \right) \le \rk(A)
$$
and the other inequality is obtained similarly.

We also need the following two properties. The first one will be used to show that the coefficients appearing in the proof of \cref{prop:Kt_tn} are non-negative.  

\begin{lemm} \label{lemm:positive_coefficients}
 Let $\rk$ be a Sylvester matrix rank function on a ring $R$. Take an element $a\in R$ and set $b_i = \rk(a^i)-\rk(a^{i+1})$. Then, for every $i\ge 0$, $b_i\ge b_{i+1}$. In particular, if $a$ is nilpotent and $a^{n+2}=0$, then $\rk(a^{n})\ge 2\rk(a^{n+1})$.
\end{lemm}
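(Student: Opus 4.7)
The plan is to prove the equivalent convexity inequality $\rk(a^i)+\rk(a^{i+2}) \ge 2\rk(a^{i+1})$ (which rearranges directly to $b_i \ge b_{i+1}$) by producing a single $2 \times 2$ matrix $M$ whose rank can simultaneously be computed exactly and bounded from below, yielding the two sides of the inequality.

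The matrix I would try is $M = \begin{pmatrix} a^i & -a^{i+1} \\ a^{i+1} & 0 \end{pmatrix}$. A short direct computation gives the factorization
\[
M = \begin{pmatrix} 1 & 0 \\ a & 1 \end{pmatrix} \begin{pmatrix} a^i & 0 \\ 0 & a^{i+2} \end{pmatrix} \begin{pmatrix} 1 & -a \\ 0 & 1 \end{pmatrix},
\]
where the outer factors are invertible in $\Mat_2(R)$ (unipotent upper/lower triangular). Applying (SMat2) in both directions, together with (SMat3) on the diagonal middle factor, forces $\rk(M) = \rk(a^i) + \rk(a^{i+2})$.

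For the lower bound, I would swap the two columns of $M$ (a rank-preserving operation by (SMat2), since permutation matrices are invertible) to obtain the upper block-triangular matrix $M' = \begin{pmatrix} -a^{i+1} & a^i \\ 0 & a^{i+1}\end{pmatrix}$. Axiom (SMat4) together with the fact that $-1 \in R$ is a unit (so $\rk(-a^{i+1}) = \rk(a^{i+1})$) then gives $\rk(M) = \rk(M') \ge 2\rk(a^{i+1})$. Combining with the previous paragraph yields the desired inequality, hence $b_i \ge b_{i+1}$.

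The \emph{in particular} clause is immediate: if $a^{n+2} = 0$, then $\rk(a^{n+2}) = 0$ by (SMat1), so $b_{n+1} = \rk(a^{n+1})$, and the just-established monotonicity $b_n \ge b_{n+1}$ rearranges to $\rk(a^n) \ge 2\rk(a^{n+1})$. No substantive obstacle is expected; the only point requiring any insight is spotting the matrix $M$ that is at the same time equivalent (via invertible conjugation) to $\diag(a^i, a^{i+2})$ and, after a column swap, block-triangular with $\pm a^{i+1}$ on the diagonal. Once $M$ is written down, the rest is a direct application of (SMat2)--(SMat4).
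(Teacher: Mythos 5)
Your proof is correct and is essentially the paper's own argument: the paper likewise conjugates $\diag(a^{n+2},a^n)$ by unipotent invertible matrices to obtain a block-triangular matrix with $a^{n+1}$ on the diagonal and applies (SMat2)--(SMat4); your matrix $M$ is the same one up to a column permutation and a sign. The concluding ``in particular'' step is also handled identically.
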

\begin{proof}
From (SMat3), (SMat4) and the previous observation, we obtain:
$$
\begin{matrix*}[l]
 \rk(a^{n+2})+ \rk(a^n) & = & \rk 
 \begin{pmatrix} 
  a^{n+2} & 0 \\
  0 & a^n 
 \end{pmatrix} \medskip \\
  & = & \rk\left(
  \begin{pmatrix} 
  -1 & a \\
  0 & 1 
 \end{pmatrix}
  \begin{pmatrix} 
  a^{n+2} & 0 \\
  0 & a^n 
 \end{pmatrix}
  \begin{pmatrix} 
  0 & 1 \\
  1 & a 
 \end{pmatrix} \right) \medskip \\
  & = & \rk
 \begin{pmatrix} 
  a^{n+1} & 0 \\
  a^n & a^{n+1} 
 \end{pmatrix} \geq 2\rk(a^{n+1}).
\end{matrix*}
$$
\end{proof} 

The second lemma studies additivity of the rank under certain conditions. In particular, when $A$ and $B$ are orthogonal and idempotent, and when $A$ and $B$ are matrices over a cartesian product $R_1 \times R_2$ such that $A\in \Mat_{n\times m}(R_1\times\{0\})$ and $B\in \Mat_{n\times m}(\{0\}\times R_2)$.

\begin{lemm} \label{lemm:additivity}
 Let $\rk$ be a Sylvester matrix rank function on a ring $R$. Let $A,B\in \Mat_{n\times m}(R)$, and assume that there exist $C\in \Mat_{n\times n}(R)$, $D \in \Mat_{m\times m}(R)$ such that $CA = A$, $BD = B$ and $AD = CB = 0$. Then $\rk(A+B) = \rk(A)+\rk(B)$.
\end{lemm}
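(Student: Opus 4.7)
The plan is to prove the two inequalities $\rk(A+B) \le \rk(A)+\rk(B)$ and $\rk(A+B) \ge \rk(A)+\rk(B)$ separately, the first by a routine argument using only (SMat2) and (SMat3), and the second by a block matrix manipulation analogous to the one used in the proof of \cref{lemm:positive_coefficients}, exploiting the hypotheses $CA=A$, $BD=B$ and $AD=CB=0$.

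For the upper bound, I would write
\[
A+B = \begin{pmatrix} I & I \end{pmatrix} \begin{pmatrix} A \\ B \end{pmatrix} \quad \text{and} \quad \begin{pmatrix} A \\ B \end{pmatrix} = \begin{pmatrix} A & 0 \\ 0 & B \end{pmatrix} \begin{pmatrix} I \\ I \end{pmatrix},
\]
so that (SMat2) gives $\rk(A+B)\le \rk\begin{pmatrix} A & 0 \\ 0 & B \end{pmatrix}$, and (SMat3) together with the observation that rows/columns of zeros do not affect the rank yields the bound $\rk(A)+\rk(B)$.

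For the lower bound, the key computation is to conjugate the block matrix $\begin{pmatrix} A+B & 0 \\ 0 & 0 \end{pmatrix}$ by the elementary invertible matrices $\begin{pmatrix} I & 0 \\ C & I \end{pmatrix}$ and $\begin{pmatrix} I & D \\ 0 & I \end{pmatrix}$ from the left and the right respectively. Using $CA=A$, $CB=0$, $AD=0$, $BD=B$ (and consequently $CAD=CBD=0$), the product becomes
\[
\begin{pmatrix} I & 0 \\ C & I \end{pmatrix} \begin{pmatrix} A+B & 0 \\ 0 & 0 \end{pmatrix} \begin{pmatrix} I & D \\ 0 & I \end{pmatrix} = \begin{pmatrix} A+B & B \\ A & 0 \end{pmatrix}.
\]
Since multiplication by invertible matrices preserves the rank and zero rows/columns can be deleted without affecting the rank, the left hand side has rank $\rk(A+B)$. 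Permuting the block columns (also an invertible column operation) transforms the right hand side into $\begin{pmatrix} B & A+B \\ 0 & A \end{pmatrix}$, to which (SMat4) applies to give
\[
\rk(A+B) = \rk\begin{pmatrix} B & A+B \\ 0 & A \end{pmatrix} \ge \rk(A)+\rk(B).
\]

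I do not expect a real obstacle here: the only creative step is guessing the correct conjugating matrices $\begin{pmatrix} I & 0 \\ C & I \end{pmatrix}$ and $\begin{pmatrix} I & D \\ 0 & I \end{pmatrix}$, which is suggested precisely by the orthogonality-type hypotheses $AD=CB=0$ and the idempotency-type relations $CA=A$, $BD=B$. Once the correct product is identified, everything follows from the four axioms (SMat1)--(SMat4) exactly as in the proof of the previous lemma.
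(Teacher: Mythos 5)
Your proof is correct, and the essential idea --- using the elementary block matrices built from $C$ and $D$ together with the relations $CA=A$, $BD=B$, $AD=CB=0$ --- is exactly the one the paper uses. The packaging differs slightly: the paper runs a single chain of equalities, multiplying $\begin{pmatrix} A & 0 \\ 0 & B\end{pmatrix}$ by invertible matrices (including $\begin{pmatrix} I & -C \\ 0 & I\end{pmatrix}$ and $\begin{pmatrix} I & -D \\ 0 & I\end{pmatrix}$) until it collapses to $\begin{pmatrix} A+B & 0 \\ 0 & 0\end{pmatrix}$, so that equality follows from (SMat3) and rank-invariance under invertible multiplication alone, with no appeal to (SMat4). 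You instead prove the two inequalities separately: the upper bound by the routine factorization through $\begin{pmatrix} A \\ B\end{pmatrix}$ (which, as you implicitly note, needs none of the hypotheses), and the lower bound by expanding $\begin{pmatrix} A+B & 0 \\ 0 & 0\end{pmatrix}$ into $\begin{pmatrix} B & A+B \\ 0 & A\end{pmatrix}$ and invoking (SMat4). Both computations check out; the paper's version is marginally more economical, while yours makes transparent which axiom is responsible for which inequality.
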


\begin{proof}
Since the rank is invariant under multiplication by invertible matrices, we have the following. 
  $$
\begin{matrix*}[l]
 \rk(A)+ \rk(B) & = & \rk 
 \begin{pmatrix} 
  A & 0 \\
  0 & B 
 \end{pmatrix} \medskip \\
 & =  & \rk\left(
  \begin{pmatrix} 
  I_n & 0 \\
  I_n & I_n 
 \end{pmatrix}
  \begin{pmatrix} 
  A & 0 \\
  0 & B 
 \end{pmatrix}
  \begin{pmatrix} 
  I_m & 0 \\
  I_m   & I_m 
 \end{pmatrix} \right) \medskip \\
  & =  & \rk\left(
  \begin{pmatrix} 
  0 & I_n \\
  I_n & 0
 \end{pmatrix}
  \begin{pmatrix} 
  I_n & -C \\
  0 & I_n
 \end{pmatrix}
  \begin{pmatrix} 
  A & 0 \\
  A+B & B 
 \end{pmatrix}
  \begin{pmatrix} 
  I_m & -D \\
  0   & I_m 
 \end{pmatrix} \right) \medskip \\
 & = & \rk
 \begin{pmatrix} 
  A+B &   0 \\
  0 & 0 
 \end{pmatrix} = \rk(A+B).
\end{matrix*}
$$
\end{proof}

A \emph{Sylvester module rank function} $\dim$ on $R$ is a map that assigns a non-negative real number to each finitely presented left $R$-module and that satisfies the following.
  \begin{enumerate}
\item [(SMod1)] $\dim \{0\} =0$, $\dim R =1$;
\item [(SMod2)]  $\dim(M_1\oplus M_2)=\dim M_1+\dim M_2$;
\item[(SMod3)] if $M_1\to M_2\to M_3\to 0$ is exact then
$$\dim M_1+\dim M_3\ge \dim M_2\ge \dim M_3.$$
\end{enumerate}

These notions of Sylvester matrix and module rank function were defined by P. Malcolmson in \cite{Malcolmson1980}, and they were shown to be in bijective correspondence. Although he considered only integer-valued rank functions, most of the original proof works without changes for the general case. We fill in the details for the sake of completeness.

\begin{prop}[Malcolmson]\label{prop:Malcolmson}
Let $R$ be a ring. 
\begin{itemize}
   \item[(i)] If $\dim$ is a Sylvester module rank function on $R$, then we can define a Sylvester matrix rank function by assigning to each $A\in \Mat_{n\times m}(R)$, the value
   $$
   \rk(A):= m-\dim(R^m/R^nA).
   $$   
   \item[(ii)] If $\rk$ is a Sylvester matrix rank function on $R$, then we can define a Sylvester module rank function by assigning to any finitely presented left $R$-module with presentation $M = R^m/R^nA$ for some $A\in \Mat_{n\times m}(R)$, the value
   $$
   \dim(M):= m-\rk(A).
   $$
   This value does not depend on the given presentation. 
\end{itemize}
We say in this case that $\rk$ and $\dim$ are \emph{associated}.
\end{prop}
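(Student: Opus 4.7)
For part (i), I verify (SMat1)--(SMat4) in turn, starting from $\dim$. (SMat1) is immediate from (SMod1)--(SMod2) since $\operatorname{coker}(0_{n\times m})\cong R^m$. (SMat3) follows from the splitting $R^{m_1+m_2}/R^{n_1+n_2}\left(\begin{smallmatrix}A & 0 \\ 0 & B\end{smallmatrix}\right)\cong \operatorname{coker}(A)\oplus\operatorname{coker}(B)$ together with (SMod2). For (SMat4), I extract the short exact sequence
\[
0\longrightarrow R^{m_2}/(R^{n_1}C+R^{n_2}B)\longrightarrow R^{m_1+m_2}/R^{n_1+n_2}\left(\begin{smallmatrix}A & C \\ 0 & B\end{smallmatrix}\right)\longrightarrow R^{m_1}/R^{n_1}A\longrightarrow 0,
\]
observe that the left-hand module is a quotient of $\operatorname{coker}(B)$, and apply (SMod3) twice. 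For (SMat2), the inequality $\rk(AB)\le\rk(B)$ comes from the natural surjection $\operatorname{coker}(AB)\twoheadrightarrow\operatorname{coker}(B)$ and (SMod3); for $\rk(AB)\le\rk(A)$, I first observe (using only (SMod3) and the cokernel picture) that right-multiplication by an invertible matrix preserves $\rk$ and that adjoining columns cannot decrease $\rk$ (via the exact sequence $R^{m'}\to R^{m+m'}/R^n(X,Y)\to R^m/R^nX\to 0$), and then apply these to the identity $(AB,-A)\left(\begin{smallmatrix}I_p & 0 \\ B & I_m\end{smallmatrix}\right)=(0,-A)$ to conclude $\rk(A)=\rk(AB,-A)\ge\rk(AB)$.

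For part (ii), the substantive point is well-definedness: given two presentations $A_i\in\Mat_{n_i\times m_i}(R)$ of the same module $M$, we must show $m_1-\rk(A_1)=m_2-\rk(A_2)$. The identification of the two presentations with $M$ lifts to matrices $P_{12}\in\Mat_{m_1\times m_2}(R)$ and $P_{21}\in\Mat_{m_2\times m_1}(R)$, together with auxiliary matrices $Q_i$ and $Y_i$, satisfying $A_iP_{ij}=Q_iA_j$ and $P_{ij}P_{ji}=I_{m_i}+Y_iA_i$. The key step is to consider the block matrix
\[
A^\star=\begin{pmatrix}A_1 & 0 \\ 0 & A_2 \\ P_{21} & -I_{m_2} \\ -I_{m_1} & P_{12}\end{pmatrix}
\]
and to compute $\rk(A^\star)$ in two ways. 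Right-multiplying by $\left(\begin{smallmatrix}I_{m_1} & P_{12} \\ 0 & I_{m_2}\end{smallmatrix}\right)$ and then applying invertible row operations driven by the relations $A_1P_{12}=Q_1A_2$ and $P_{21}P_{12}-I_{m_2}=Y_2A_2$ reduces $A^\star$ to $\left(\begin{smallmatrix}-I_{m_1} & 0 \\ 0 & A_2\end{smallmatrix}\right)$, so (SMat3) gives $\rk(A^\star)=m_1+\rk(A_2)$. Right-multiplying instead by $\left(\begin{smallmatrix}I_{m_1} & 0 \\ P_{21} & I_{m_2}\end{smallmatrix}\right)$ and using the symmetric relations reduces $A^\star$ to $\left(\begin{smallmatrix}A_1 & 0 \\ 0 & -I_{m_2}\end{smallmatrix}\right)$, so $\rk(A^\star)=\rk(A_1)+m_2$. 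Equating the two expressions yields $m_1-\rk(A_1)=m_2-\rk(A_2)$, as required.

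With well-definedness secured, (SMod1) is immediate from (SMat1), and (SMod2) follows from (SMat3) applied to the block-diagonal presentation $\left(\begin{smallmatrix}A_1 & 0 \\ 0 & A_2\end{smallmatrix}\right)$ of $M_1\oplus M_2$. For (SMod3) applied to an exact sequence $M_1\to M_2\to M_3\to 0$, I would first handle the surjection case ($\dim M_1\ge\dim M_2$ when $M_1\twoheadrightarrow M_2$) by adjoining new rows of relations to a presentation of $M_1$ and invoking (SMat2) in the form ``adjoining rows cannot decrease $\rk$''; for the general case, I construct a module $N$ with presentation $\left(\begin{smallmatrix}A_1 & 0 \\ -Y & A_3\end{smallmatrix}\right)$, where $Y$ encodes liftings of the rows of $R^{n_3}A_3$ to preimages in $M_1$, which surjects onto $M_2$; applying (SMat4) to this block matrix and combining with the surjection case yields $\dim M_2\le\dim N\le\dim M_1+\dim M_3$. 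The main obstacle throughout is the construction of $A^\star$ and the verification that the two distinct reduction paths both terminate in a cleanly computable form; everything else is bookkeeping, translating between the matrix and module pictures.
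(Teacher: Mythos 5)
Your proposal is correct and follows essentially the same route as the paper: the paper delegates part (i) and the presentation-independence in part (ii) to Malcolmson's original proof (which is exactly the block-matrix argument with $A^\star$ that you reconstruct), and its written verification of (SMod1)--(SMod3) --- the surjection case via lifting presentations and (SMat2), the general case via the block presentation $\left(\begin{smallmatrix} A & 0\\ C & B\end{smallmatrix}\right)$ and (SMat4) --- coincides with yours. One small slip worth fixing: in your (SMat4) argument the sequence $0\to R^{m_2}/(R^{n_1}C+R^{n_2}B)\to M\to \operatorname{coker}(A)\to 0$ is not exact (the left-hand map is not even well defined, since $(0,rC)$ need not lie in the row space of the block matrix unless $rA=0$); replacing the left term by $\operatorname{coker}(B)$ itself, for which $y\mapsto (0,y)$ does induce a map whose image is precisely the kernel of $M\to\operatorname{coker}(A)$, a single application of (SMod3) to $\operatorname{coker}(B)\to M\to \operatorname{coker}(A)\to 0$ yields the required inequality.
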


\begin{proof}
 In \cite{Malcolmson1980}*{Theorem~4}, Malcolmson proved the result for integer-valued rank functions. The same proof works in the general case to show that $(i)$ actually defines a Sylvester matrix rank function, and that the definition in $(ii)$ is independent of the presentation. Under the integer-valued hypothesis, the existence of a division $R$-ring associated to $\rk$ allowed him to directly check (SMod1)-(SMod3) in $(ii)$. For the general case, observe that (SMod1) and (SMod2) follow immediately from (SMat1) and (SMat3). We check (SMod3) in two steps.
 
Assume first that we have a surjective homomorphism $g: M' \rightarrow M$, and consider any presentation $M' \cong R^m/R^nA$ of $M'$. Since $M$ is finitely presented, this induces a presentation of $M$ of the form $M \cong R^m/R^kB$ with $R^nA\subseteq R^kB$. Using the lifting property of free modules, there exists a matrix $C$ such that the following commutes
    $$
    \xymatrix{ R^n \ar[d]_{r_C} \ar[r]^{r_A} & R^m \ar[r] \ar[d]^{\id_{R^m}} & M' \ar[d]^{g} \ar[r] & 0 \\ R^k \ar[r]^{r_B} & R^m \ar[r] & M \ar[r] & 0.}
    $$
Thus, $A= CB$ and as a consequence of (SMat2) we get $\dim(M')\geq \dim(M)$.

Suppose now that we have an exact sequence $M_1 \xrightarrow{f} M_2 \xrightarrow{g} M_3 \rightarrow 0$ and presentations 
    $$\begin{matrix}R^n \xrightarrow{r_A} R^m \xrightarrow{p_1} M_1 \rightarrow 0, & R^k \xrightarrow{r_B} R^l \xrightarrow{p_3} M_3 \rightarrow 0 \end{matrix}$$
of $M_1$ and $M_3$, respectively. Then $M_2$ can be $(m+l)$-generated by the image of the generators of $M_1$ defined through $p_1$ and some preimages of the generators of $M_3$ defined through $p_3$. We can construct a surjective homomorphism $\varphi: R^m\oplus R^l\to M_2$ and a homomorphism $\psi: R^n\oplus R^k\to R^m\oplus R^l$ such that the following commutes with exact rows
$$ \xymatrix{ 
0 \ar[r] & R^n \ar[d]_{r_A} \ar[r]^-{\iota_1'} & R^{n}\oplus R^k \ar[r]^-{\pi_2'} \ar[d]^{\psi} & R^k \ar[d]^{r_B} \ar[r] & 0\\
0 \ar[r] & R^m \ar[d]_{p_1} \ar[r]^-{\iota_1} & R^{m}\oplus R^l \ar[r]^-{\pi_2} \ar[d]^{\varphi} & R^l \ar[d]^{p_3} \ar[r] & 0 \\ & M_1 \ar[r]^{f} & M_2 \ar[r]^{g} & M_3 \ar[r] & 0  }
$$ 
and $\im \psi \subseteq \ker \varphi$, where $\iota_1,\iota_1',\pi_2,\pi_2'$ denote the natural embeddings and projections.
In particular, $\im \psi$ is finitely generated and $M_2' := (R^m\oplus R^l)/\im \psi$, which is then finitely presented,  admits a surjection $M_2' \to (R^{m}\oplus R^l)/\ker\varphi \cong M_2$. The first step of the proof tells us that $\dim(M_2')\ge \dim(M_2)$.

We can realize $\psi$ as right multiplication by some $(n+k)\times (m+l)$ matrix $D$, and from the commutativity of the previous diagram we can deduce that $D$ has the form
$$
 D = \begin{pmatrix} A & 0 \\ C & B\end{pmatrix}
$$
for some matrix $C$ of size $m\times k$. Since multiplying by invertible matrices does not change the rank, we get from (SMat4) that
$$
\begin{matrix*}[l]
 \rk(D) = \rk\begin{pmatrix} B & C \\ 0 & A \end{pmatrix} \ge \rk(A)+\rk(B)
\end{matrix*}
$$
As a consequence, $M_2' \cong (R^{m}\oplus R^l)/(R^n\oplus R^k)D$ satisfies 
$$
\dim(M_2') = m+l-\rk(D) \le m+l-\rk(A)-\rk(B) = \dim(M_1)+\dim(M_3).
$$
Adding everything up, we have (SMod3), what finishes the proof. 
\end{proof}

Given a ring $R$, we use $\mathbb{P} (R)$ to denote the space of Sylvester matrix (or equivalently, module) rank functions on $R$, which is a compact convex subset of the space of real-valued functions on matrices over $R$. 

We say that a rank function $\rk\in \mathbb P(R)$ is \emph{extreme} or \emph{an extreme point of $\PP(R)$} if it admits no non-trivial expression as a convex combination of two different elements in $\PP(R)$. For the families of rings that we consider here, the particular structure of finitely generated modules allows us to describe $\mathbb P(R)$ by identifying the extreme ranks, in such a way that any other rank function can be uniquely obtained from those as a (possibly infinite) convex combination.

In order to pursue this goal, we first need strategies to produce rank functions on a ring. For instance, observe that given a ring homomorphism $\varphi: R\rightarrow S$ and a Sylvester matrix rank function $\rk$ on $S$, we can define a rank function $\varphi^{\sharp}(\rk)$ on $R$ by setting $\varphi^{\sharp}(\rk)(A) = \rk(\varphi(A))$ for any matrix $A$ over $R$. Thus, we can get a partial picture of $\mathbb P(R)$ if we study the space of rank functions of proper quotients of $R$, or if we find appropriate homomorphisms to rings in which we have defined a rank function. Moreover, the induced map $\varphi^{\sharp}:\PP(S)\to \PP(R)$ is continuous and convex-linear, i.e., it preserves convex combinations. 

Among the rank functions defined in this way we distinguish those coming from homomorphisms to von Neumann regular rings, i.e., rings $\UC$ in which, for every element $x\in \UC$, there exists an element $y\in \UC$ satisfying $xyx=x$. In this case, we say that the induced rank function on $R$ is \emph{regular}, and we denote by $\mathbb P_{\reg}(R)$ the space of regular Sylvester rank functions on $R$, which is a closed convex subset of $\mathbb P(R)$ (\cite{Jaikin2019A}*{Proposition~5.9}).

When $\varphi$ is epic (i.e. right cancellable as a ring homomorphism), H. Li proved that the induced map $\varphi^{\sharp}: \mathbb P(S) \to \mathbb P(R)$ is injective (\cite{Li2019}*{Theorem~8.1}). This applies, in particular, to Ore localizations $T^{-1}R$ of $R$ with respect to a multiplicative set of non-zero-divisors $T$ satisfying the left Ore condition. In this case (\cite{Jaikin2019S}*{Proposition~5.2}), we can identify 
\begin{equation} \label{Ore}
 \mathbb P(T^{-1}R) = \{\rk\in \mathbb P(R): \rk(t)=1 \textrm{ for every } t\in T \}.
\end{equation}

Another important remark about 
$\mathbb P(R)$ (\cite{Li2019}*{Remark~7.1}) is that the spaces of rank functions of Morita equivalent rings are homeomorphic. We state and prove here the bijective correspondence to make explicit the relation between $\mathbb P(R)$ and $\mathbb P(\Mat_n(R))$, as in \cite{Goodearl1991}*{Corollary~16.10 \& Proposition~16.20}.

\begin{prop} \label{prop:Morita}
  Let $R$, $S$ be Morita equivalent rings. Then there exists a bijective correspondence between $\mathbb P(R)$ and $\mathbb P(S)$ preserving the extreme points. Moreover, if $\iota: R\rightarrow \Mat_n(R)$ is the diagonal embedding, this bijection is convex-linear and sends $\rk\in \mathbb P(\Mat_n(R))$ to the rank function $\iota^{\sharp}(\rk)$ and $\rk'\in \mathbb P(R)$ to the rank function $\frac{1}{n}\rk'$.
\end{prop}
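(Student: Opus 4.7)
The plan is to construct the bijection explicitly in the case $S=\Mat_n(R)$ and then reduce the general Morita case to it. For the matrix-algebra case I define $\Phi\colon\PP(\Mat_n(R))\to\PP(R)$ by $\Phi(\rk):=\iota^{\sharp}(\rk)$, which lies in $\PP(R)$ by the general pullback construction. Conversely, the canonical ring isomorphism $\Mat_p(\Mat_n(R))\cong\Mat_{pn}(R)$ (``block expansion'') sends each $B\in\Mat_p(\Mat_n(R))$ to a matrix $B'\in\Mat_{pn}(R)$, and I set $\Psi\colon\PP(R)\to\PP(\Mat_n(R))$ by $\Psi(\rk')(B):=\tfrac{1}{n}\rk'(B')$. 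Verifying (SMat1)--(SMat4) for $\Psi(\rk')$ is routine, since block expansion is a ring isomorphism respecting matrix multiplication and the block-diagonal and block-upper-triangular decompositions, while $(I_p)'=I_{pn}$ yields $\Psi(\rk')(I_p)=p$. Both $\Phi$ and $\Psi$ are convex-linear by inspection of the formulas.

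The core computations are $\Phi\circ\Psi=\id$ and $\Psi\circ\Phi=\id$. For $\Phi\circ\Psi$: given $A\in\Mat_m(R)$, the expansion $\iota(A)'\in\Mat_{mn}(R)$ coincides, up to a permutation of rows and columns, with $\diag(A,\dots,A)$ ($n$ copies), so (SMat3) yields $\rk'(\iota(A)')=n\rk'(A)$ and therefore $\Phi(\Psi(\rk'))(A)=\rk'(A)$. For $\Psi\circ\Phi$ I need the identity $\rk(\iota(B'))=n\rk(B)$ for every $B\in\Mat_p(\Mat_n(R))$, which is the main technical point. I would prove it by exhibiting a permutation matrix $P\in\Mat_{pn^2}(R)$, invertible over $R$ and therefore also invertible when viewed inside $\Mat_{pn}(\Mat_n(R))\cong\Mat_{pn^2}(R)$, such that $P\,\iota(B')\,P^{-1}=\diag(B,\dots,B)$ ($n$ copies of $B$, viewed in $\Mat_{pn}(\Mat_n(R))$); invariance of $\rk$ under multiplication by invertible matrices together with (SMat3) then produces the desired factor of $n$. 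Since $\Phi$ and $\Psi$ are mutually inverse convex-linear bijections, they restrict to a bijection between the extreme points of $\PP(R)$ and $\PP(\Mat_n(R))$.

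For general Morita equivalent rings $R$ and $S$ I would use the classical characterization $S\cong e\,\Mat_n(R)\,e$ for some $n\ge 1$ and some full idempotent $e\in\Mat_n(R)$, so the general case reduces to the matrix-algebra case above together with an analogous bijection $\PP(T)\leftrightarrow\PP(eTe)$ for a ring $T$ and a full idempotent $e\in T$. The latter can be obtained by restricting Sylvester module rank functions (normalizing by $\rk(e)$); this normalization is no longer convex-linear in $\rk$, but extreme points are still preserved via the standard argument of passing to the convex cone of unnormalized, non-negative Sylvester-like functionals, on which Morita equivalence acts by a genuine linear bijection, and observing that extreme points of the normalized slice correspond bijectively to extreme rays of the cone. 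The main obstacle throughout is the identity $\rk(\iota(B'))=n\rk(B)$ in the matrix-algebra case; once it is in hand, all other assertions follow formally.
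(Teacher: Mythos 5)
Your treatment of the case $S=\Mat_n(R)$ is correct but follows a genuinely different route from the paper. The paper argues at the level of Sylvester \emph{module} rank functions: from the equivalence $F\colon R\operatorname{-Mod}\to S\operatorname{-Mod}$ it sets $\dim'(N)=\dim(F(N))/\dim(F(R))$, using that $F(R)$ is a progenerator to see $\dim(F(R))>0$, and only afterwards specializes to $F(P)=P^{n\times 1}$ and $G(Q)=E_{11}Q$ to extract the formulas $\iota^{\sharp}(\rk)$ and $\tfrac1n\rk'$. You stay entirely on the matrix side: block expansion defines $\Psi$, and both composite identities reduce to $\rk'(\iota(A)')=n\rk'(A)$ and $\rk(\iota(B'))=n\rk(B)$, which indeed follow from the shuffle-permutation conjugacy of $B'\otimes I_n$ with $I_n\otimes B'$, invariance of rank under invertible factors (the permutation is invertible in $\Mat_{pn}(\Mat_n(R))$ because block expansion is a ring isomorphism), and (SMat3). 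This is a valid and more elementary verification of the ``Moreover'' part, and mutual inverse convex-linear bijections automatically preserve extreme points; what the paper's categorical argument buys, by contrast, is the general Morita statement with no extra work.

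For general Morita equivalent rings your reduction via $S\cong e\,\Mat_n(R)\,e$ with $e$ a full idempotent is legitimate, but the corner-ring step is where the real content lies and it is not actually carried out. Restriction of a rank function on $T$ to matrices over $eTe$, normalized by $\rk(e)$ (positive by fullness, since $1=\sum_i a_ieb_i$ forces $\rk(e)>0$), does give a map $\PP(T)\to\PP(eTe)$; but you never construct its inverse, and asserting that Morita equivalence ``acts by a genuine linear bijection'' on the cone of unnormalized rank functions is precisely the claim to be proved, not an argument for it. Completing this step essentially requires defining, for $\dim'\in\PP(eTe)$ and a finitely presented $T$-module $M$, the value $c\,\dim'(eM)$ via the equivalence $M\mapsto eM$ of $T\operatorname{-Mod}$ with $eTe\operatorname{-Mod}$ and checking (SMod1)--(SMod3) and bijectivity --- which is the paper's categorical argument in disguise. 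So your proposal fully establishes the explicit $\Mat_n(R)$ statement by different means, but the general-case assertion of the proposition is only sketched.
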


\begin{proof}
 Let $R$-Mod (respectively $S$-Mod) denote the category of left $R$-modules (respectively $S$-modules), and let $F: R\operatorname{-Mod} \rightarrow S\operatorname{-Mod}$, $G: S\operatorname{-Mod}\rightarrow R\operatorname{-Mod}$ be the associated equivalence between these categories with $FG$ and $GF$ naturally equivalent to the corresponding identity functors. Recall that an equivalence of categories preserves direct sums, short exact sequences and finitely presented modules (\cite{Lam1999}*{Section 18A}).

Let $\dim$ be a module rank function on $S$. Since $F(R)$ is a progenerator in $S\operatorname{-Mod}$ (\cite{Lam1999}*{Remark 18.10(A)}), $S$ is a direct summand of $F(R)^n$ for some positive integer $n$. Therefore, from (SMod1) and (SMod2) we obtain that $\dim(F(R))>0$. 

By the previous remarks, the expression $\dim'(N) = \frac{1}{\dim(F(R))}\dim(F(N))$ for an $R$-module $N$ now defines a Sylvester module rank function on $R$, so we have a map $\mathbb P(S)\rightarrow \mathbb P(R)$. Similarly, we can define a map $\mathbb P(R)\rightarrow \mathbb P(S)$. Finally, by the natural equivalence, we have $FG(M)\cong M$ for every $S$-module $M$ and $GF(N)\cong N$ for every $R$-module $N$. As a consequence, one can check that both maps are mutual inverses. 

Finally, they preserve extreme points. For instance, if we have a convex combination $\dim = \lambda\dim_1 + (1-\lambda)\dim_2$ in $\mathbb P(S)$, where $1>\lambda>0$, then we obtain a linear combination
$$
  \dim' = \displaystyle \frac{\lambda\dim_1(F(R))}{\dim(F(R))}\dim'_1+\frac{(1-\lambda)\dim_2(F(R))}{\dim(F(R))}\dim'_2
$$
which is convex, since the coefficients are positive and add up to one.

For the particular case of $S = \Mat_n(R)$, the equivalences of categories are defined on objects (cf. \cite{Lam1999}*{Theorem 17.20}) by
$$
\begin{matrix*}[c]
    F:& R\operatorname{-Mod}& \to & S\operatorname{-Mod} & & G:& S\operatorname{-Mod}& \to & R\operatorname{-Mod} \\
    &P&\mapsto& P^{n\times 1}&, & &Q & \mapsto& E_{11}Q
\end{matrix*}
$$
where $E_{11}$ is the $n\times n$ matrix having $1$ in the upper left corner and zeros everywhere else. Observe that $E_{11}Q \cong R^{1\times n} \otimes_{S} Q$ as $R$-modules and $P^{n\times 1} \cong R^{n\times 1}\otimes_R P$ as $S$-modules.

Let $\rk$ be a Sylvester matrix rank function on $S$ with associated module rank function $\dim$. Since $F(R)^n\cong S$, we have $\dim(F(R))= \frac{1}{n}$. Moreover, for any $A\in \Mat_{k\times l}(R)$, if $M = R^l/R^kA$, there exists an isomorphism of $S$-modules
$$
 \bigoplus_{i = 1}^n F(M) = \bigoplus_{i = 1}^n M^{n\times 1} \cong S^l/S^kB 
$$
where $B = \iota(A)$. Thus, if $\rk'$ is the Sylvester matrix rank function associated to $\dim'$ as defined before, then
$$
\begin{matrix*}[l]
 \rk'(A) &= l-\dim'(M) = \displaystyle l-\frac{\dim(M^{n\times 1})}{\dim(F(R))} = l-n\dim(M^{n\times 1}) =  \\[10pt]
 &=  l-\dim(S^l/S^kB) = \rk(B) = \iota^{\sharp}(\rk)(A),
 \end{matrix*}
$$
so we conclude that $\rk' = \iota^{\sharp}(\rk)$.

Conversely, if $\rk'$ is a Sylvester matrix rank function on $R$ with associated module rank function $\dim'$, and we denote by $\rk$ and $\dim$ the corresponding rank functions on $S$ given by the Morita equivalence, then, from the $R$-module isomorphisms
$$
\begin{matrix*}[c]
 G(S) \cong R^{n} & \textrm{ and } & R^{1\times n}\otimes_S S^l/S^kB \cong R^{nl}/R^{nk}B 
 \end{matrix*}
$$
where $B\in \Mat_{k\times l}(S)$ is considered on the right as an $nk\times nl$ matrix over $R$, we obtain that
$$
\begin{matrix*}[l]
 \rk(B) &= l-\dim(S^l/S^kB) = \displaystyle l-\frac{\dim'(R^{1\times n}\otimes_S S^l/S^kB)}{\dim'(G(S))} =  \\[10pt]
 &= l-\displaystyle \frac{1}{n}\dim'(R^{nl}/R^{nk}B) =  l - \frac{1}{n}(nl-\rk'(B)) = \frac{1}{n}\rk'(B),
 \end{matrix*}
$$
from where $\rk = \frac{1}{n}\rk'$. Since this correspondence preserves convex combinations, this finishes the proof.
\end{proof}

As a first example, consider a division ring $\DC$. Since every finitely generated module $M$ over $\DC$ is isomorphic to some $\DC^n$, then observe from (SMod1) and (SMod2) that, for every $\dim\in \mathbb P(\DC)$, we should have $\dim(M) = n$. Thus, there exists only one Sylvester module rank function on $\DC$, namely, its dimension function $\dim_{\DC}$, which also satisfies (SMod3). The corresponding Sylvester matrix rank function is then $\rk_{\DC}$, the usual rank on matrices over $\DC$.

Now, in view of \cref{prop:Morita}, for every positive integer $n$, there exists a unique Sylvester matrix rank function on $\Mat_n(\DC)$, namely, $\frac{1}{n}\rk_{\DC}$.

We finish the section relating the space of rank functions on a finite cartesian product of rings with the space on every factor. Together with the previous examples, this gives a complete description of the space of rank functions on a semisimple artinian ring. The example of finite cartesian products of von Neumann regular rings (and in particular semisimple artinian rings) was already studied in \cite{Goodearl1991}*{Theorem~16.5 \& Corollary~16.6}.

\begin{rem}
Let $R = R_1\times\cdots\times R_n$ and let $\pi_i:R\to R_i$ denote the natural projections. We say that the rank $\rk\in \PP(R)$ can be uniquely expressed as a convex combination of ranks on $R_i$ if there exist uniquely determined non-negative coefficients $\lambda_1,\dots, \lambda_n$ with $\sum_i \lambda_i = 1$ and, for every $\lambda_i>0$, a uniquely determined $\rk_i\in \PP(R_i)$, such that $\rk = \sum_i \lambda_i \pi_i^{\sharp}(\rk_i)$.

The assumption that $\rk_i$ exists and it is uniquely determined only for $\lambda_i>0$ is needed (cf. \cite{Goodearl1991}*{Theorem~16.5}) to address properly the cases in which some $R_i$ does not admit Sylvester rank functions or that $\lambda_i=0$ and $R_i$ admits more than one Sylvester rank function (in which case the expression would not be unique).
\end{rem}

\begin{prop} \label{prop:directproduct}
  Let $R_1, R_2$ be rings. Any Sylvester rank function on $R = R_1\times R_2$ is a uniquely determined convex combination of Sylvester rank functions on $R_1$ and $R_2$. In particular, the set of extreme points on $\mathbb P(R)$ is the disjoint union of the sets of extreme points of $\mathbb P(R_1)$ and $\mathbb P(R_2)$.
\end{prop}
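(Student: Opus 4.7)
The plan is to use the central orthogonal idempotents $e_1=(1,0)$ and $e_2=(0,1)$ of $R=R_1\times R_2$ to decompose any rank function. Note that for a matrix $A$ over $R$ we have $A=A_1+A_2$ where $A_i := e_i A$ has all entries in $e_i R\cong R_i$, and that $A_i$ satisfies $e_iA_i=A_i=A_ie_i$ while $e_iA_{3-i}=A_{3-i}e_i=0$. Taking $C=e_iI_n$ and $D=e_iI_m$ in \cref{lemm:additivity} yields
\[
\rk(A) \;=\; \rk(A_1)+\rk(A_2)
\]
for every $\rk\in\PP(R)$ and every matrix $A\in\Mat_{n\times m}(R)$. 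Specialising this to the scalar matrix $1=e_1+e_2$ gives $\lambda_1+\lambda_2=1$ where $\lambda_i:=\rk(e_i)\in[0,1]$.

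Next, for each index $i$ with $\lambda_i>0$, I would define $\rk_i\in\PP(R_i)$ by
\[
\rk_i(B) \;:=\; \tfrac{1}{\lambda_i}\,\rk(\tilde B),
\]
where, for a matrix $B$ over $R_i$, $\tilde B$ denotes its embedding into $\Mat(R)$ via $R_i\cong e_iR\hookrightarrow R$. Axiom (SMat1) holds because $\tilde{1_{R_i}}=e_i$ and $\rk(e_i)=\lambda_i$; axioms (SMat2)--(SMat4) transfer from $\rk$ to $\rk_i$ because the embedding $R_i\hookrightarrow R$ is a (non-unital) ring homomorphism compatible with forming block matrices. With $\rk_i$ thus defined, and writing $\pi_i\colon R\to R_i$ for the projections, the additive decomposition of $\rk(A)$ above becomes exactly
\[
\rk \;=\; \lambda_1\,\pi_1^{\sharp}(\rk_1)+\lambda_2\,\pi_2^{\sharp}(\rk_2),
\]
where the summand indexed by $i$ is omitted if $\lambda_i=0$.

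For uniqueness, suppose $\rk=\mu_1\pi_1^{\sharp}(\sigma_1)+\mu_2\pi_2^{\sharp}(\sigma_2)$ is another such representation. Evaluating both sides on the idempotent $e_i$ (and using $\pi_i(e_i)=1_{R_i}$, $\pi_{3-i}(e_i)=0$) forces $\mu_i=\lambda_i$. Moreover, when $\lambda_i>0$, evaluating on $\tilde B$ for an arbitrary matrix $B$ over $R_i$ gives $\rk(\tilde B)=\lambda_i\sigma_i(B)$, which by definition of $\rk_i$ forces $\sigma_i=\rk_i$. The step that requires the most care is verifying that $\rk_i$ really is a Sylvester matrix rank function on $R_i$ (in particular normalisation (SMat1), which is the reason for the factor $1/\lambda_i$); the rest of the argument is then book-keeping.

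Finally, for the statement about extreme points, observe that the map $\PP(R_i)\to\PP(R)$, $\rk_i\mapsto\pi_i^{\sharp}(\rk_i)$, is convex-linear and, by the uniqueness just established, injective. If $\rk\in\PP(R)$ is extreme and both $\lambda_1,\lambda_2>0$, then the decomposition above is a non-trivial convex combination, contradicting extremality; hence $\rk=\pi_i^{\sharp}(\rk_i)$ for some $i$ and some $\rk_i\in\PP(R_i)$, which must itself be extreme by injectivity and convex-linearity of $\pi_i^{\sharp}$. Conversely, any non-trivial decomposition of $\pi_i^{\sharp}(\rk_i)$ in $\PP(R)$ pulls back, by the uniqueness part, to a non-trivial decomposition of $\rk_i$ in $\PP(R_i)$, so $\pi_i^{\sharp}$ sends extreme points to extreme points. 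This yields the claimed disjoint union.
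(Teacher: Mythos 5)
Your proposal is correct and follows essentially the same route as the paper: the same decomposition $A=A_1+A_2$ via the central idempotents, the same appeal to \cref{lemm:additivity}, the same normalised restrictions $\rk_i=\tfrac{1}{\lambda_i}\rk\circ\iota_i$, and the same uniqueness and extreme-point arguments. The only detail worth spelling out is that when $\lambda_i=0$ one still needs $\rk(A_i)=0$ for the displayed identity to hold with that summand omitted; this follows from (SMat2) via $\rk(A_i)=\rk(e_iI_n\,A_i)\le\rk(e_iI_n)=n\lambda_i=0$, exactly as in the paper.
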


\begin{proof}
  Let $\pi_i:R\to R_i$ be the natural projections. Since $\pi_i$ is a surjective ring homomorphism, the map $\pi_i^{\sharp}: \mathbb P(R_i)\to \mathbb P(R)$ is injective (\cite{Li2019}*{Theorem~8.1}). Consider also the natural additive maps $\iota_i: R_i\to R$.
  
  Observe that if $A\in \Mat_{n\times m}(R)$, then $A = A_1+A_2$ where $A_1 = \iota_1\pi_1(A)$, $A_2 = \iota_2\pi_2(A)$. Moreover, if $I_{n,1}\in \Mat_n(R)$, $I_{m,2}\in \Mat_m(R)$ denote the diagonal matrices whose entries are all equal to $(1,0)$ and $(0,1)$, respectively, we have
  $$
  \begin{matrix*}
    I_{n,1}A_1 = A_1, & I_{n,1}A_2 = 0, & A_1I_{m,2} = 0, & A_2I_{m,2} = A_2.
  \end{matrix*}
  $$
  Thus, \cref{lemm:additivity} tells us that for any rank function $\rk\in \mathbb P(R)$, we have
  $$\rk(A) = \rk(A_1)+\rk(A_2).$$
  In particular, we obtain that $1 = \rk((1,0))+\rk((0,1))$. Now, if $\rk((1,0)) = 0$, then $\rk(A_1)=\rk(I_{n,1}A_1)\le \rk(I_{n,1}) = n\rk((1,0)) = 0$, and if $\rk((1,0))>0$, then the expression $\rk_1 = \frac{1}{\rk((1,0))}\rk\circ\iota_1$ defines a Sylvester matrix rank function on $R_1$. Similarly, if $\rk((0,1)) = 0$ then $\rk(A_2) = 0$, and we can define a rank function $\rk_2$ on $R_2$ if $\rk((0,1))>0$. One can check that
  $$
   \rk = \rk((1,0))\pi_1^{\sharp}(\rk_1) + \rk((0,1))\pi_2^{\sharp}(\rk_2),
  $$
  where we understand that $\rk_i$ is considered only when the coefficient is non-zero.
  Moreover, if we had another expression $\rk = \lambda\pi_1^{\sharp}(\rk_1') + (1-\lambda)\pi_2^{\sharp}(\rk_2')$
  for some $\rk_i'\in \mathbb P(R_i)$ and $0\le \lambda \le 1$, then $\rk((1,0)) = \lambda$ and $\rk((0,1)) = 1-\lambda$. Hence, for every matrix $B$ over $R_1$,
  $$
   \rk(\iota_1(B)) = \lambda \rk_1(B) = \lambda \rk_1'(B),
  $$
  from where $\rk_1 = \rk_1'$, and similarly $\rk_2 = \rk_2'$. Thus, the combination is unique.
  
  From the previous expression, one can also deduce that if $\rk$ is an extreme point in $\mathbb P(R)$, then either $\rk((1,0))=1$ and $\rk_1$ is an extreme point in $\mathbb P(R_1)$ or $\rk((0,1))=1$ and $\rk_2$ is an extreme point in $\mathbb P(R_2)$. 
  
  Conversely, if, for instance, $\rk_1$ is an extreme point in $\mathbb P(R_1)$, then $\pi_1^{\sharp}(\rk_1)$ is a rank function on $R$ which takes value $1$ on $(1,0)$. Therefore, if we had $\pi_1^{\sharp}(\rk_1) = \lambda\rk+(1-\lambda)\rk'$ with $1>\lambda >0$ and $\rk\ne \rk'$ on $\PP(R)$, then necessarily $\rk((1,0)) = \rk'((1,0)) = 1$ (and consequently $\rk((0,1)) = \rk'((0,1)) = 0$). Thus, reasoning as before, we can see that $\rk\circ\iota_1$ and $\rk'\circ\iota_1$ define rank functions on $R_1$ such that $\rk = \pi_1^{\sharp}(\rk\circ\iota_1)$ and $\rk' = \pi_1^{\sharp}(\rk'\circ\iota_1)$ (in particular, they are different) and 
  $$
   \rk_1 = \lambda \rk\circ\iota_1 + (1-\lambda)\rk'\circ\iota_1,
  $$
  which contradicts the fact that $\rk_1$ is extreme. This finishes the proof.
\end{proof}

An important remark before moving on to the next section is that all the rings we are going to consider from now on are left noetherian, and hence the words finitely presented and finitely generated are interchangeable for left $R$-modules. (cf. \cite{Rotman2009}*{Corollary~3.19})

\section{Left artinian primary rings} \label{sect:leftartinianprimary}

In this section we study the space of Sylvester matrix rank functions on a family of rings which is deeply related to the families in \cref{sect:Dedekind} and \cref{sect:Laurent}, namely, left artinian primary rings. More precisely, we give a description of this space for those left artinian primary rings whose Jacobson radical is generated by a central element. Throughout the section, $J(R)$ denotes the Jacobson radical of the ring $R$.

Following \cite{Pierce1982}, a ring ($\ZZ$-algebra) 
$R$ is \emph{local} if $R/J(R)$ is a division ring, or equivalently, if the set of all non-units in $R$ form a (two-sided) ideal, which is necessarily $J(R)$ (cf. the proof of \cite{Pierce1982}*{Proposition~5.2}). A ring $R$ is \emph{primary} if $R/J(R)$ is simple. 

When $R$ is left (or right) artinian, we can reduce the study of the space of rank functions on the latter family of rings to the study of local rings through \cref{prop:Morita} and the following result (\cite{Pierce1982}*{Proposition~6.5a}).

\begin{prop} \label{prop:Pierce}
 If $R$ is a left artinian primary ring, then there exist a unique $s$ and a unique (up to isomorphism) left artinian local ring $S$ such that $R\cong \Mat_s(S)$. 
\end{prop}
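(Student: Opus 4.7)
The plan is to reduce the statement to the Artin--Wedderburn decomposition of $R/J(R)$ and then lift that structure back to $R$ using the nilpotence of the Jacobson radical. First I would observe that, since $R$ is left artinian, $J(R)$ is nilpotent and $R/J(R)$ is semisimple artinian. The primary hypothesis forces $R/J(R)$ to be simple artinian, so Artin--Wedderburn provides a unique integer $s\ge 1$ and a division ring $D$, unique up to isomorphism, with $R/J(R)\cong \Mat_s(D)$. This pins down $s$.

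The main technical step is then the lifting of a complete system of matrix units $\{\bar e_{ij}\}_{1\le i,j\le s}$ of $R/J(R)$ to a complete system $\{e_{ij}\}$ in $R$. I would do it in two stages. Since $J(R)$ is nil, orthogonal idempotents lift modulo $J(R)$ by the standard inductive (Brauer-type) argument, so the $\bar e_{ii}$ lift to orthogonal idempotents $e_{ii}\in R$ with $\sum_i e_{ii}=1$. Next, for each pair $(i,j)$, a lift $e_{ij}\in e_{ii}Re_{jj}$ of $\bar e_{ij}$ has to be chosen so that the matrix-unit relations $e_{ij}e_{kl}=\delta_{jk}\,e_{il}$ hold; this is possible because each $\bar e_{ij}$ induces an isomorphism between the corner rings $\bar e_{jj}(R/J(R))\bar e_{jj}$ and $\bar e_{ii}(R/J(R))\bar e_{ii}$, and the nilpotence of $J(R)$ allows one to correct the lifts iteratively. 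Once the matrix units are in place, setting $e:=e_{11}$ and $S:=eRe$, the Peirce decomposition $R=\bigoplus_{i,j} e_{ii}Re_{jj}$ together with the isomorphisms $e_{ii}Re_{jj}\cong S$ induced by the $e_{ij}$ yields the desired $R\cong \Mat_s(S)$.

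It then remains to check that $S$ is left artinian and local, and to prove uniqueness. Left artinianness of $S=eRe$ follows from the Morita equivalence between $R$ and $S$ implemented by the progenerator $Re$. For locality, one computes $S/J(S)=eRe/eJ(R)e\cong e(R/J(R))e$, which is the $(1,1)$-corner of $\Mat_s(D)$ and hence isomorphic to $D$; since this quotient is a division ring, $S$ is local. For uniqueness, if $R\cong \Mat_s(S)\cong \Mat_{s'}(S')$ with $S,S'$ local left artinian, then quotienting by Jacobson radicals yields $\Mat_s(D)\cong \Mat_{s'}(D')$, and Artin--Wedderburn forces $s=s'$ (and $D\cong D'$). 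To conclude $S\cong S'$, one identifies each corner ring with $\End_R(Re)$ for a primitive idempotent $e$ lifting a primitive idempotent of $R/J(R)$; any two such primitive idempotents of $R$ are conjugate by Krull--Schmidt applied to the decomposition of ${}_RR$, so the corresponding corner rings are isomorphic.

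The step I expect to be the main obstacle is the compatible lifting of the full system of matrix units in the second paragraph: while this is a well-known technique in the theory of semiperfect rings, it requires careful inductive bookkeeping in the filtration by powers of $J(R)$, in contrast with the comparatively formal verifications of locality, artinianness, and uniqueness.
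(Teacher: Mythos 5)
Your argument is correct, but note that the paper does not actually prove this statement: it is quoted verbatim from Pierce's \emph{Associative Algebras} (Proposition~6.5a there), so there is no in-paper proof to compare against. What you have written is essentially the standard proof that Pierce's citation stands in for: pass to $R/J(R)\cong\Mat_s(D)$ by Artin--Wedderburn, lift a complete set of matrix units through the nilpotent radical, and take $S=e_{11}Re_{11}$. All the individual steps check out. The idempotents $\bar e_{ii}$ lift to orthogonal idempotents summing to $1$ because $J(R)$ is nil; the off-diagonal units are then obtained by the usual correction (pick $a_{i1}\in e_{ii}Re_{11}$ and $a_{1i}\in e_{11}Re_{ii}$ lifting $\bar e_{i1},\bar e_{1i}$, observe $a_{1i}a_{i1}$ is a unit of the corner ring $e_{11}Re_{11}$ since it is congruent to $e_{11}$ modulo $e_{11}J(R)e_{11}=J(e_{11}Re_{11})$, and rescale); locality of $S$ follows from $S/J(S)\cong\bar e_{11}(R/J(R))\bar e_{11}\cong D$; and uniqueness of $S$ follows from Krull--Schmidt--Azumaya applied to ${}_RR\cong (Re)^s$, which is legitimate precisely because $\End_R(Re)\cong eRe$ is local. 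Your self-diagnosis is also accurate: the matrix-unit lifting is the only step requiring real bookkeeping, and the version you sketch is the standard one for semiperfect rings, so there is no gap.
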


The following example of local artinian ring serves as a motivation for the general treatment.

\subsection{The case of \texorpdfstring{$K[t]/(t^n)$}{K[t]\_tn}} \label{subsect:Kt_tn}

If $K$ is a commutative field and $n$ is a positive integer, then the ring $R = K[t]/(t^n)$ is an example of local artinian ring. As a consequence of the structure of modules on $K[t]$, every finitely generated $R$-module can be expressed as a direct sum of the indecomposable $R$-modules $K[t]/(t^i)\cong R/(t^i+(t^n))$, $1\le i \le n$. Thus, from (SMod2), any Sylvester module rank function on $R$ is determined by its values on these modules, or equivalently, any Sylvester matrix rank function is determined by its values on the elements $t^i+(t^n)$. 

We can define $n$ Sylvester matrix rank functions $\rk_1,\dots, \rk_n$ on $R$ through the canonical homomorphisms $R \rightarrow \End_K(K[t]/(t^k))\cong \Mat_k(K)$ with $p+(t^n)\mapsto \phi_k^p \mapsto A_p$, where $\phi_k^p$ is the endomorphism given by right multiplication by $p+(t^k)$ and $A_p$ is its associated matrix with respect to the canonical basis in $K[t]/(t^k)$ (here, we consider endomorphisms acting on the right, so both are ring homomorphisms). If $\rk_{K}$ denotes the usual rank function on $K$, then the unique rank function on $\Mat_k(K)$ is $\frac{1}{k}\rk_K$, and when we pull it back to $R$ we obtain a regular rank function $\rk_k$ satisfying
$$
\rk_k(t^i+(t^n)) = \begin{cases}
\frac{k-i}{k} & \textrm{if}~ i\leq k \\
0 & \textrm{otherwise}
\end{cases}
$$
Moreover, any other rank function on $R$ is a convex combination of the above ranks.

\begin{prop}\label{prop:Kt_tn}
Let $K$ be a field, $n$ a positive integer and set $R= K[t]/(t^n)$. There exist exactly $n$ extreme points in $\mathbb P(R)$, which are the Sylvester matrix rank functions $\rk_1,\dots, \rk_n$, and any other rank function can be uniquely expressed as a convex combination of the previous ones. As a consequence, $\mathbb P(R) = \mathbb P_{\reg}(R)$.
\end{prop}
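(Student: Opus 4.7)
The plan is to parametrize $\mathbb{P}(R)$ as a simplex whose vertices are the $\rk_k$'s. Since $K[t]$ is a PID, the classical structure theorem shows that every finitely generated $R$-module is a direct sum of cyclic modules $K[t]/(t^i)$ with $1\le i\le n$. Hence, by (SMod1) and (SMod2), any $\rk\in\mathbb{P}(R)$ is determined by the sequence $a_i:=\rk(t^i+(t^n))$ for $0\le i\le n-1$, so that the parametrization $\rk\mapsto (a_0,\ldots,a_{n-1})$ is injective. Setting $a_n:=0$ and $b_i:=a_i-a_{i+1}$, \cref{lemm:positive_coefficients} (applied to $a=t+(t^n)$) combined with the non-negativity of $a_{n-1}$ yields the key chain $b_0\ge b_1\ge\cdots\ge b_{n-1}\ge 0$, while telescoping gives $\sum_i b_i = a_0=1$.

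A direct calculation identifies the sequence of $\rk_k$ as $b^{(k)} = (\underbrace{1/k,\ldots,1/k}_{k},0,\ldots,0)$. I then claim that every admissible sequence $(b_0,\ldots,b_{n-1})$ is uniquely a convex combination of the $b^{(k)}$'s via the coefficients $c_j:=j(b_{j-1}-b_j)$, $j=1,\ldots,n$ (with $b_n:=0$). Non-negativity $c_j\ge 0$ is exactly the monotonicity of the $b_i$, $\sum_{j=1}^n c_j = 1$ is a telescoping computation, and the coordinatewise identity $\sum_{j}c_j b^{(j)} = (b_0,\ldots,b_{n-1})$ is another telescoping verification. Composing with the injective parametrization, the rank function $\sum_j c_j\rk_j$ and $\rk$ have the same associated sequence, so $\rk=\sum_j c_j\rk_j$; uniqueness of the combination is inherited from the explicit inversion $c_j=j(b_{j-1}-b_j)$.

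From this decomposition, extremality of each $\rk_k$ is immediate: a non-trivial splitting $\rk_k=\lambda\rho+(1-\lambda)\sigma$ would expand, via the combination formula applied to $\rho$ and $\sigma$, into a non-trivial convex expansion of $\rk_k$, contradicting the uniqueness. No other rank function is extreme, since its expansion is genuinely non-trivial. Finally, $\mathbb{P}(R) = \mathbb{P}_{\reg}(R)$ follows because each $\rk_k$ is regular by construction (it factors through the von Neumann regular ring $\Mat_k(K)$) and $\mathbb{P}_{\reg}(R)$ is convex. The only point in this plan that requires care is extracting the chain $b_0\ge\cdots\ge b_{n-1}\ge 0$ from \cref{lemm:positive_coefficients}, as it is precisely this shape that makes the $b^{(k)}$'s the vertices of the relevant simplex and pins down the formula for the coefficients $c_j$.
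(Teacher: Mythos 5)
Your proposal is correct and follows essentially the same route as the paper: reduce to the values on $t^i+(t^n)$ via the structure theorem, invert the triangular system to get $c_j=j(b_{j-1}-b_j)$, use \cref{lemm:positive_coefficients} for non-negativity, and conclude regularity from the factorization through $\Mat_k(K)$ and convexity of $\mathbb P_{\reg}(R)$. The only cosmetic difference is that you phrase the argument as identifying $\mathbb P(R)$ with a simplex of difference sequences $(b_0,\dots,b_{n-1})$, whereas the paper solves the linear system directly; the computations are identical.
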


\begin{proof}
Let $\rk$ be any rank function on $R$, and consider the system 
$$
 \rk = \sum_{k=1}^{n} c_k \rk_k, ~c_k\geq 0, ~\sum_{k=1}^{n} c_k = 1.
$$
As observed previously, any rank function on $R$ is determined by its values on $t^i+(t^n)$, $1\le i \le n-1$. Therefore, for this system to have a solution it is enough to find non-negative $c_k$ satysfying 
$$
\rk(t^i+(t^n)) = \sum_{k=1}^{n} c_k \rk_k(t^i+(t^n)) = \sum_{k=i}^{n} c_k \frac{k-i}{k}
$$
for any $0\le i \le n-1$, since the equality for $i=0$ already encodes that the coefficients add up to 1. Setting $b_k = \rk(t^k + (t^n))-\rk(t^{k+1}+(t^n))$ for $0\le k\le n-1$, we obtain that $b_k = \sum_{j = k + 1}^n \frac{1}{j}c_j$, and so the only possible solution is given by
$$
\begin{matrix*}[c]
 c_k = k (b_{k-1} - b_k), ~k = 1,\dots, n-1, & c_n = nb_{n-1}
 \end{matrix*}
$$
Finally, every $c_k\geq 0$ by \cref{lemm:positive_coefficients} and $\sum_{k=1}^n c_k = \sum_{k=0}^{n-1} b_k = \rk(1+(t^n)) = 1$. Since the solution is unique for every rank, $\rk_1,\dots,\rk_n$ are the only rank functions that cannot be expressed as a convex combination of two different ranks and hence they are the extreme points in $\mathbb P(R)$. 

The last assertion follows from the regularity of $\rk_k$ and the convexity of $\mathbb P_{\reg}(R)$.
\end{proof}

While the point of view of matrices is usually easier to understand at first, working with Sylvester module rank functions allows us to use properties that do not have an analog for matrices. In this sense, observe that the associated extreme Sylvester module rank functions $\dim_1,\dots,\dim_n$ are determined by
$$
\dim_k(R/(t^i+(t^n))) = \begin{cases}
\frac{i}{k} & \textrm{if}~ i\leq k \\
1 & \textrm{otherwise}
\end{cases}
$$

\subsection{The general case}
Let us now consider any left artinian local ring $R$. This implies in particular that $J(R)$ is nilpotent (cf. \cite{Pierce1982}*{Proposition~4.4} or \cite{GW2004}*{Theorem~4.15}). 
We show that if we further assume that $J(R)$ is generated by a central element, then essentially the same classification presented above still holds.

Thus, assume that $c\in Z(R)$ is such that $J(R) = (c)$, and let $n$ be the smallest positive integer such that $c^n = 0$. Mirroring the previous example we are going to show that every rank function on $R$ is determined by its values on $c,\dots, c^{n-1}$ and that the expressions
$$
\dim_k(R/(c^i)) = \begin{cases}
\frac{i}{k} & \textrm{if}~ i\leq k \\
1 & \textrm{otherwise}
\end{cases}
$$
can be uniquely extended to Sylvester module rank functions $\dim_1,\dots,\dim_n$ on $R$ that turn out to be the extreme points in $\mathbb P(R)$. 

Let us first study the structure of modules over this local ring. Since $R$ is local, an element $x$ is either a unit or belongs to $J(R)$. Let $x$ be a non-zero element of $J(R)$ and $m$ the positive integer such that $x\in J(R)^m\backslash J(R)^{m+1}$. Since $c\in Z(R)$, then $J(R)^m=(c^m)$ and therefore $x = c^m u$ for some unit $u\in R$.

Now, take $A \in \Mat_{k\times l}(R)$ and express every element of $A$ as $a_{ij} = c^{m_{ij}}u_{ij}$ where $u_{ij}$ is a unit (here, $m_{ij}=0$ if $a_{ij}$ is already a unit, and $m_{ij} = n$ if $a_{ij}=0$). Multiplying by invertible matrices we can assume that $m_{11} = \min\{m_{ij}\}$ and $a_{11}= c^{m_{11}}$. Thus, if $r_{ij} = m_{ij}-m_{11}$, \medskip
$$
\begin{pmatrix} 
 1 & 0 &\dots & 0\\
 -c^{r_{21}}u_{21}& 1 &      &0 \\
  \vdots & & \ddots & \vdots\\
 -c^{r_{k1}}u_{k1} & 0 &\dots & 1
 \end{pmatrix}
  A
  \begin{pmatrix} 
 1 & -c^{r_{12}}u_{12} &\dots & -c^{r_{1l}}u_{1l}\\
 0 & 1 &      &0 \\
  \vdots & & \ddots & \vdots\\
 0 & 0 &\dots & 1
 \end{pmatrix}
 =
 \begin{pmatrix} 
 c^{m_{11}} & 0 \\
 0 & B 
\end{pmatrix}
$$

\medskip
\noindent for some $(k-1)\times (l-1)$ matrix $B$. Using induction we see that $A$ is equivalent to a matrix of the form $\begin{pmatrix} D & 0 
 \end{pmatrix}$ or $\begin{pmatrix} D \\ 0 
 \end{pmatrix}$ where $D$ is a diagonal matrix whose entries are either powers $c^i$ for some $0\le i\le n-1$ or zero. 
 
This expression for a matrix implies that every finitely presented left $R$-module $M$ can be written in the form 
$$M \cong R^m \oplus \bigoplus_{i=1}^r R/(c^{m_i}).$$ 

Moreover, since $c\in Z(R)$ and $R$ is local, for each $0\le i\le n$ we have that $R/(c^i)$ is an $R$-bimodule (with the usual operations) of length $i$. Indeed, under the previous hypothesis one can show that $J(R)^{k-1}/J(R)^k\cong R/J(R)$ for every $1\le k\le n$, from where it is a simple left $R$-module, and hence the chain $$
  0 < J(R)^{i-1}/J(R)^i < \dots < J(R)/J(R)^i < R/J(R)^i
$$
is a composition series for $R/J(R)^i = R/(c^i)$ of length $i$ (in particular, $l({_R}R)=n$).

In addition, we have $R$-bimodule isomorphisms (cf. \cite{Rotman2009}*{Proposition~2.68})
$$
    R/(c^i) \otimes_R R/(c^j) \cong R/(c^{\min\{i,j\}}). 
$$
Using this fact, one can show that the previous expression for the $R$-module $M$ is unique (up to reorganization of factors) by tensoring with $R/(c^i)$ for every $i=1,\dots, n$ and comparing lengths.  

\begin{prop} \label{prop:local_art}
 Let $R$ be a left artinian local ring, and assume that there exists an element $c\in Z(R)$ with order of nilpotency $n$ such that $J(R) = (c)$. Then any rank function on $R$ is determined by its values on $c^i$ for $1\le i \le n-1$, and the expressions $\dim_1,\dots, \dim_n$ extend uniquely to Sylvester module rank functions on $R$ that are precisely the extreme points in $\mathbb P(R)$. Any other rank function can be uniquely expressed as a convex combination of the previous ones.
\end{prop}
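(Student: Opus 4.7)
The plan is to mirror the proof of \cref{prop:Kt_tn} step by step, exploiting the uniqueness of the decomposition $M \cong R^m \oplus \bigoplus_i R/(c^{m_i})$ for finitely generated $R$-modules recalled just above the statement. By that uniqueness, any $\dim \in \mathbb{P}(R)$ is determined by its values on the modules $R/(c^i)$ for $1 \le i \le n-1$, and equivalently the associated matrix rank $\rk$ is determined by $\rk(c^i) = 1 - \dim(R/(c^i))$ for those values of $i$ (the cases $i = 0$ and $i = n$ being trivial since $c^0 = 1$ and $c^n = 0$). This reduces the whole problem to a finite linear system.

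For the existence part, my approach is to define
\[
  \dim_k(M) := \tfrac{1}{k}\, l_R\bigl(R/(c^k) \otimes_R M\bigr), \qquad 1 \le k \le n,
\]
where $l_R$ denotes composition length as a left $R$-module, which is finite because $R$ is left artinian and $M$ is finitely generated. Axioms (SMod1) and (SMod2) follow at once from $l_R(R/(c^k)) = k$ and from the compatibility of $\otimes_R$ with direct sums. For (SMod3), given an exact sequence $M_1 \to M_2 \to M_3 \to 0$, I combine right-exactness of $R/(c^k) \otimes_R -$ with subadditivity of length on short exact sequences to produce both required inequalities. To match the prescribed values I use the standard identity $R/(c^i) \otimes_R R/(c^k) \cong R/((c^i) + (c^k)) = R/(c^{\min(i,k)})$, which is valid because $c$ is central and so the two ideals are comparable; this gives $\dim_k(R/(c^i)) = \min(i,k)/k$ exactly as prescribed.

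For the combinatorial/uniqueness part, given any $\rk \in \mathbb{P}(R)$, I set $b_i := \rk(c^i) - \rk(c^{i+1})$ for $0 \le i \le n-1$ and look for non-negative $c_1, \ldots, c_n$ with $\sum c_k = 1$ such that $\rk = \sum_k c_k \rk_k$. Evaluation at each $c^i$, together with $\rk_k(c^i) = \max\{(k-i)/k, 0\}$, reduces this to the triangular system $b_i = \sum_{j > i} c_j/j$, which admits the unique solution $c_k = k(b_{k-1} - b_k)$ for $1 \le k \le n-1$ and $c_n = n\, b_{n-1}$. Non-negativity is exactly \cref{lemm:positive_coefficients} applied to the nilpotent central element $c$, and $\sum_k c_k$ telescopes to $b_0 + b_1 + \cdots + b_{n-1} = \rk(1) - \rk(c^n) = 1$. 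Since the $c_k$ are uniquely determined, every rank function is a unique convex combination of $\rk_1, \ldots, \rk_n$, and these must therefore be the extreme points of $\mathbb{P}(R)$.

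The main obstacle I foresee is the verification of (SMod3) for the proposed $\dim_k$: since $R/(c^k) \otimes_R -$ is only right exact, I will have to argue via the image of $R/(c^k) \otimes_R M_1 \to R/(c^k) \otimes_R M_2$ and the surjectivity of $R/(c^k) \otimes_R M_2 \to R/(c^k) \otimes_R M_3$, combined with length subadditivity on composition factors. Beyond this, the rest of the argument is arithmetic bookkeeping that parallels \cref{prop:Kt_tn} essentially verbatim.
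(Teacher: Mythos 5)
Your proposal is correct and follows essentially the same route as the paper: the paper likewise defines $\dim_k(M) = \frac{1}{k}\,l\bigl(R/(c^k)\otimes_R M\bigr)$, verifies (SMod1)--(SMod3) via right exactness of the tensor product and additivity of length, and then reduces the identification of the extreme points to the triangular system already solved in \cref{prop:Kt_tn} using \cref{lemm:positive_coefficients}. The only cosmetic difference is that you spell out the (SMod3) verification and the arithmetic explicitly where the paper defers to the earlier argument.
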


\begin{proof}

The previous expression for a finitely presented left $R$-module, together with (SMod2), shows that there exists only one way to extend $\dim_k$ for an arbitrary finitely presented module. More precisely, if we split the decomposition of $M$ as
$$
M = R^m \oplus \bigoplus_{\substack{m_i>k\\i=1,\dots, r_1}} R/(c^{m_i})  \oplus  \bigoplus_{\substack{n_j\leq k \\j=1,\dots, r_2}} R/ (c^{n_j}) 
$$
then $\dim_k(M) = m+r_1+\sum_j \frac{n_j}{k}$. Thus, observe that 
$$
 \dim_k(M) = \frac{l(R/(c^k)\otimes_{R} M)}{k}
$$
where $l(N)$ stands for the length of $N$. Since the tensor commutes with direct sums and it is right exact, and the length of a module is additive on short exact sequences, we deduce that $\dim_k$ satisfies (SMod1)-(SMod3). 

Since any finitely presented module can be written in the above form, we deduce that any Sylvester matrix rank function is determined by its values on $c,\dots,c^{n-1}$. Thus, noting that the associated matrix rank functions are the analogues to the extreme ranks on $K[t]/(t^n)$, the same argument of \cref{prop:Kt_tn} shows that these are the extreme points in $\mathbb P(R)$.
\end{proof}

\begin{coro} \label{coro:primary_art}
  Let $R$ be a left artinian primary ring, and assume that there exists an element $c\in Z(R)$ with order of nilpotency $n$ such that $J(R) = (c)$. Then any rank function on $R$ is determined by its values on $c^i$ for $1\le i \le n-1$, and the extreme points in $\mathbb P(R)$ are the Sylvester matrix rank functions $\rk_1,\dots, \rk_n$ defined by 
  $$
  \rk_k(c^i) = \begin{cases}
\frac{k-i}{k} & \textrm{if}~ i\leq k \\
0 & \textrm{otherwise}
\end{cases}
  $$
  Any other rank function can be uniquely expressed as a convex combination of $\rk_1,\dots, \rk_n$.
\end{coro}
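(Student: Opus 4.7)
The natural strategy is to reduce to the local case via Morita equivalence, so I would combine \cref{prop:Pierce}, \cref{prop:Morita}, and \cref{prop:local_art}. By \cref{prop:Pierce}, there is a unique integer $s$ and a (unique up to isomorphism) left artinian local ring $S$ with $R\cong \Mat_s(S)$. Through this isomorphism, I want to produce a central generator of $J(S)$ from the given $c\in Z(R)$.

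The first step is therefore to verify the standard facts $Z(\Mat_s(S)) = Z(S)\cdot I_s$ and $J(\Mat_s(S)) = \Mat_s(J(S))$. The hypothesis that $c\in Z(R)$ with $J(R) = (c)$ then forces $c = z\, I_s$ for some $z\in Z(S)$; moreover, expanding $(c) = \Mat_s(S)\cdot c\cdot \Mat_s(S)$ using elementary matrices, the two-sided ideal $(c)$ in $R$ corresponds to $\Mat_s((z))$, so $(z) = J(S)$. Since $c^k = z^k I_s$, the nilpotency order of $z$ equals that of $c$, i.e.~it is $n$. Thus $S$ satisfies exactly the hypotheses of \cref{prop:local_art}.

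Let $\rk_1^S,\dots,\rk_n^S$ denote the extreme Sylvester matrix rank functions on $S$ furnished by \cref{prop:local_art}, whose associated module rank functions yield $\rk_k^S(z^i) = (k-i)/k$ for $i\le k$ and $0$ otherwise. By \cref{prop:Morita}, the map $\iota^{\sharp}\colon \mathbb{P}(R)\to \mathbb{P}(S)$ induced by the diagonal embedding $\iota\colon S\to \Mat_s(S)=R$ is a convex-linear bijection preserving extreme points, so it pulls $\rk_k^S$ back to an extreme rank function $\rk_k\in \mathbb{P}(R)$. The desired formula for $\rk_k(c^i)$ then follows from $c^i = \iota(z^i)$ and the identity $\rk_k^S(z^i) = \iota^{\sharp}(\rk_k)(z^i) = \rk_k(c^i)$.

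For the determination claim, suppose two ranks $\rk,\rk'\in \mathbb{P}(R)$ agree on every $c^i$; then their images under $\iota^{\sharp}$ agree on every $z^i$, hence coincide on $\mathbb{P}(S)$ by \cref{prop:local_art}, and injectivity of $\iota^{\sharp}$ forces $\rk=\rk'$. The statement about unique convex decompositions transfers from $\mathbb{P}(S)$ to $\mathbb{P}(R)$ because $\iota^{\sharp}$ is convex-linear. I expect no serious obstacle: the only real content is the bookkeeping that the central generator of $J(R)$ descends to a central generator of $J(S)$ with the same nilpotency order, after which everything is formal from the three propositions cited.
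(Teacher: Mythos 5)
Your proposal is correct and follows essentially the same route as the paper: reduce to the local ring $S$ via \cref{prop:Pierce}, observe that centrality of $c$ forces its image to be a scalar matrix $\Diag_s(z,\dots,z)$ with $z\in Z(S)$ generating $J(S)$ and having the same nilpotency order, apply \cref{prop:local_art} to $S$, and transport the extreme points and the determination claim back through the convex-linear bijection of \cref{prop:Morita}. The paper's proof is the same argument with the normalization $\rk_k=\varphi^{\sharp}(\frac{1}{s}\rk'_k)$ made explicit.
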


\begin{proof}
  By \cref{prop:Pierce}, there exist a left artinian local ring $S$, a positive integer $s$ and a ring isomorphism $\varphi: R \rightarrow \Mat_s(S)$. Notice that, since $\varphi$ is an isomorphism, we have that  $\varphi(J(R)) = J(\Mat_s(S)) = \Mat_s(J(S))$ and it is generated by $\varphi(c)$.
  
  Now, $c$ is a central element, and hence  $\varphi(c) \in Z(\Mat_s(S))$, from where necessarily $\varphi(c) = \Diag_s(d,\dots,d)$ for some $d\in J(S)\cap Z(S)$ of order $n$. In addition, observe that $J(S) = (d)$.
  
  Thus, in terms of Sylvester matrix rank functions, \cref{prop:local_art} tells us that the extreme points on $\mathbb P(S)$ are the ranks $\rk'_1, \dots, \rk'_n$ given by 
  $$
   \rk'_k(d^i) = \begin{cases}
\frac{k-i}{k} & \textrm{if}~ i\leq k \\
0 & \textrm{otherwise}
\end{cases}
  $$
  and, by \cref{prop:Morita}, $\frac{1}{s}\rk'_1,\dots, \frac{1}{s}\rk'_n$ are the extreme points in $\mathbb P(\Mat_s(S))$. Therefore, since ring isomorphisms preserve the extreme rank functions, we obtain that $\rk_1 = \varphi^{\sharp}(\frac{1}{s}\rk'_1), \dots, \rk_n = \varphi^{\sharp}(\frac{1}{s}\rk'_n)$ are the extreme points in $\mathbb P(R)$, and taking into account that
  $$
  \rk_k(c^i) = \frac{1}{s}\rk'_k(\Diag_s(d^i, \dots, d^i)) = \rk'_k(d^i),
  $$
  $\rk_k(c^i)$ is defined as in the statement. 
  
  To finish, we need to check that rank functions on $R$ are determined by their values on $c,\dots, c^{n-1}$. Given the bijectivity of the maps $\mathbb P(R)\to \mathbb P(\Mat_s(S)) \to \mathbb P(S)$, if two rank functions on $R$ coincide on powers of $c$, their images are rank functions on $S$ that coincide on powers of $d$, and hence they are equal by \cref{prop:local_art}. This finishes the proof.
\end{proof}

\section{Sylvester rank functions on a Dedekind domain} \label{sect:Dedekind}

This section is devoted to describing the space of rank functions $\mathbb P(\OC)$ defined on a Dedekind domain $\OC$ which is not a field, since the latter case has already been treated through \cref{sect:Sylvester}.

Recall that over a Dedekind domain every non-zero prime ideal is maximal
and every non-zero proper ideal can be represented uniquely as a finite product of powers of distinct prime ideals (cf. \cite{BK2000}*{Lemma~5.1.18 \& Theorem~5.1.19}). Moreover, every finitely generated $\OC$-module $M$ can be expressed as follows
$$
  M\cong \OC^n \oplus I \oplus \left(\bigoplus_{j=1}^m \OC/\mf_j^{\alpha_j}\right),
$$
where $I$ is an ideal of $\OC$ and the $\mf_j$ are (non-necessarily distinct) maximal ideals (cf. \cite{BK2000}*{Theorem~6.3.23}, where the uniqueness of such decomposition is also discussed).

The following lemma collects some other basic properties of ideals over Dedekind domains that we shall need for our purposes.

\begin{lemm}\label{lemm:Dedekind_properties}
Let $I$ be any non-zero ideal over the Dedekind domain $\OC$. Then, the following hold.
\begin{enumerate} 
 \item The quotient $\OC/I$ is an artinian principal ideal ring. 
 
 \item $I$ is projective and strongly two-generated, i.e., for every non-zero $x\in I$, there exists $y\in I$ such that $I = \OC x+ \OC y$.
 
 \item For every non-zero ideal $J$, there exists an $\OC$-isomorphism $I \oplus J \cong \OC \oplus IJ$.
\end{enumerate}
\end{lemm}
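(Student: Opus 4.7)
The plan is to exploit the unique factorization of non-zero ideals throughout: items (1) and (2) reduce to standard facts about discrete valuation rings via the Chinese Remainder Theorem, while (3) follows the classical Steinitz pattern of replacing $J$ by a coprime representative of its isomorphism class.

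For (1), factor $I = \mf_1^{e_1}\cdots\mf_k^{e_k}$; the pairwise coprimality of the $\mf_i^{e_i}$ and the CRT give $\OC/I \cong \prod_{i=1}^k \OC/\mf_i^{e_i}$. Each factor equals $\OC_{\mf_i}/\mf_i^{e_i}\OC_{\mf_i}$, the quotient of a discrete valuation ring by a power of its maximal ideal, which is a local artinian principal ideal ring; a finite product of artinian principal ideal rings is again one, so (1) follows. For (2), invertibility (hence projectivity) of every non-zero ideal over a Dedekind domain is classical. For strong two-generation, given $0 \neq x \in I$, part (1) ensures $\OC/(x)$ is a principal ideal ring, so the image of $I$ in $\OC/(x)$ is principal, generated by some $\bar y$ with $y\in I$, and lifting gives $I = \OC x + \OC y$.

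For (3), the key step is to produce $J' \subseteq \OC$ with $J' \cong J$ as $\OC$-modules and $I + J' = \OC$. Once this is done, the short exact sequence
$$
0 \to I \cap J' \to I \oplus J' \xrightarrow{(a,b)\mapsto a+b} I + J' \to 0
$$
reduces, using $I+J'=\OC$ (which forces $I \cap J' = IJ'$), to $0 \to IJ' \to I \oplus J' \to \OC \to 0$. Since $\OC$ is free, this splits, yielding $I \oplus J \cong I \oplus J' \cong \OC \oplus IJ' \cong \OC \oplus IJ$, where the last isomorphism uses $IJ' = \alpha IJ$. To construct $J'$, let $\mf_1,\dots,\mf_k$ be the primes dividing $I$ and work inside the fractional ideal $J^{-1}$: the submodules $\mf_i J^{-1}$ are pairwise coprime in $J^{-1}$, so CRT produces an element $\alpha \in J^{-1}$ whose image in $J^{-1}/\mf_i J^{-1} \cong \OC/\mf_i$ is non-zero for every $i$. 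Then $v_{\mf_i}(\alpha) = -v_{\mf_i}(J)$ for each $i$, while $v_{\mf}(\alpha) \geq -v_{\mf}(J)$ for every other prime, so $J' := \alpha J \subseteq \OC$ is an integral ideal coprime to $I$ and isomorphic to $J$ via multiplication by $\alpha$.

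The main obstacle I anticipate is precisely this coprime-representative step in (3); the rest of the argument is bookkeeping with factorization and the CRT. Parts (1) and (2) require little more than unpacking definitions once the Dedekind factorization is in hand.
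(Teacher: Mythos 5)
Your proof is correct, but it is worth noting that the paper does not actually prove this lemma: it simply cites \cite{BK2000}*{Proposition~5.1.22}, \cite{BK2000}*{Corollary~5.1.23 \& Lemma~6.1.1} and \cite{BK2000}*{Lemma~6.1.4} for the three items. What you supply is a self-contained argument along standard lines. Your (1) via the primary decomposition, the Chinese Remainder Theorem and the identification $\OC/\mf^e\cong\OC_{\mf}/\mf^e\OC_{\mf}$ is sound, and (2) correctly derives strong two-generation from (1) by noting $(x)\subseteq I$ and lifting a generator of the principal ideal $I/(x)$. For (3), the Steinitz-type reduction is also correct: once $J'\cong J$ with $I+J'=\OC$ is found, the split sequence $0\to IJ'\to I\oplus J'\to\OC\to 0$ (using $I\cap J'=(I\cap J')(I+J')\subseteq IJ'$) gives the isomorphism, and your construction of $\alpha\in J^{-1}$ avoiding each $\mf_iJ^{-1}$ does force $v_{\mf_i}(\alpha)=-v_{\mf_i}(J)$ at the primes dividing $I$ while keeping $\alpha J$ integral. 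The only point I would make explicit is the identification $J^{-1}/\mf_iJ^{-1}\cong\OC/\mf_i$, which rests on $J^{-1}$ being invertible, hence locally free of rank one; with that said, the argument is complete and, unlike the paper, makes the lemma self-contained at the cost of about a page of standard Dedekind-domain bookkeeping.
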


\begin{proof}
The statements correspond, respectively, to \cite{BK2000}*{Proposition~5.1.22}, \cite{BK2000}*{Corollary~5.1.23 \& Lemma~6.1.1} and \cite{BK2000}*{Lemma~6.1.4}.
\end{proof}

Observe from \cref{lemm:Dedekind_properties}(1) that for every maximal ideal $\mf$ and positive integer $n$, the quotient ring $\OC/\mf^n$ is a local artinian ring whose unique maximal ideal $J(\OC/\mf^n) = \mf/\mf^n$ is (nilpotent and) principal. Let $c\in \mf$ be such that $c+\mf^n$ generates $\mf/\mf^n$ and note that, since different powers of a proper ideal in $\OC$ are all distinct (for instance, because of the uniqueness of a primary decomposition; see also \cite{BK2000}*{Proposition~5.1.24}), $n$ is precisely the order of nilpotency of this element. Then, \cref{prop:local_art} tells us that there are exactly $n$ extreme Sylvester matrix rank functions $\rk_{\OC/\mf^n,1},\dots,\rk_{\OC/\mf^n,n}$ in $\mathbb P(\OC/\mf^n)$, which are determined by
  $$
  \rk_{\OC/\mf^n,k}(c^i+\mf^n) = \begin{cases}
\frac{k-i}{k} & \textrm{if}~ i\leq k \\
0 & \textrm{otherwise}
\end{cases}
  $$
Hence, we can define Sylvester matrix rank functions on $\OC$ through the ring homomorphisms $\pi_{\mf,n}: \OC\to \OC/\mf^n$. In particular, for any maximal ideal $\mf$ and positive integer $k$ we have a rank function $\rk_{\mf, k} = \pi_{\mf, k}^{\sharp}(\rk_{\OC/\mf^k,k})$. We are going to show that the associated Sylvester module rank functions $\dim_{\mf,k}$ satisfy, for every maximal ideal $\nf$ and positive integer $i$,
$$
\dim_{\mf,k}(\OC/\nf^i) = \begin{cases}
\frac{i}{k} & \textrm{if}~ \nf = \mf ~\textrm{and}~ i\leq k \\
1 & \textrm{if}~ \nf = \mf ~\textrm{and}~ i > k \\
0 & \textrm{if}~ \nf \neq \mf
\end{cases}
$$
Assume first that $\nf \neq \mf$, take any non-zero $x\in \nf^i\backslash \mf$ and let $y$ be as in \cref{lemm:Dedekind_properties}(2), so that $\nf^i = \OC x + \OC y$ . Then $\begin{pmatrix} x\\ y\end{pmatrix}$ is a presentation matrix for $O/\nf^i$, and therefore
$$
  \dim_{m,k}(\OC/\nf^i) = 1-\rk_{\OC/\mf^k,k}\begin{pmatrix} x+\mf^k\\ y+\mf^k\end{pmatrix} = 0,
$$
where the last equality follows because, since $x\notin \mf$, $x+\mf^k$ is invertible in $\OC/\mf^k$.

Now, for every $i< k$, let $x$ be a non-zero element of $\mf^k$, take $y$ such that $\mf^i = \OC x + \OC y$ and observe that necessarily $y\in \mf^i\backslash \mf^{i+1}$. Thus, if $c\in \mf$ is such that $c+\mf^k$ generates $\mf/\mf^k$, there exists an element $r\in \OC\backslash \mf$, hence a unit in $\OC/\mf^k$, such that $y+\mf^k = (r+\mf^k)(c^i+\mf^k)$, and as before,
$$
 \dim_{m,k}(\OC/\mf^i) = 1-\rk_{\OC/\mf^k,k}\begin{pmatrix} x+\mf^k\\ y+\mf^k\end{pmatrix} = 1-\rk_{\OC/\mf^k,k}\begin{pmatrix} 0\\ c^i+\mf^k\end{pmatrix} = \frac{i}{k}.
$$
Finally, if $i\ge k$, the generators of $\mf^i$ are zero in $\OC/\mf^k$, and $\dim_{m,k}(\OC/\mf^i) = 1$.\smallskip

As a remark here, observe that the local artinian ring $(R = \OC/\mf^n, J = \mf/\mf^n)$ is complete (i.e. the natural map $R\to \lim_i R/J^i$ is an isomorphism) and separated with respect to the $J$-adic topology (i.e., $\bigcap_{i} J^i = 0$), since $J$ is nilpotent. Thus, if we further assume that $\OC$ contains a field $k$, then $k\hookrightarrow \OC \to R$ is injective, and hence Cohen's theorem on local rings (cf. \cite{Matsumura1980}*{Theorem~60 \& the proof of the subsequent Corollary~1}) tells us that there exists a subfield $K$ of $R$, with $K \cong R/J \cong \OC/\mf$, and a surjective ring homomorphism $K[[t]]\to R$ where $t$ maps to the generator of $J$. In particular, $t^n$ is the first power of $t$ in the kernel of the map, and since $K[[t]]$ is a discrete valuation ring, this shows that the kernel must be precisely $(t^n)$. Hence, we have an isomorphism $K[t]/(t^n)\cong R$, meaning in particular that in this case all rank functions on $\OC/\mf^n$ are regular by \cref{prop:Kt_tn}.

We can also define the regular rank function $\rk_0$ on $\OC$ induced by the field of fractions $\mathcal Q(\OC)$. The same arguments show that the associated module rank function $\dim_0$ satisfies $\dim_0(\OC/\mathfrak \nf^i) = 0$ for every maximal ideal $\nf$ and positive integer $i$.

The structure of finitely generated modules over $\OC$ and the next proposition show that, in general, any Sylvester module rank function $\dim$ on $\OC$ is determined by its values on the modules considered above. 

\begin{prop}\label{prop:rank_of_ideals}
If $\dim$ is a Sylvester module rank function over $\OC$, then $\dim(I) = 1$ for every non-zero ideal $I$ of $\mathcal O$.
\end{prop}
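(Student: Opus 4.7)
The plan is to exploit the product identity from \cref{lemm:Dedekind_properties}(3), which propagates additively under $\dim$, and then bound the result using the fact (\cref{lemm:Dedekind_properties}(2)) that powers of $I$ remain two-generated.

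First I would combine $I\oplus J \cong \OC \oplus IJ$ for any two non-zero ideals $I,J$ with (SMod1) and (SMod2) to deduce the identity
$$
\dim(I) + \dim(J) \;=\; \dim(\OC) + \dim(IJ) \;=\; 1 + \dim(IJ),
$$
that is, $\dim(IJ) = \dim(I)+\dim(J)-1$. Taking $J=I^{n}$ and iterating (or inducting on $n$) then yields the closed form
$$
\dim(I^{n}) \;=\; n\bigl(\dim(I)-1\bigr) + 1 \qquad \text{for every } n\ge 1.
$$

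Next I would produce the two opposing bounds that pin $\dim(I)$ down to $1$. On one hand, by \cref{lemm:Dedekind_properties}(2) the ideal $I^{n}$ is (strongly) two-generated, so there is a surjection $\OC^{2} \twoheadrightarrow I^{n}$; applying (SMod3) and (SMod1)+(SMod2) gives $\dim(I^{n}) \le \dim(\OC^{2}) = 2$. On the other hand, $\dim$ takes non-negative values, so $\dim(I^{n}) \ge 0$. Substituting the closed form, we obtain
$$
0 \;\le\; n\bigl(\dim(I)-1\bigr) + 1 \;\le\; 2 \qquad \text{for all } n\ge 1,
$$
that is, $-\tfrac{1}{n} \le \dim(I)-1 \le \tfrac{1}{n}$, and letting $n\to\infty$ forces $\dim(I)=1$.

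There is no real obstacle: the whole argument is essentially a two-line calculation once the right tools are identified. The only subtlety worth noting is the qualitative shift from working with a single ideal (where no obvious direct bound is available) to working with the sequence $\{I^{n}\}_{n\ge 1}$, where the strong two-generation property provides a uniform upper bound on $\dim(I^{n})$ independent of $n$, while the multiplicative identity forces $\dim(I^{n})$ to grow (or decay) linearly unless $\dim(I)=1$.
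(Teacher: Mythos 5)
Your proof is correct. The first half is identical to the paper's: both iterate \cref{lemm:Dedekind_properties}(3) to get the linear relation $\dim(I^{n}) = n(\dim(I)-1)+1$ (the paper phrases it as $\dim(I) = \tfrac{k-1}{k} + \tfrac{\dim(I^{k})}{k}$), and both use non-negativity of $\dim$ to conclude $\dim(I)\ge 1$. Where you diverge is the upper bound. The paper uses \cref{lemm:Dedekind_properties}(2) together with projectivity to realize $I$ as a direct summand of $\OC^{2}$ with a necessarily non-zero complement $C$; it then applies the lower-bound argument to $C$ as well and concludes from $\dim(I)+\dim(C)=2$ that both summands have dimension exactly $1$. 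You instead apply two-generation to every power $I^{n}$, getting the uniform bound $\dim(I^{n})\le 2$ from a surjection $\OC^{2}\twoheadrightarrow I^{n}$ and (SMod3), which is incompatible with linear growth unless $\dim(I)=1$. Your route is slightly more self-contained: it needs neither projectivity nor the observation that the complement of $I$ in $\OC^{2}$ is non-zero (which in the paper leans again on the structure theorem for finitely generated $\OC$-modules). The paper's route, in exchange, extracts both bounds from a single splitting and never needs to consider the whole sequence $\{I^{n}\}$ for the upper estimate. Both are valid; the choice is a matter of taste.
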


\begin{proof}
Let $I$ be a non-zero ideal of $\mathcal O$. By using  \cref{lemm:Dedekind_properties}(3) repeatedly, we can see that for every positive integer $k$ we have $\bigoplus_{i=1}^k I \cong \OC^{k-1}\oplus I^k$, from where 
$$
\dim(I) = \frac{k-1}{k} + \frac{\dim(I^k)}{k} \ge \frac{k-1}{k}.
$$
Thus, necessarily $\dim(I)\ge 1$. On the other hand, $I$ is projective and two-generated by \cref{lemm:Dedekind_properties}(2), and hence a direct summand of $\mathcal \OC^2$. Since an ideal of $\OC$ cannot be free of rank $2$, its complement $C$ must be non-zero, and the structure of finitely generated modules over $\OC$ together with the previous argument shows that $\dim(C)\ge 1$. Since $\dim(I)+\dim(C) = \dim(\OC^2) = 2$, necessarily $\dim(I)= 1$.  
\end{proof}

We are going to show that the previous ranks $\dim_{\mf,k}$ and $\dim_0$ are the extreme points in $\mathbb{P}(\mathcal O)$ by proving that any rank function $\dim$ on $\mathcal O$ can be uniquely written as
$$
 \dim = c_0\dim_0 + \sum_{\mf}\sum_{k\in \ZZ^+} c_{\mf,k} \dim_{\mf,k}, ~~~c_0, c_{\mf,k}\geq 0, ~c_0+\sum c_{\mf,k} = 1,
$$
where $\mf$ runs through all maximal ideals of $\OC$ and $k$ through the positive integers. As there can be an uncountable number of maximal ideals in $\OC$, for the right hand side sum to make sense we need to show first that there are only countably many non-zero coefficients. For this purpose, note that if such an expression is to hold, then from the definition of $\dim_{\mf, k}$ we obtain that
$$
\dim(\OC/\mf) = \sum_{k\ge 1} \frac{c_{\mf,k}}{k}.
$$
In particular, if $\dim(\OC/\mf) = 0$, then $c_{\mf, k} = 0$ for every $k$. Thus, for our goal it suffices to see that $\dim(\OC/\mf) = 0$ for all but countably many maximal ideals.

Notice also that the previous equality implies that if $\dim(\OC/\mf) = 0$, then $\dim(\OC/\mf^k) = 0$ for every $k$, a statement that will follow from \cref{lemm:positive_coefficients_Dedekind} in our case but may not be true in general for a commutative ring $R$ (From (SMod3) and the surjective homomorphism $R/\mf^n\to R/\mf$ we only deduce $\dim(R/\mf^n) \ge \dim(R/\mf)$). However, we show in the following lemma that the number of pairs $(\mf, k)$ such that $\dim(R/\mf^k) > 0$ is still countable in this more general setting.

\begin{lemm} \label{lemm:countably_many_maximals_comm}
  Let $\dim$ be a Sylvester module rank function on a commutative ring $R$. Then, there exist only countably many maximal ideals $\mathfrak{m}$ such that $\dim(R/\mathfrak{m}^k)>0$ for some $k\ge 1$.
\end{lemm}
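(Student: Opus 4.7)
The plan is to exploit the Chinese Remainder Theorem together with additivity of Sylvester module rank functions on direct sums, using the fact that $\dim R = 1$ gives a uniform bound on sums of ranks of cyclic quotients of $R$.

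First, I would observe that if $\mathfrak{m}_1,\dots,\mathfrak{m}_r$ are distinct maximal ideals of $R$ and $k_1,\dots,k_r$ are positive integers, then the powers $\mathfrak{m}_i^{k_i}$ are pairwise comaximal (from $\mathfrak{m}_i+\mathfrak{m}_j=R$ one raises $1=a+b$ with $a\in\mathfrak{m}_i,\,b\in\mathfrak{m}_j$ to a high enough power to conclude $\mathfrak{m}_i^{k_i}+\mathfrak{m}_j^{k_j}=R$). By CRT this yields an $R$-module isomorphism
$$R\bigl/\textstyle\prod_i\mathfrak{m}_i^{k_i}\;\cong\;\bigoplus_{i=1}^r R/\mathfrak{m}_i^{k_i}.$$
Applying (SMod2) to the right-hand side and the surjectivity half of (SMod3) to the quotient map $R\twoheadrightarrow R/\prod_i \mathfrak{m}_i^{k_i}$ gives the key inequality
$$\sum_{i=1}^r \dim(R/\mathfrak{m}_i^{k_i}) \;=\; \dim\bigl(R/\textstyle\prod_i\mathfrak{m}_i^{k_i}\bigr)\;\le\;\dim R\;=\;1.$$

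The countability statement then follows by a standard pigeonhole argument. For each integer $n\ge 1$ let
$$S_n \;=\; \bigl\{\mathfrak{m}\text{ maximal}:\dim(R/\mathfrak{m}^k)>1/n\text{ for some }k\ge 1\bigr\}.$$
If $\mathfrak{m}_1,\dots,\mathfrak{m}_r$ were $r$ distinct elements of $S_n$ with witnessing exponents $k_1,\dots,k_r$, the previous bound would give $r/n<1$, so $|S_n|<n$. Since the set of maximal ideals $\mathfrak{m}$ with $\dim(R/\mathfrak{m}^k)>0$ for some $k$ equals $\bigcup_{n\ge 1}S_n$, it is a countable union of finite sets and hence countable.

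There is no real obstacle here, only a bookkeeping point: (SMod2) and (SMod3) are defined on finitely presented modules, so one should check that the modules appearing in the argument qualify. But the hypothesis that $\dim(R/\mathfrak{m}^k)$ makes sense already entails that each $\mathfrak{m}^k$ under consideration is finitely generated, and a finite product of finitely generated ideals is finitely generated, so $R/\prod_i\mathfrak{m}_i^{k_i}$ is finitely presented and the argument goes through without change.
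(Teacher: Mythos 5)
Your proof is correct and takes essentially the same route as the paper's: the Chinese Remainder Theorem together with (SMod2) and (SMod3) yields the bound $\sum_i\dim(R/\mathfrak{m}_i^{k_i})\le \dim R=1$, and a pigeonhole argument shows each $S_n$ is finite, so their countable union is countable. The only (cosmetic) difference is that you allow varying exponents $k_i$ in a single CRT application and thereby fold the quantifier over $k$ into the definition of $S_n$, whereas the paper fixes $k$ first and takes a double union $\bigcup_k\bigcup_n S_n^{(k)}$.
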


\begin{proof}
Fix $k\ge 1$ and, for every $n\ge 1$, let $S_n^{(k)}$ be the collection of all maximal ideals of $R$ with $\dim(R/\mf^k)>1/n$. Suppose that there exists $n$ such that $S_n^{(k)}$ is infinite, and take $m>n$ different maximal ideals $\{\mf_i\}_{i=1}^m$ on $S_n^{(k)}$. Then, using the Chinese Remainder Theorem and (SMod2),
$$
\dim(R/\mathfrak{m}_1^k\cdots\mathfrak{m}_m^k) = \sum_{i = 1}^m \dim (R/\mathfrak{m}_i^k) > \frac{m}{n} > 1.
$$ 
This is a contradiction, since for every ideal $I$ in $R$, we have $\dim(R/I)\leq \dim(R)=1$ by (SMod1) and (SMod3), so $S_n^{(k)}$ must be finite for every $n$. Therefore, the set $S_0 = \bigcup_{k\in \ZZ^+} \bigcup_{n\in \mathbb{Z}^+} S_n^{(k)}$ is countable.
\end{proof}

Since the computation of coefficients is going to be very similar to that of Proposition \ref{prop:Kt_tn}, we need also the following generalization of \cref{lemm:positive_coefficients}.

\begin{lemm} \label{lemm:positive_coefficients_Dedekind}
Let $\dim$ be a Sylvester module rank function on $\OC$ and let $\mf$ be a maximal ideal. If we set $b_{\mf, 0} = \dim(\OC/\mf)$ and $b_{\mf,k} = \dim(\OC/\mf^{k+1})-\dim(\OC/\mf^{k})$ for $k\ge 1$, then $b_{\mf,k} \geq b_{\mf,k+1}$ for every $k\ge 0$.
\end{lemm}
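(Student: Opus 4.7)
The plan is to reduce the statement to the local artinian quotient $R_N := \OC/\mf^N$ for $N$ sufficiently large, and then invoke \cref{lemm:positive_coefficients} applied to a central generator of $J(R_N)$.

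First, dispose of the trivial case $\dim(\OC/\mf)=0$: iterating (SMod3) along the short exact sequences $0\to\mf^i/\mf^{i+1}\to\OC/\mf^{i+1}\to\OC/\mf^i\to 0$, and using that $\mf^i/\mf^{i+1}$ is a quotient of $\OC/\mf$ (since $\mf^i/\mf^{i+1}$ is $1$-dimensional over $\OC/\mf$ because $\mf$ is locally principal), one obtains $\dim(\OC/\mf^k)=0$ for every $k$, so all $b_{\mf,k}$ vanish. I therefore assume $\dim(\OC/\mf)>0$; the surjection $\OC/\mf^N\twoheadrightarrow\OC/\mf$ and (SMod3) then force $\dim(R_N)\ge\dim(\OC/\mf)>0$ for every $N\ge 1$.

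Now fix $k\ge 0$ and set $N=k+2$. Pick any $c\in\mf\setminus\mf^2$. Its image $\bar c\in R_N$ is central (as $\OC$ is commutative) and generates $J(R_N)=\mf/\mf^N$, since $c+\mf^2$ generates the one-dimensional $\OC/\mf$-vector space $\mf/\mf^2$ and $\mf/\mf^N$ is nilpotent. Moreover, comparing valuations at every prime of $\OC$ shows that $\OC c^i+\mf^N=\mf^i$ for every $0\le i\le N$ (at $\mf$ both sides have valuation $i$, while at any prime $\pf\ne\mf$ one has $\mf^N\OC_\pf=\OC_\pf$), and therefore $R_N/(\bar c^i)\cong\OC/\mf^i$. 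Since $\OC$ is noetherian, finitely generated $R_N$-modules coincide with finitely generated $\OC$-modules annihilated by $\mf^N$, so the normalization $\dim_N(M):=\dim(M)/\dim(R_N)$ is well defined on this category; it inherits (SMod2) and (SMod3) from $\dim$ and satisfies (SMod1) by construction, hence defines a Sylvester module rank function on $R_N$. Let $\rk_N$ denote its associated matrix rank function.

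Applying \cref{lemm:positive_coefficients} to $\bar c\in R_N$ with respect to $\rk_N$ yields $\tilde b_i\ge\tilde b_{i+1}$ for every $i\ge 0$, where $\tilde b_i=\rk_N(\bar c^i)-\rk_N(\bar c^{i+1})$. Using $\rk_N(\bar c^i)=1-\dim_N(R_N/(\bar c^i))=1-\dim(\OC/\mf^i)/\dim(R_N)$ together with the identification $R_N/(\bar c^i)\cong\OC/\mf^i$, a direct computation gives $\tilde b_i=b_{\mf,i}/\dim(R_N)$ for every $0\le i\le N-1$ (at the endpoint $i=N-1$ one uses $\rk_N(\bar c^N)=\rk_N(0)=0$ and $\dim_N(R_N)=1$). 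Taking $i=k$ and multiplying through by the positive scalar $\dim(R_N)$ delivers $b_{\mf,k}\ge b_{\mf,k+1}$. The main obstacle is the technical bookkeeping to set up the identification $R_N/(\bar c^i)\cong\OC/\mf^i$ and to confirm that $\dim_N$ is a bona fide Sylvester rank function on $R_N$; both hinge on the local structure of the Dedekind domain $\OC$ at $\mf$, but once installed, the inequality becomes an immediate consequence of \cref{lemm:positive_coefficients}.
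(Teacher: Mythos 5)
Your argument is correct, but it takes a genuinely different route from the paper. The paper never leaves $\OC$: using the strong two-generation property of ideals (\cref{lemm:Dedekind_properties}(2)) it picks a non-zero $x\in\mf^2$ and a $y\in\mf\setminus\mf^2$ with $\mf=\OC x+\OC y$, writes down the explicit exact sequences
$$
\OC/\mf^{k+1}\xrightarrow{\varphi_k}\OC/\mf^{k+2}\oplus\OC/\mf^{k}\xrightarrow{\psi_k}\OC/\mf^{k+1}\to 0,
$$
and reads off $\dim(\OC/\mf^{k+2})+\dim(\OC/\mf^{k})\le 2\dim(\OC/\mf^{k+1})$ directly from (SMod2) and (SMod3) --- no case distinction and no normalization are needed. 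You instead transport the problem into the local artinian quotient $R_N=\OC/\mf^{N}$, where $\mf$ becomes principal and central, and invoke \cref{lemm:positive_coefficients}; the price is the normalization $\dim_N=\dim/\dim(R_N)$ (hence the separate treatment of the degenerate case $\dim(\OC/\mf)=0$, which you handle correctly via the simple quotients $\mf^i/\mf^{i+1}$) and the valuation computation $\OC c^i+\mf^N=\mf^i$, which is what lets you identify $R_N/(\bar c^{\,i})$ with $\OC/\mf^i$ and hence $\tilde b_i$ with $b_{\mf,i}/\dim(R_N)$. Both proofs ultimately rest on the same concavity inequality $\rk(a^{n})+\rk(a^{n+2})\ge 2\rk(a^{n+1})$; yours makes the link with the local artinian analysis of \cref{sect:leftartinianprimary} explicit and reuses \cref{lemm:positive_coefficients} verbatim, while the paper's is shorter, stays at the level of $\OC$-modules, and avoids having to check that the normalized function is again a Sylvester module rank function. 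All the steps you flag as ``technical bookkeeping'' do go through as you describe, so the proposal stands as a complete alternative proof.
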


\begin{proof}
For any non-zero $x\in \mf^2$, we can find a non-zero element $y\in \mf\backslash\mf^2$ such that $\mf = \OC x + \OC y$ by \cref{lemm:Dedekind_properties}(2). Observe that we can write then $\mf^{k+1} = \mf x^{k} + \mf^{k-1} y^2$ for every $k\ge 1$ (with $\mf^0 = \OC$). One can check that the sequences
$$
\OC/\mf \xrightarrow{\varphi_0} \OC/\mf^2 \xrightarrow{\psi_0} \OC/\mf \to 0,
$$
where $\varphi_0(r+\mf) = yr+\mf^2$ and $\psi_0(s+\mf^2)=s+\mf$, and 
$$
\OC/\mf^{k+1} \xrightarrow{\varphi_k} \OC/\mf^{k+2} \oplus  \OC/\mf^k \xrightarrow{\psi_k} \OC/\mf^{k+1} \to 0,  
$$
where $\varphi_k(r+\mf^{k+1})= (yr+\mf^{k+2}, r+\mf^k)$ and $\psi_k(s+\mf^{k+2}, t+\mf^{k})= s-yt + \mf^{k+1}$, are all exact. In fact, every $\varphi_k$ for $k\ge 0$ can be shown to be injective, but this is not needed for the proof since the result already follows from (SMod2) and (SMod3).
\end{proof}
We are now ready to prove the main result of this section about $\mathbb P(\mathcal O)$.

\begin{theo} \label{theo:Dedekind}
The Sylvester module rank functions $\dim_0$ and $\dim_{\mf, k}$, for every maximal ideal $\mf$ and $k\ge 1$, are the extreme points of $\mathbb{P}(\OC)$, and any other rank function can be uniquely expressed as a (possibly infinite) convex combination of them. In particular, if $\OC$ contains a field, $\mathbb{P}(\OC) = \mathbb{P}_{\reg}(\OC)$.
\end{theo}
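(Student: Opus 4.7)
The plan is to mimic the strategy of Proposition~\ref{prop:Kt_tn}/\ref{prop:local_art} but allow infinite combinations. First, the structure theorem for finitely generated $\OC$-modules together with Proposition~\ref{prop:rank_of_ideals} shows that any $\dim \in \PP(\OC)$ is determined by its values on the modules $\OC/\mf^i$ (since $\dim(\OC) = \dim(I) = 1$ for every non-zero ideal $I$). Hence it is enough to build a convex combination that matches $\dim$ on these modules and then extend by additivity.

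For each maximal ideal $\mf$, set $A_{\mf,i} = \dim(\OC/\mf^i)$ (with $A_{\mf,0} = 0$) and $b_{\mf,k} = A_{\mf,k+1} - A_{\mf,k}$; by \cref{lemm:positive_coefficients_Dedekind} the sequence $(b_{\mf,k})_{k\ge 0}$ is non-increasing and non-negative. Mirroring \cref{prop:Kt_tn}, I define
\[
    c_{\mf,k} \;=\; k\,(b_{\mf,k-1} - b_{\mf,k}) \;\ge\; 0, \qquad k\ge 1.
\]
A summation-by-parts computation gives $\sum_{k=1}^N c_{\mf,k} = A_{\mf,N} - N b_{\mf,N}$. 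Because $(b_{\mf,k})$ is non-increasing and the partial sums $\sum_k b_{\mf,k} = A_{\mf,N} \le 1$ are bounded, one has $N b_{\mf,N}\to 0$ (otherwise monotonicity would force the series to diverge). Therefore $\sum_{k\ge 1} c_{\mf,k} = L_\mf := \lim_i A_{\mf,i}$.

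To globalize, I use the Chinese Remainder Theorem: for any finite family of distinct maximals $\mf_1,\dots,\mf_r$ and any $i\ge 1$, the isomorphism $\OC/\prod_j\mf_j^i \cong \bigoplus_j \OC/\mf_j^i$ combined with (SMod1)--(SMod3) gives $\sum_j A_{\mf_j,i}\le 1$. Letting $i\to\infty$ and taking supremum over finite collections yields $\sum_\mf L_\mf\le 1$. Together with \cref{lemm:countably_many_maximals_comm}, this ensures only countably many coefficients $c_{\mf,k}$ are non-zero and that the definition $c_0 := 1 - \sum_{\mf,k} c_{\mf,k}\ge 0$ makes sense. Since for fixed $M$ the quantities $\dim_{\mf,k}(M)$ are uniformly bounded in $(\mf,k)$ (by any generator count of $M$), the series $c_0\dim_0 + \sum_{\mf,k} c_{\mf,k}\dim_{\mf,k}$ converges pointwise to a well-defined element of $\PP(\OC)$.

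It remains to check the equality. A direct computation using the explicit formula for $\dim_{\mf,k}(\OC/\nf^i)$ (splitting the sum over $l\ge i$ and $l<i$ and applying the Abel identity above) shows that the combination coincides with $\dim$ on every $\OC/\mf^i$; since both functions equal $1$ on $\OC$ and on every non-zero ideal, they agree on every finitely generated module via the structure theorem. Uniqueness is immediate because the coefficients are determined by $\dim$ through the explicit formula $c_{\mf,k} = k(b_{\mf,k-1}-b_{\mf,k})$, and then extremality of $\dim_0$ and each $\dim_{\mf,k}$ follows formally (a non-trivial convex representation would contradict uniqueness). For the last statement, when $\OC$ contains a field the remark preceding \cref{prop:rank_of_ideals} gives $\OC/\mf^k\cong K[t]/(t^k)$, whence \cref{prop:Kt_tn} makes every $\dim_{\mf,k}$ regular; together with the regularity of $\dim_0$ and the fact that $\PP_{\reg}(\OC)$ is a closed convex subset of $\PP(\OC)$, every $\dim\in\PP(\OC)$ is regular. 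The main technical obstacle is the global estimate $\sum_\mf L_\mf\le 1$ and the delicate convergence issue it entails, both resolved by the CRT argument together with the uniform bound on $\dim_{\mf,k}(M)$.
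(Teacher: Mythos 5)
Your proposal is correct and follows essentially the same route as the paper: reduction to the modules $\OC/\mf^i$ via the structure theorem and \cref{prop:rank_of_ideals}, the coefficient formula $c_{\mf,k}=k(b_{\mf,k-1}-b_{\mf,k})$ with non-negativity from \cref{lemm:positive_coefficients_Dedekind}, the Abel--Pringsheim step $Nb_{\mf,N}\to 0$, the Chinese Remainder Theorem bound $\sum_\mf L_\mf\le 1$, countability from \cref{lemm:countably_many_maximals_comm}, and regularity via $\OC/\mf^k\cong K[t]/(t^k)$. No gaps worth noting.
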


\begin{proof}
Let $\dim$ be any Sylvester module rank function on $\OC$. By  \cref{lemm:countably_many_maximals_comm}, the set $S_0$ of all maximal ideals $\mf$ such that $\dim(\OC/\mf^k)>0$ for some $k\ge 1$ is countable, and as we already discussed, only coefficients $c_{\mf, k}$ corresponding to $\mf\in S_0$ can be non-zero. Thus, we are going to show that there are unique non-negative numbers $c_0, c_{\mf,k}$ summing up to 1 such that
$$
\dim = c_0\dim_0 + \sum_{\mf\in S_0} \sum_{k\in \mathbb{Z^+}} c_{\mf,k} \dim_{\mf,k}.
$$
As we deduced from the structure of finitely generated modules over $\OC$ and \cref{prop:rank_of_ideals}, it suffices to have equality for every module $\OC/\nf^i$, where $\nf$ is a maximal ideal and $i\ge 1$. From the definition of $\dim_{\mf, k}$, its only contribution for these modules is given when $\mf = \nf$. In particular, if $\nf\notin S_0$, the two expressions coincide on $\OC/\nf^i$ for every $i$, and if $\nf\in S_0$, the coefficients $c_{\nf,k}$ should be determined by
$$
\dim(\OC/\nf^i) = \sum_{k=1}^{i-1}c_{\nf,k}  + \sum_{k=i}^\infty c_{\nf,k}\frac{i}{k}
$$  
Borrowing the notation of \cref{lemm:positive_coefficients_Dedekind}, we obtain that for every $i\ge 0$,
$$
b_{\nf,i} = \sum_{k = i+1}^\infty \frac{1}{k}c_{\nf,k}.
$$
Thus the only possible solution is given by the non-negative coefficients (\cref{lemm:positive_coefficients_Dedekind}) 
$$
 c_{\nf,k} = k(b_{\nf,k-1}-b_{\nf,k}), k\ge 1.
$$
We still need to show that $\sum_{\mf\in S_0} \sum_{k=1}^\infty c_{\mf,k}$ converges to a number $l$ less than or equal to 1, and take $c_0 = 1-l$. Notice first that $\sum_{k=0}^n b_{\mf,k} = \dim(\mathcal O/\mf^{n+1})$, and hence the sequence of partial sums $\{\sum_{k=0}^n b_{\mf,k}\}_n$ is monotonically increasing and bounded above by $1$, so the series $\sum_{k=0}^{\infty} b_{\mf,k}$ is convergent for every $\mf$. Moreover, since by (SMod3) and \cref{lemm:positive_coefficients_Dedekind} we have $b_{\mf,k}\ge b_{\mf,k+1}\ge 0$ for every $k\ge 0$, Abel-Pringsheim theorem (cf. \cite{Hardy2008}*{\S 179}) tells us that $\lim_{k\to \infty} kb_{\mf,k} = 0$. Therefore, from the inequalities
$$
 0\le \sum_{k=1}^{n} c_{\mf, k} = \sum_{k=1}^{n} k(b_{\mf,k-1}-b_{\mf,k}) = \left[\sum_{k = 0}^{n-1}b_{\mf, k}\right]-nb_{\mf,n} \le \sum_{k = 0}^{n-1}b_{\mf, k} \le 1,
$$
we obtain, on the one hand, that the sequence of partial sums $\{\sum_{k=1}^n c_{\mf,k}\}_n$ is also monotonically increasing and bounded above by $1$, and hence that the series $\sum_{k=1}^{\infty} c_{\mf, k}$ is also convergent. On the other hand, we also deduce from the previous discussion that $\sum_{k=1}^{\infty} c_{\mf, k} = \sum_{k=0}^{\infty} b_{\mf, k}$. We claim that $\sum_{\mf\in S_0} \sum_{k=0}^\infty b_{\mf,k}$ converges to a number smaller than 1, from where the result is established. Indeed,
$$
\sum_{\mf\in{S_0}}\sum_{k=0}^\infty b_{\mf,k} = \sup_{\substack{B\subset S_0 \\B~ \textrm{finite}}} \sum_{\mf\in{B}}\sum_{k=0}^\infty b_{\mf,k},
$$ 
and hence, it suffices to prove that for every finite $B\subset S_0$, $\sum_{\mf\in{B}}\sum_{k} b_{\mf,k}$ converges to a number below $1$. But this follows from the Chinese Remainder Theorem, (SMod2) and (SMod3), since
$$
\sum_{\mf\in{B}} \sum_{k=0}^n b_{\mf,k} = \sum_{\mf\in{B}} \dim(\mathcal O/\mf^{n+1}) = \dim(\mathcal O/\underset{\mf\in B}{\cap} \mf^{n+1}) \leq 1
$$
and therefore
$$
 \sum_{\mf\in B} \sum_{k=0}^{\infty} b_{\mf,k} = \sum_{\mf\in B}\left[\lim_{n\to \infty} \sum_{k=0}^{n} b_{\mf,k}\right] = \lim_{n\to \infty} \left[\sum_{\mf\in B}\sum_{k=0}^{n} b_{\mf,k} \right] \le 1,
$$
where the second equality follows because we are adding a finite number of finite limits.
The last assertion of the proposition is a consequence of the previous discussion regarding $\OC/\mf^n$ and $K[t]/(t^n)$, and that $\mathbb P_{\reg}(\OC)$ is closed and convex.
\end{proof}
\begin{rem*}
It may be needed to clarify why the right hand side expression in the previous theorem actually defines a Sylvester module rank function on $\OC$. 

In general, the identification between the spaces of Sylvester module and matrix rank functions given by the rules in \cref{prop:Malcolmson} is actually a convex-linear homeomorphism between them, reason why we use indistinctly $\PP(R)$ to denote both. Now, the space $\RR^{\Mat(R)}$ of real-valued functions on matrices over a ring $R$ is a locally convex, Haussdorf, topological vector space, and since $\PP(R)$ is a compact convex subset, countably infinite convex combinations of Sylvester rank functions are allowed and, moreover, convex-linear continuous maps between compact convex sets preserve these infinite convex combinations (cf. \cite{Goodearl1991}*{Proposition
~A.7} and the subsequent discussion).
\end{rem*}

\section{Krull dimension and simple left noetherian rings} \label{sect:simple_noetherian}

In this section we turn to the study of simple left noetherian rings. These rings appear naturally when dealing with skew Laurent polynomial rings since, for instance, for every automorphism of infinite inner order $\tau$ (see \cref{sect:Laurent}) of a division ring $\DC$, the ring $\DC[t^{\pm 1}; \tau]$ is simple and noetherian (\cite{GW2004}*{Corollary~1.15 \& Theorem~1.17}). Another widely studied subfamily here are the Weyl algebras $A_n(K)$ over a field of characteristic zero (\cite{GW2004}*{Exercise~2G \& Corollary~2.2}).

We show that on a simple left noetherian ring there exists a unique Sylvester rank function, namely, the one induced from its classical left quotient ring (i.e. the left ring of fractions with respect to the set of all non-zero divisors). This is proved by means of induction on Krull dimension of modules, and we follow the exposition in \cite{GW2004}*{Chapters~15 \& 16} to recall the necessary definitions and results. 

Let $R$ be a ring and let $M$ be a left $R$-module. We say that the \emph{Krull dimension} of $M$ is $-1$, and we write $\kdim(M) = -1$, if and only if $M$ is the zero module. Now, given an ordinal $\alpha\ge 0$, we write $\kdim(M)\leq \alpha$ if, for every descending chain $M_1\ge M_2\ge M_3\dots$ of submodules of $M$, we have $\kdim(M_i/M_{i+1})<\alpha$ for all but finitely many $i$. The Krull dimension of a non-zero module $M$ is then $\alpha$, denoted $\kdim(M) = \alpha$, if $\kdim(M)\leq \alpha$ and $\alpha$ is the least such ordinal, and we write $\lkdim(R)$ to denote the Krull dimension of $R$ as a left $R$-module.

In addition, if the module $M$ has $\kdim(M)=\alpha\ge 0$ and all its proper factor modules have Krull dimension $<\alpha$, i.e., $\kdim(M/N)<\alpha$ for every non-zero submodule $N$ of $M$, then $M$ is called \emph{$\alpha$-critical}.

Observe, for example, that a non-zero module $M$ has Krull dimension 0 if and only if it is artinian. Notice also that for every division ring $\DC$ and every automorphism $\tau$ of $\DC$, the skew Laurent polynomial ring $R = \DC[t^{\pm 1};\tau]$ is not left artinian, since we have the infinite descending chain of left ideals
$$
 R \supseteq R(1+t) \supseteq R(1+t)^2 \supseteq \dots,
$$
and its Krull dimension is at most $1$ (\cite{GW2004}*{Theorem~15.19 \& Exercise~15S}). Thus, $\lkdim(\DC[t^{\pm 1};\tau]) = 1$. 

In general, an ordinal $\alpha$ as in the definition may not exist, so there are modules for which the Krull dimension is not defined. However,  this is not the case of noetherian modules over a ring and, in particular, of finitely generated left modules over a left noetherian ring (\cite{GW2004}*{Lemma~15.3}).

The key point to prove the main result is the following lemma due to Stafford (cf. \cite{Stafford1976}*{Lemma~1.4}). He originally considered modules $M$ with finite Krull dimension, since it turns out to give an upper bound for the minimal number of generators of $M$, but 
the same result holds without this assumption. We add a proof here for the sake of completeness, just following the lines of \cite{GW2004}*{Theorem~16.7}. For this purpose, recall that an $R$-module $M$ is \emph{faithful} if $\ann_R(M) = 0$, \emph{fully faithful} if all its non-zero submodules are faithful, and \emph{completely faithful} if all its non-zero factor modules are fully faithful.   

\begin{lemm}\label{lemm:Stafford}
 Let $R$ be a left noetherian ring and $M$ a non-zero finitely generated completely faithful left $R$-module. If $\kdim(M) < \lkdim(R)$, then there exists a cyclic submodule $N$ of $M$ such that $\kdim(M/N)<\kdim(M)$. 
\end{lemm}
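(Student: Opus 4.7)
My plan is to prove the lemma by induction on the minimum number $n$ of generators of $M$. A key preliminary observation is that complete faithfulness is inherited by factor modules: any factor of a factor of $M$ is, up to canonical isomorphism, itself a factor of $M$, and therefore fully faithful by hypothesis on $M$.

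The base case $n = 1$ is immediate: $M$ is cyclic, so $N = M$ gives $\kdim(M/N) = -1 < \kdim(M)$. For the inductive step, let $M = Rm_1 + \cdots + Rm_n$ be a minimum generating expression with $n \geq 2$, and set $\alpha = \kdim(M)$. The factor $M/Rm_n$ is non-zero (by minimality), generated by $n - 1$ elements, completely faithful by the observation above, and satisfies $\kdim(M/Rm_n) \leq \alpha < \lkdim(R)$. If $\kdim(M/Rm_n) < \alpha$, take $N = Rm_n$ and we are done. Otherwise $\kdim(M/Rm_n) = \alpha$, and applying the induction hypothesis to $M/Rm_n$ yields a cyclic submodule $R\bar{x} \leq M/Rm_n$ with $\kdim((M/Rm_n)/R\bar{x}) < \alpha$. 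Lifting $\bar{x}$ to $x \in M$ produces a two-generated submodule $L := Rx + Rm_n$ with $\kdim(M/L) < \alpha$.

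The main obstacle is to replace $L$ by a cyclic submodule of $M$ whose quotient still has Krull dimension $< \alpha$. The natural approach is to examine the family of cyclic submodules $R(x + sm_n)$ for varying $s \in R$. Setting $I_s := \{r \in R : rm_n \in R(x + sm_n)\}$, one computes $L/R(x + sm_n) \cong R/I_s$ (since the image of $\bar{x}$ equals $-s\,\overline{m_n}$), and there is a short exact sequence
$$
0 \to R/I_s \to M/R(x + sm_n) \to M/L \to 0.
$$
Because $\kdim(M/L) < \alpha$, it suffices to find some $s$ with $\kdim(R/I_s) < \alpha$. This is where both hypotheses must combine: on the one hand $R/I_s$ embeds into the fully faithful module $M/R(x + sm_n)$, so whenever it is non-zero it is faithful, forcing $I_s$ to contain no non-zero two-sided ideal; on the other hand, the gap $\alpha < \lkdim(R)$ provides enough room in the lattice of left ideals of $R$. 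I expect the delicate step to be extracting from these two constraints an explicit $s$ with the required small Krull dimension, probably via a refined analysis of critical subfactors of $M$ or a careful linearization argument in the spirit of Stafford's original treatment.
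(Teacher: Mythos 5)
Your reduction is set up correctly as far as it goes: the induction on the number of generators, the passage to the two-generated submodule $L = Rx + Rm_n$ with $\kdim(M/L)<\alpha$, the identification $L/R(x+sm_n)\cong R/I_s$ with $I_s=\{r\in R: rm_n\in R(x+sm_n)\}$, and the exact sequence showing it suffices to find $s$ with $\kdim(R/I_s)<\alpha$ are all sound, and this is indeed the shape of Stafford's original argument. But the proof has a genuine gap exactly where you say you ``expect the delicate step'' to lie: you never produce such an $s$. The two observations you offer do not suffice. Faithfulness of $R/I_s$ (when non-zero) only tells you $I_s$ contains no non-zero two-sided ideal, and the embedding $R/I_s\hookrightarrow M/R(x+sm_n)$ only gives $\kdim(R/I_s)\le\alpha$; nothing you have written rules out that $\kdim(R/I_s)=\alpha$ for \emph{every} choice of $s$. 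Extracting a good $s$ from full faithfulness together with the gap $\kdim(M)<\lkdim(R)$ is the entire content of the lemma (it is where Stafford's ``linking'' argument with $\alpha$-critical subfactors happens), and deferring it to ``a refined analysis of critical subfactors'' or ``a careful linearization argument'' leaves the statement unproved.

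For comparison, the paper takes a different and much shorter route that bypasses the generator induction entirely: it forms the Krull radical $J_\alpha(M)$ (the intersection of kernels of maps to $\alpha$-critical modules), notes that $M/J_\alpha(M)$ still has Krull dimension $\alpha$ and is fully faithful, invokes \cite{GW2004}*{Lemma~16.4} to get $m\in M$ with $(Rm+J_\alpha(M))/J_\alpha(M)$ essential in $M/J_\alpha(M)$, and concludes via \cite{GW2004}*{Corollary~15.12} that $\kdim(M/Rm)<\alpha$. The hard work you are missing is packaged inside that cited Lemma~16.4. If you want a self-contained proof along your lines, you must actually carry out the argument that some $s$ works; as written, your proposal reduces the problem to an equivalent one without solving it.
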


\begin{proof}
  Assume $\kdim(M) = \alpha\ge 0$, and let $J_{\alpha}(M)$ denote the intersection of the kernels of all homomorphisms from $M$ to $\alpha$-critical modules (i.e., the \emph{Krull radical of $M$}). Then, $J_{\alpha}(M)$ is a proper submodule of $M$, the factor module $M/J_{\alpha}(M)$ has Krull dimension $\alpha$ (\cite{GW2004}*{Proposition~15.11}) and hence it is fully faithful by hypothesis. 
  Thus, \cite{GW2004}*{Lemma~16.4} tells us that there exists $m\in M$ such that $(Rm+J_{\alpha}(M))/J_{\alpha}(M)$ is an essential submodule of $M/J_{\alpha}(M)$, and this is the case if and only if $\kdim(M/Rm)<\alpha$ by \cite{GW2004}*{Corollary~15.12}.
\end{proof}

With this, we can now state the main result.

\begin{prop}\label{prop:simple_noet}
  If $R$ is a left noetherian simple ring, then $\mathbb P(R) = \{\dim_{\mathcal Q_l(R)}\}$, where $\mathcal Q_l(R)$ is the classical left quotient ring of $R$. 
\end{prop}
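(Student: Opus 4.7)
By Goldie's theorem (applicable as $R$ is prime, being simple, and left noetherian, hence left Goldie), the classical left ring of quotients $\mathcal{Q}_l(R)$ exists and is simple artinian, hence isomorphic to $\Mat_n(D)$ for some division ring $D$ and integer $n\geq 1$. Combining the uniqueness of the Sylvester rank function on a division ring with \cref{prop:Morita}, we conclude that $\mathbb{P}(\mathcal{Q}_l(R))$ is a singleton, whose pullback along $R\hookrightarrow \mathcal{Q}_l(R)$ is the rank function $\dim_{\mathcal{Q}_l(R)}$. Using the Ore-localization identification \eqref{Ore}, the image of $\mathbb{P}(\mathcal{Q}_l(R))$ in $\mathbb{P}(R)$ consists precisely of those $\dim\in\mathbb{P}(R)$ with $\dim(c)=1$ for every non-zero-divisor $c$ of $R$. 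Therefore it suffices to prove that every $\dim\in \mathbb{P}(R)$ satisfies $\dim(R/Rc)=0$ for every non-zero-divisor $c$.

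My strategy is to deduce this from the stronger assertion that $\dim(M)=0$ for every finitely generated left $R$-module $M$ with $\kdim(M)<\lkdim(R)$, and to prove that by transfinite induction on $\kdim(M)$. Taking $M=R/Rc$ then yields what we want, since $Rc$ is essential in $R$ (a standard fact for semiprime Goldie rings when $c$ is a non-zero-divisor) and consequently $\kdim(R/Rc)<\lkdim(R)$ by \cite{GW2004}*{Corollary~15.12}. The base case $\kdim(M)=-1$ is trivial. For the inductive step, simplicity enters crucially: the annihilator of any non-zero $R$-module is a two-sided ideal, forced to be zero, so every non-zero finitely generated $R$-module is completely faithful. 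We may therefore apply \cref{lemm:Stafford} to $M$, producing a cyclic submodule $N\subseteq M$ with $\kdim(M/N)<\kdim(M)$. The inductive hypothesis gives $\dim(M/N)=0$, and (SMod3) then forces $\dim(M)\leq \dim(N)$.

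It remains to bound $\dim(N)$ by zero. Writing $N\cong R/L$, the inequality $\kdim(R/L)\leq \kdim(M)<\lkdim(R)$ together with \cite{GW2004}*{Corollary~15.12} shows that $L$ is essential in $R$, so Goldie's theorem provides a non-zero-divisor $c'\in L$ and hence a surjection $R/Rc'\twoheadrightarrow N$, giving $\dim(N)\leq \dim(R/Rc')$. The main obstacle I anticipate is closing the induction at this point: in general $\kdim(R/Rc')$ can be as large as $\lkdim(R)-1$ and therefore may exceed $\kdim(M)$, so the inductive hypothesis is not directly available for $R/Rc'$. My plan is to overcome this by iterating \cref{lemm:Stafford} inside $R/Rc'$ itself—legitimate because complete faithfulness is preserved by quotients over a simple ring—and using the noetherianity of $R$ to force the resulting ascending chain of left annihilators to stabilize. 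At the point of stabilization the relevant cyclic module should admit a bijective $R$-endomorphism given by right multiplication by a specific element, and this invariance ought to be leveraged (together with the compactness and convex-linear structure on $\mathbb{P}(R)$) to force its Sylvester rank to vanish, thereby closing the inductive loop.
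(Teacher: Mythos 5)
Your reduction to showing $\dim(M)=0$ for every finitely generated $M$ with $\kdim(M)<\lkdim(R)$ is exactly the paper's, as is the appeal to complete faithfulness and \cref{lemm:Stafford}. But the inductive step as you set it up does not close, and you acknowledge this yourself: applying \cref{lemm:Stafford} to $M$ gives a cyclic $N\le M$ with $\dim(M)\le\dim(N)+\dim(M/N)=\dim(N)\le 1$, and nothing you propose improves the bound $\dim(N)\le 1$ to $0$. Your fallback plan --- realizing $N$ as a quotient of some $R/Rc'$ and iterating Stafford's lemma inside it until a chain of annihilators stabilizes, then hoping that a bijective endomorphism of the resulting cyclic module forces its rank to vanish --- is not a proof; the axioms (SMod1)--(SMod3) certainly do not imply that a cyclic module admitting a bijective endomorphism has dimension zero ($R$ itself is a counterexample), and the circularity you identify ($\kdim(R/Rc')$ may equal or exceed $\kdim(M)$) is left unresolved.

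The missing idea is an amplification trick: apply \cref{lemm:Stafford} not to $M$ but to $M^k$ for arbitrary $k\ge 1$. Since $\kdim(M^k)=\kdim(M)<\lkdim(R)$ (\cite{GW2004}*{Corollary~15.2}) and $M^k$ is still finitely generated and completely faithful, the lemma produces a cyclic $N\le M^k$ with $\kdim(M^k/N)<\kdim(M^k)=\kdim(M)$, so the inductive hypothesis applies to $M^k/N$ and gives $\dim(M^k/N)=0$. Then (SMod2) and (SMod3) yield $k\dim(M)=\dim(M^k)\le\dim(N)+\dim(M^k/N)\le 1$, i.e.\ $\dim(M)\le 1/k$, and letting $k\to\infty$ gives $\dim(M)=0$. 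With this the induction closes, and the rest of your argument (essentiality of $Rc$, the Ore identification \eqref{Ore}, and the uniqueness of the rank function on the simple artinian ring $\mathcal Q_l(R)$) goes through as you wrote it.
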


\begin{proof}
 Observe that since $R$ is left noetherian and simple, the classical left quotient ring of $R$ exists and it is simple artinian (cf. \cite{GW2004}*{Corollary~6.19}). Therefore, $\mathcal Q_l(R)$ is (isomorphic to) a matrix ring over a division ring and hence it has only one rank function, that we denote by $\dim_{\mathcal Q_l(R)}$.
 
 Notice also that the simplicity of $R$ implies that every non-zero left $R$-module is faithful. In particular, every non-zero finitely generated left $R$-module is completely faithful. We are going to use \cref{lemm:Stafford} and transfinite induction to show that for every Sylvester module rank function $\dim$ on $R$ and for every finitely generated $M$ with $\kdim(M)<\lkdim(R)$ (equivalently, for every finitely generated torsion module, see \cite{GW2004}*{Proposition~15.7}), we have $\dim(M) = 0$.
 
 The case $\kdim(M)=-1$ follows from (SMod1) and, at every inductive step, if $M$ is finitely generated with $\kdim(M)<\lkdim(R)$, then so is $M^k$, the direct sum of $k$ copies of $M$, for every positive integer $k$ (cf. \cite{GW2004}*{Corollary~15.2}). Therefore, \cref{lemm:Stafford} tells us that $M^k$ contains a cyclic submodule $N$ such that $\kdim(M^k/N)<\kdim(M^k)$. By induction hypothesis, $\dim(M^k/N) = 0$ and, since $N$ is cyclic, (SMod3) tells us that $\dim(N)\le 1$. 
 Thus, from the short exact sequence $0\to N\to M^k\to M^k/N\to 0$, we obtain that $\dim(M)\le \frac{1}{k}$ for every $k$ in view of (SMod2) and (SMod3). Therefore, the previous claim follows.
 
 Taking into account that, for every non-zero-divisor $x\in R$, right multiplication by $x$ defines an injective endomorphism of left $R$-modules $R\to R$, we obtain $\kdim(R/Rx)<\lkdim(R)$ by \cite{GW2004}*{Lemma~15.6}. Thus, if $\rk$ denotes the Sylvester matrix rank function associated to $\dim$, we deduce
 $$
  \rk(x) = 1-\dim(R/Rx)= 1
 $$
 Therefore, by (\ref{Ore}), $\rk$ can be extended to $\mathcal Q_l(R)$, and by uniqueness of the rank in $\mathcal Q_l(R)$, $\dim$ must be the rank induced by $\dim_{\mathcal Q_l(R)}$.
\end{proof}

We finish the section with another consequence of \cref{lemm:Stafford} that will prove useful later. We say that a ring $R$ is \emph{almost simple} (as introduced in \cite{Jaikin1999}) if every non-zero two-sided ideal of $R$ contains a non-zero element from its center $Z(R)$. We shall say that a left $R$-module $M$ is \emph{$Z(R)$-torsionfree} if for all non-zero $c\in Z(R)$ and for all non-zero $m\in M$, we have $cm \ne 0$ (Note that in the language of \cite{GW2004}*{page~81} this would be called a $Z(R)\backslash \{0\}$-torsionfree module). 

\begin{prop} \label{prop:almost_simple}
Let $R$ be a left noetherian almost simple ring with center $Z(R)$, and let $\dim$ be a Sylvester module rank function on $R$. If $\lkdim(R)\ge 1$ and $M$ is a $Z(R)$-torsionfree left $R$-module of finite length, then $\dim(M)=0$. 
\end{prop}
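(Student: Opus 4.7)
The plan is to mimic the proof of \cref{prop:simple_noet}: apply \cref{lemm:Stafford} to $M^k$ and let $k\to\infty$. The only obstacle is that $R$ is no longer simple, so I need to combine almost simplicity with the $Z(R)$-torsionfreeness and finite length of $M$ to verify that $M^k$ is completely faithful.

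Assume $M\ne 0$. The key observation, which I expect to be the main step, is that for every non-zero $c\in Z(R)$ and every submodule $K\subseteq M^k$, one has $cK=K$. Indeed, such a $K$ is again $Z(R)$-torsionfree (as a submodule of the $Z(R)$-torsionfree module $M^k$) and of finite length, so multiplication by $c$ is an injective $R$-endomorphism of $K$ which is automatically surjective since $K$ is artinian. From this I will deduce that $M^k$ is completely faithful: given $N\subsetneq K\subseteq M^k$ and a non-zero central $c$, the equality $cK=K$ combined with $K\not\subseteq N$ gives $c\notin \ann_R(K/N)$; since $\ann_R(K/N)$ is a two-sided ideal and $R$ is almost simple, this forces $\ann_R(K/N)=0$.

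Once complete faithfulness is in place, $M^k$ is finitely generated (of finite length) with $\kdim(M^k)=0<1\le\lkdim(R)$, and \cref{lemm:Stafford} yields a cyclic submodule $N\subseteq M^k$ with $\kdim(M^k/N)<0$, i.e., $M^k=N$ is cyclic. Therefore, by (SMod2) and (SMod3) applied to a surjection $R\to M^k$, I obtain $k\dim(M)=\dim(M^k)\le\dim(R)=1$. Since this holds for every $k\ge 1$, it follows that $\dim(M)=0$.
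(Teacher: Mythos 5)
Your proposal is correct and follows essentially the same route as the paper: both establish that a non-zero $Z(R)$-torsionfree module of finite length is completely faithful by combining almost simplicity with the fact that left multiplication by a non-zero central element is an injective, hence surjective, endomorphism, and then apply \cref{lemm:Stafford} to $M^k$ to conclude $\dim(M)\le \frac{1}{k}$ for all $k$. The only cosmetic difference is that the paper proves complete faithfulness of $M$ first and then notes the same holds for $M^k$, whereas you argue directly on submodules of $M^k$.
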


\begin{proof}
 Notice first that every non-zero $Z(R)$-torsionfree module $M$  with finite length is completely faithful. Indeed, assume that there exists $N\lneq M$ such that $M/N$ is not fully faithful, and let $L$ be a submodule of $M$ with $N\lneq L$ and $L/N$ not faithful. Then $\ann_R(L/N)$ is a non-zero two-sided ideal of $R$, and hence it contains a non-zero element $c\in Z(R)$. Thus, $cL\subseteq N \subsetneq L$. 
 However, since $L\le M$, we have that $L$ is $Z(R)$-torsionfree of finite length and therefore the map $\varphi_c: L\to L$ defined by left multiplication by $c$, which is an $R$-homomorphism because $c\in Z(R)$, is injective. By additivity of length (cf. \cite{GW2004}*{Proposition~4.12}), it must also be surjective, from where $L= \im \varphi_c =cL$, a contradiction.
 
 Observe now that for every $k\ge 1$, $M^k$ is also $Z(R)$-torsionfree and has finite length. 
 In particular, $\kdim(M^k)=0$ and we can apply \cref{lemm:Stafford} to deduce that $M^k$ is cyclic. 
 By (SMod3), $\dim(M^k) \le 1$ for every $k$, and hence (SMod2) implies that $\dim(M)\le \frac{1}{k}$ for every $k$, from where $\dim(M) = 0$.
\end{proof}

\section{Skew Laurent polynomials over division rings} \label{sect:Laurent}

This section focuses on describing the space of rank functions associated to a skew Laurent polynomial ring $ \DC[t^{\pm 1};\tau]$, where $\DC$ is a division ring and $\tau$ is an automorphism of $\DC$. Unless otherwise specified, throughout this section $R$ denotes a skew Laurent polynomial ring of this form.

The main result of the section describes the extreme rank functions on $R$ and shows that every rank function on $R$ is the unique extension of a rank function on its center $Z(R)$ (cf. \cref{Question}). We also notice at the end of the section that the same theory developed for this ring can be applied to the usual polynomial ring $\DC[t]$ and, more generally, to polynomial rings with coefficients on a simple artinian ring.

In order to prove this, we need to recall the structure of two-sided ideals and the center of $R$, and we start with the description of the latter in the following lemma (cf. \cite{BK2000}*{Lemma~3.2.14}). Recall that the \emph{inner order} of an automorphism $\tau$ is the smallest positive integer $m$ such that $\tau^m$ is an inner automorphism, and we say that $\tau$ has infinite inner order if no positive power of $\tau$ is inner. Over a division ring $\DC$, infinite inner order of the automorphim $\tau$ of $\DC$ is equivalent to the simplicity of $\DC[t^{\pm 1};\tau]$ (cf. \cite{GW2004}*{Theorem~1.17}).

\begin{lemm} \label{lemm:Laurent_center}
Denote $K = Z(\DC)$, and let $K^{\tau}$ denote the subfield of $K$ formed by the elements of $K$ fixed by the automorphism $\tau$ of $\DC$. Then,
\begin{enumerate}
  \item[(i)] If $\tau$ has infinite inner order, then $Z(R) = K^{\tau}$.  
  \item[(ii)]If $\tau$ has inner order $m$, say $\tau^m(d)=a^{-1}da$ for some $a\in \DC$, and  $k$ is the smallest positive integer for which there exists a non-zero $b\in K$ such that $\tau(ba^k) = ba^k$, then $Z(R) = K^{\tau}[(ba^kt^{km})^{\pm 1}]$.
\end{enumerate}
\end{lemm}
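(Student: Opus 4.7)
The plan is to characterize centrality of a general Laurent polynomial $f=\sum_{i\in\ZZ} d_i t^i\in R$ by imposing commutation with the generating set $\DC\cup\{t^{\pm 1}\}$. Using the defining relation $td=\tau(d)t$, commutation with $t$ yields $\tau(d_i)=d_i$ for every $i$, while commutation with an arbitrary $d\in\DC$ gives $dd_i = d_i\tau^i(d)$; equivalently, whenever $d_i\neq 0$, conjugation by $d_i$ agrees with $\tau^i$ on $\DC$, which in particular forces $\tau^i$ to be inner.

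For part (i), the hypothesis of infinite inner order immediately forces $d_i=0$ for all $i\neq 0$ (noting that if $\tau^i$ is inner for some $i\ne 0$, so is $\tau^{|i|}$). The remaining constraints on $d_0$ then read $d_0\in Z(\DC)=K$ together with $\tau(d_0)=d_0$, i.e., $d_0\in K^{\tau}$, so $Z(R)=K^{\tau}$.

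For part (ii), I would first observe by induction that $\tau^{jm}(d)=a^{-j}da^j$, so $\tau^i$ is inner if and only if $m\mid i$. Writing $i=jm$, the centrality condition says $d_{jm}a^{-j}\in Z(\DC)=K$, forcing $d_{jm}=c_j a^j$ with $c_j\in K$, subject further to $\tau(c_j a^j)=c_j a^j$. Next I would invoke the minimality of $k$ to conclude that every exponent $j$ with $c_j\neq 0$ must be a multiple of $k$; the case $j<0$ is handled by passing to inverses, since $\tau(c_j a^j)=c_j a^j$ implies $\tau(c_j^{-1} a^{-j})=c_j^{-1}a^{-j}$, exhibiting $-j>0$ as an exponent meeting the defining property of $k$.

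The inclusion $K^{\tau}[(ba^k t^{km})^{\pm 1}]\subseteq Z(R)$ is then a routine check on generators, using that $b\in K$ is central in $\DC$ and that $t^{km}$ commutes with $ba^k$ thanks to $\tau(ba^k)=ba^k$. For the reverse inclusion, centrality of $b$ in $\DC$ gives $(ba^k t^{km})^l = b^l a^{lk} t^{lkm}$, so every non-zero term $c_{lk}a^{lk}t^{lkm}$ of $f$ can be rewritten as $(c_{lk}b^{-l})(ba^k t^{km})^l$. The last and most delicate step is to verify that the scalar $c_{lk}b^{-l}$ actually lies in $K^{\tau}$. I would do this by comparing the two identities $\tau(c_{lk}a^{lk})=c_{lk}a^{lk}$ and $\tau(b^l a^{lk})=b^l a^{lk}$ (the latter following from $\tau(ba^k)=ba^k$ and the fact that $b$ commutes with $a$), then dividing out to cancel the unknown $\tau(a^{lk})$. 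That cancellation of $\tau(a^{lk})$, which I do not know explicitly, is the main technical subtlety of the argument, but it drops out cleanly once the setup is in place.
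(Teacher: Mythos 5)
Your proposal is correct and follows essentially the same route as the paper: characterize centrality by commutation with $\DC$ and with $t$, deduce $d_{jm}=c_ja^j$ with $c_j\in K$ and $\tau(c_ja^j)=c_ja^j$, and then use the minimality of $k$ to reduce everything to powers of $ba^kt^{km}$ with coefficients in $K^{\tau}$. The one place you assert rather than derive is that minimality of $k$ forces $k\mid j$ (minimality alone only gives $k\le |j|$); this needs a Euclidean division $|j|=kn+l$ together with the observation that $(c_ja^j)^{\pm 1}(ba^k)^{-n}=c'a^l$ is a $\tau$-fixed element with $c'\in K$ non-zero, contradicting minimality unless $l=0$ --- which is exactly the same cancellation trick you already use for the membership $c_{lk}b^{-l}\in K^{\tau}$, so the gap closes with the tools you have on the table.
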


\begin{proof}
  Take an element $p \in Z(R)$, $p = \sum a_i t^i$. Since it must commute with every element $d\in \DC$, we obtain that $da_i = a_i\tau^i(d)$ for every $i$, and since $p$ commutes with $t$, we have $a_i = \tau(a_i)$. Thus, if $\tau$ has infinite inner order, we obtain from the first condition that $a_i = 0$ for every $i\ne 0$ and $a_0\in K$, and from the second that actually $a_0\in K^{\tau}$. Thus, $p\in K^{\tau}$. Since clearly $K^{\tau}$ lies in the center, we obtain $(i)$. 
  
  If, on the contrary, $\tau$ has inner order $m$, then we obtain from the first condition that $a_i = 0$ for every $i$ not dividing $m$, and that $\tau^{mj}$ is given by conjugation by $a_{mj}$ whenever it is non-zero.
  But $\tau^{mj}$ is also given by conjugation by $a^j$, so for every $d$,
    $$
      a^{-j}da^j = a_{mj}^{-1}da_{mj}.
    $$
  We deduce that $a_{mj}a^{-j}\in K$, i.e., there exists a non-zero $c_j\in K$ such that $a_{mj} = c_ja^j$. 
  From the second condition we obtain now that $a_0 = c_0\in K^{\tau}$, and for every $j\ne 0$, $\tau( c_ja^j) = c_ja^j$, from where the choice of $k$ implies that $k\leq |j|$. 
  We claim that $k$ divides $|j|$ and $c_jb^{-j/k} \in K^{\tau}$. Indeed, let $\sg(j)$ denote the sign of $j$ (i.e., $\sg(j)=1$ if $j$ is positive and $-1$ if it is negative), so that $c_j^{\sg(j)}a^{|j|} = (c_ja^{j})^{\sg(j)}$, and notice that if $|j| = kn+l$ for some $n\ge 1$ and $0\le l<k $, then $c_j^{\sg(j)}b^{-n}\in K$ and $$\tau(c_j^{\sg(j)}b^{-n}a^l) = 
  \tau(c_j^{\sg(j)}a^{|j|}(ba^k)^{-n}) = c_j^{\sg(j)}a^{|j|}(ba^k)^{-n} = c_j^{\sg(j)}b^{-n}a^l.$$ 
  The minimality of $k$ implies $l = 0$ and thus $c_j^{\sg(j)}b^{-n}\in K^{\tau}$, so $c_jb^{-j/k} \in K^{\tau}$.
  
  As a consequence, there exists $z_r\in K^{\tau}$ such that $p = \sum_r z_rb^ra^{kr}t^{mkr}$. Since $(ba^kt^{mk})^r = b^r a^{rk} t^{rmk}$ and, for every $d\in \DC$, $d(ba^kt^{mk})^r = (ba^kt^{mk})^r d$, $p$ can be seen as a Laurent polynomial in the commuting variable $ba^kt^{mk}$ with coefficients in $K^\tau$.  Conversely, every such polynomial $p$ belongs to the center. Thus, $Z(R)$ is the ordinary Laurent polynomial ring $K^{\tau}[(ba^kt^{mk})^{\pm 1}]$ and we have proved $(ii)$.
  \end{proof}
  
Note that there always exists an element $b$ as in the hypothesis. Indeed, when $\tau$ has finite inner order $m$ we have that $R$ is not simple and, as we would have also deduced from \cref{lemm:ideals_Laurent}(1.), $Z(R)\cap \DC[t;\tau]\nsubseteq \DC$ (For a concrete example, see \cite{GW2004}*{Exercise~1U}). Now, for every $p\in (Z(R)\cap \DC[t;\tau])\backslash \DC$, we saw in the proof that every non-zero coefficient corresponding to positive degree $mj$ is of the form $c_ja^j$, $c_j\in K$ and $\tau(c_ja^j) = c_ja^j$, as desired.

More than its precise description, it is important to observe that if $R$ is non-simple, the center of $R$ is an ordinary Laurent polynomial ring over a field, and hence every result in this section also applies to the center. When $R$ is simple, the center is a field and we obtain the following from \cref{prop:simple_noet}.

\begin{coro} \label{coro:simple_Laurent}
  If $\tau$ has infinite inner order, the only Sylvester rank function on $R$ is the one coming from its Ore quotient ring, and extends the unique Sylvester rank function on $Z(R)$.
\end{coro}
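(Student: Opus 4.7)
The plan is essentially to chain together the already established results. Since $\tau$ has infinite inner order, the cited fact \cite{GW2004}*{Corollary~1.15 \& Theorem~1.17} tells us that $R = \DC[t^{\pm 1};\tau]$ is simple and left noetherian, so \cref{prop:simple_noet} applies directly and gives $\PP(R) = \{\dim_{\QC_l(R)}\}$, where $\QC_l(R)$ is the classical left quotient ring of $R$. This handles the first half of the statement.

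For the second half, I would invoke \cref{lemm:Laurent_center}(i), which under the infinite inner order hypothesis identifies the center as $Z(R) = K^{\tau}$, a (commutative) field. As noted already at the end of \cref{sect:Sylvester}, a division ring admits exactly one Sylvester matrix rank function, namely its usual rank; in particular $\PP(Z(R))$ is a singleton. The restriction of $\dim_{\QC_l(R)}$ along the inclusion $\iota\colon Z(R) \hookrightarrow R$ is, by functoriality of $\iota^{\sharp}$, an element of $\PP(Z(R))$, and since this space has only one point, this restriction must coincide with the unique rank function on $Z(R)$. Hence the unique rank on $R$ extends the unique rank on its center.

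There is essentially no obstacle here: the corollary is a straightforward packaging of \cref{prop:simple_noet} and \cref{lemm:Laurent_center}(i), together with the trivial observation that a field supports a unique Sylvester rank function so that the extension property is automatic. The only point worth stating carefully is the existence of $\QC_l(R)$, which is guaranteed because a simple left noetherian ring is in particular left Goldie, so its classical left quotient ring exists and is simple artinian (as recalled in the proof of \cref{prop:simple_noet}).
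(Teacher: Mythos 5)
Your argument is correct and matches the paper's own reasoning exactly: simplicity and noetherianity of $R$ (via \cite{GW2004}) feed into \cref{prop:simple_noet} for the first assertion, and \cref{lemm:Laurent_center}(i) identifies $Z(R)$ as the field $K^{\tau}$, whose unique rank function must then be the restriction of the unique rank on $R$. Nothing is missing.
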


The last assertion of the corollary follows since in $Z(R)$, a field, there exists a unique rank function, which is then necessarily the restriction of the unique rank function in $R$.

For the rest of the section, assume that $\tau$ has finite inner order and let us denote $S = Z(R)\cap \DC[t;\tau]$, which is an ordinary polynomial ring over a field by \cref{lemm:Laurent_center}. In the following lemma we relate two-sided ideals of $R$ and elements of $S$. Recall that a non-constant $p\in S$ is \emph{irreducible} if it cannot be expressed as a product $p=rq$ for some non-constant $r,q\in S$. Recall also that $p,q\in S$ are said to be \emph{associates} if there exist $r,r'\in S$ such that $p=rq, q=r'p$. A comparison of degrees shows that this is the case if and only if $p=dq$ for some unit $d\in Z(R)\cap \DC$.

\begin{lemm} \label{lemm:ideals_Laurent}
  The following hold:
  \begin{enumerate}
      \item[(1.)] Let $I$ be a non-zero proper two-sided ideal of $R$. Then, there exists a non-constant polynomial $p\in S$ with non-zero constant term such that $p$ generates $I$ and which is irreducible in $S$ if and only if $I$ is maximal.
      \item[(2.)] There exists a bijective correspondence between maximal two-sided ideals in $R$ and irreducibles in $S$ with non-zero constant term up to association. 
      This defines a bijective correspondence between maximal two-sided ideals $\mf$ of $R$ and maximal ideals of $Z(R)$ sending $\mf$ to $\mf_Z = \mf \cap Z(R)$.
      \item[(3.)] If $\mf_1, \dots, \mf_n$ are different maximal two-sided ideals in $R$, then for all positive integers $k_1,\dots,k_n$, we have $\bigcap_i \mf_i^{k_i} = \mf_1^{k_1}\dots \mf_n^{k_n}$.
      \item[(4.)] Every non-zero proper two-sided ideal $I$ in $R$ is of the form $\bigcap_i \mf_i^{k_i}$ for some maximal two-sided ideals $\mf_1,\dots,\mf_n$ and positive integers $k_1,\dots,k_n$.
  \end{enumerate}
\end{lemm}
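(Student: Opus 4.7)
The plan is to exploit the existence of the central element $z = ba^k t^{km}$ described in \cref{lemm:Laurent_center}, which realises $S = Z(R)\cap \DC[t;\tau]$ as the polynomial ring $K^{\tau}[z]$ and $Z(R)$ as its localisation $K^{\tau}[z^{\pm 1}]$, reducing all four assertions to statements about principal ideal domains. Throughout, I rely on the fact that $R$ is a domain whose group of units is exactly $\DC^* \cdot t^{\ZZ}$, the set of non-zero monomials, as a consequence of its grading by $t$-degree.

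For statement $(1.)$, the central step is to show that any non-zero two-sided ideal $I$ contains an element of $S$ with non-zero constant term which generates $I$. I would pick $f \in I$ non-zero of minimal $t$-length (difference between maximum and minimum $t$-degree); after multiplying by a suitable power of $t$ on the left and by the inverse of the lowest coefficient on the right, I normalise so that $f = 1 + \sum_{i\ge 1} b_i t^i \in \DC[t;\tau]$. For any $d \in \DC^*$, the element $fd - df \in I$ has zero constant term, hence minimality of the length of $f$ forces it to vanish, i.e., $b_i \tau^i(d) = d b_i$ for every $i \ge 1$ and every $d$. The analogous trick with $t f t^{-1} - f \in I$ yields $\tau(b_i) = b_i$. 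These are precisely the conditions describing elements of $Z(R)$, so by \cref{lemm:Laurent_center} we conclude $f \in S$. To see $I = Rf$, note that $f$ has a unit leading coefficient in $\DC^*$, so the usual (left) division algorithm in $\DC[t;\tau]$ applies: given $g \in I$, which we may assume lies in $\DC[t;\tau]$ after multiplying by a power of $t$, we write $g = qf + s$ with $\deg_t s < \deg_t f$; then $s \in I$ forces $s = 0$ by minimality of the length of $f$.

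The irreducibility part of $(1.)$ and the bijections in $(2.)$ follow by translation to the commutative setting. A factorisation $f = rq$ in $S$ with both factors non-constant produces a proper intermediate two-sided ideal $Rr$, so maximality of $I = Rf$ forces $f$ irreducible. Conversely, if $I \subsetneq J \subseteq R$ is a proper two-sided ideal with $J = Rq$ and $q \in S$ of non-zero constant term, then writing $f = aq$ with $a \in R$ and comparing $b f = baq$ with $fb = aqb = abq$ (using centrality of $q$) gives $(ba - ab)q = 0$, so $ab = ba$ for all $b \in R$ by the domain property; hence $a \in Z(R) = K^{\tau}[z^{\pm 1}]$, and a constant-term comparison on the identity $z^M f = a' q$ (for $a = z^{-M} a'$ with $a' \in S$) forces $M = 0$ and $a \in S$, reducing to factorisation in $S$ and contradicting irreducibility of $f$. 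The same centrality argument shows $Rp \cap Z(R) = Z(R) p$ for every central $p$, while the match between irreducibles of $S$ with non-zero constant term and irreducibles of $Z(R)$ up to association---note that $z$ is a unit in $Z(R)$ but not in $S$---delivers the bijection between maximal two-sided ideals of $R$ and maximal ideals of $Z(R)$.

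Statements $(3.)$ and $(4.)$ then fall out quickly. For $(3.)$, write $\mf_i = R p_i$ with $p_i \in S$ pairwise non-associate irreducibles; centrality of $p_i$ gives $\mf_i^{k_i} = R p_i^{k_i}$ and $\prod_i \mf_i^{k_i} = R \prod_i p_i^{k_i}$. B\'ezout in the PID $Z(R)$ furnishes, for each $i\neq j$, elements $u,v\in Z(R)$ with $u p_i^{k_i} + v p_j^{k_j} = 1$; multiplying any element of $\mf_i^{k_i} \cap \mf_j^{k_j}$ by this expression (and freely commuting factors using centrality) places it in $\mf_i^{k_i}\mf_j^{k_j}$, and induction on the number of maximal ideals gives the general case. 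For $(4.)$, part $(1.)$ produces $I = Rp$ with $p \in S$ of non-zero constant term; factoring $p$ in the UFD $S = K^{\tau}[z]$ as a product of powers of distinct irreducibles---all necessarily with non-zero constant term since $p$ does---yields $I = \prod_i \mf_i^{k_i}$, which coincides with $\bigcap_i \mf_i^{k_i}$ by $(3.)$. The main technical care goes into the minimal-length argument of $(1.)$: one must verify that the minimisation forces both commutation with $\DC$ and invariance under $\tau$, pinning the element inside $S$ rather than merely in some centraliser, and then the unit group $\DC^* \cdot t^{\ZZ}$ is the tool used repeatedly to rule out spurious associates.
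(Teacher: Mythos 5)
Your proposal is correct and follows essentially the same route as the paper: extract from $I$ an element of minimal degree (equivalently, minimal $t$-length) in $I\cap\DC[t;\tau]$ with non-zero constant term, show via the commutators with $\DC$ and with $t$ that it lies in $S$, use the division algorithm in $\DC[t;\tau]$ to get $I=Rp$, and then translate maximality, association and factorisation questions into the commutative polynomial ring $S=K^{\tau}[z]$ and its localisation $Z(R)=K^{\tau}[z^{\pm1}]$. The only genuine (and minor) divergence is in part~$(3.)$: you prove $\bigcap_i\mf_i^{k_i}\subseteq\prod_i\mf_i^{k_i}$ by a B\'ezout/comaximality identity $up_i^{k_i}+vp_j^{k_j}=1$ in the PID $Z(R)$ together with centrality, whereas the paper applies part~$(1.)$ to obtain a central generator $q$ of the intersection and then invokes unique factorisation in $S$ to conclude that $p_1^{k_1}\cdots p_n^{k_n}$ divides $q$; both are routine once $(1.)$ and $(2.)$ are available, and your version has the mild advantage of not needing to re-apply $(1.)$ to the intersection. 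Your centrality argument for the cofactor $a$ in $f=aq$ (cancel $q$ in $(ba-ab)q=0$, then rule out negative powers of $z$ by a constant-term comparison) is the same cancellation trick the paper uses, just run in the opposite order to its degree comparison.
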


\begin{proof}
  \noindent (1.) Let $p$ be a non-zero element of $I \cap \DC[t;\tau]$ of smallest degree, and note that $p$ is non-constant because $I$ is proper. In addition, $p$ has non-zero constant term because otherwise $pt^{-1}\in I \cap \DC[t;\tau]$ would be a polynomial of lower degree. Since $\DC$ is a division ring, we can take $1$ as the constant term. Now, $p$ commutes with $t$ because $p$ and $tpt^{-1}$ have constant term $1$ and $(p-tpt^{-1})t^{-1}\in I\cap \DC[t;\tau]$ has lower degree than $p$, and thus it must be zero. Similarly, $p$ commutes with every element of $\DC$, and we deduce that $p\in Z(R)$, i.e., $p\in S$.
  
  Since $\DC$ is a division ring and $\tau$ is an automorphism, we can divide polynomials in $\DC[t;\tau]$ (cf. \cite{BK2000}*{3.2.6}) and hence, given any other $p'\in I$ and an integer $k$ such that $t^kp'\in I \cap \DC[t;\tau]$ we have $t^kp' = qp+r$ for some $q,r\in \DC[t;\tau]$ with $\deg(r)<\deg(p)$. Since $r\in I\cap \DC[t;\tau]$,
  $r$ must be zero, and thus $p'=(t^{-k}q)p$. Therefore, $I = Rp$ as claimed. 
  
   Assume now that $I$ is not maximal and let $J$ be a proper two-sided ideal properly containing $I$. Then $J = Rq$ for some $q\in S$ as above, and hence $p = rq$ for some $r\in R$. Since $p,q$ have non-zero constant term, a comparison of degrees shows that $r\in \DC[t;\tau]$ and $0<\deg(r),\deg(q)<\deg(p)$. 
   In addition, for every $q'\in R$, we have
$$
  q'rq = q'p = pq' = rqq' = rq'q 
$$
from where $q'r = rq'$ since $R$ is a domain. Thus, $r\in S$ and $p = rq$ is a product of non-constant polynomials in $S$ of lower degree and therefore $p$ is not irreducible in $S$. 

Conversely, assume that $p$ is not irreducible in $S$, and let $r,q\in S$ be non-constant with $p = rq$. Note in particular that  $0<\deg(r),\deg(q)<\deg(p)$, and that $q$ must have non-zero constant term.  
Therefore $J = Rq$ is a proper two-sided ideal of $R$ with $I \subsetneq J$ (since $\deg(q)<\deg(p)$ and both have non-zero constant term) and hence $I$ is not maximal.  
\smallskip

\noindent (2.) By (1.), every maximal ideal $\mf$ is generated by an irreducible element $p\in S$ with non-zero constant term. Conversely, if $q\in S$ is irreducible with non-zero constant term we can check as in the final step of (1.) that $Rq$ is a maximal two-sided ideal of $R$. 

Let $\mf_1, \mf_2$ be maximal two-sided ideals in $R$, and assume $\mf_i = Rp_i$ for irreducible $p_1, p_2\in S$ with non-zero constant term. If $\mf_1 = \mf_2$, then $p_2 = r_1p_1$ and $p_1 = r_2p_2$ for some $r_1,r_2\in R$. Reasoning as above we can show that $r_i\in S$, and since $R$ is a domain, we obtain from $p_2 = r_1r_2p_2$ that they are units of $\DC$. Thus $p_1$ and $p_2$ are associates in $S$. Conversely, distinct maximals have non-associate generators, and the correspondence is bijective.

Now, let $Z(R) = K^{\tau}[s^{\pm 1}]$ with $s= ba^kt^{km}$ as in \cref{lemm:Laurent_center}. Then $S = K^{\tau}[s]$ and, since $Z(R)$ is again a Laurent polynomial ring over a field, the same argument above shows that the correspondence between maximal ideals of $Z(R)$ and irreducibles in $S$ with non-zero constant term (up to association) is bijective. Therefore, we have a bijection that sends the maximal two-sided ideal $\mf = Rp$ to the maximal ideal $\mf_Z = Z(R)p$. Moreover, $\mf_Z = \mf\cap Z(R)$, because if $x= rp\in \mf\cap Z(R)$ for some $r\in R$, then the same argument in (1.) shows that $r\in Z(R)$, and hence $x\in \mf_Z$. The other containment is clear. 
\smallskip

\noindent (3.)  Assume that $\mf_i = Rp_i$ for $p_i\in S$ irreducible with non-zero constant term. On the one hand, note that $\mf_1^{k_1}\dots \mf_n^{k_n} = Rp_1^{k_1}\dots p_n^{k_n} \subseteq \cap_i\mf_i^{k_i}$. On the other hand, let $q\in S$ with non-zero constant term be such that $\cap_i\mf_i^{k_i} = Rq$. In particular, $q = r_1p_1^{k_1} =\dots =r_np_n^{k_n}$ for some $r_1,\dots,r_n\in R$ and as above we can deduce that $r_i\in S$. But $S$ is an ordinary polynomial ring over a field, hence a unique factorization domain, and $p_1,\dots,p_n$ are non-associate irreducibles in $S$. Thus, necessarily $p_1^{k_1}\dots p_n^{k_n}$ divides $q$ in $S$, and therefore we also have $\cap_i\mf_i^{k_i} \subseteq \mf_1^{k_1}\dots\mf_n^{k_n}$.
\smallskip

\noindent (4.) If $I = Rp$ for some $p\in S$ with non-zero constant term and $p = up_1^{k_1}\dots p_n^{k_n}$ in $S$ with $p_i\in S$ non-associate irreducibles and $u$ a unit, then $p_i$ has non-zero constant term and we deduce from the proof of (3.) that $I = \cap_i \mf_i^{k_i}$ where $\mf_i = Rp_i$.
\end{proof}

Before getting to the description of $\mathbb P(R)$, we obtain a partial picture by analysing quotient rings $R/\mf^n$ for maximal two-sided ideals $\mf$. Observe that if $I$ is a non-zero proper two-sided ideal generated by a central element $q\in S$ with non-zero constant term, then $R/I$ is a left $\DC$-module with $\dim_{\DC}(R/I) = \deg(q)$ and with a natural $\DC$-basis $\{1+I,\dots, t^{\deg(q)-1}+I\}$. 

\begin{prop} \label{prop:quotient_by_maximal}
  For any maximal two-sided ideal $\mf$ of $R$ and positive integer $n$, there are exactly $n$ extreme points on $\mathbb P(R/\mf^n)$. Moreover, $\mathbb P(R/\mf^n) = \mathbb P_{\reg}(R/\mf^n)$ and the natural embedding $\varphi_n: Z(R)/(\mf^n\cap Z(R))\to R/\mf^{n}$ gives a bijection $\varphi_n^{\sharp}:\mathbb P(R/\mf^n) \to \mathbb P(Z(R)/(\mf^n\cap Z(R)))$. 
\end{prop}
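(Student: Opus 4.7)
The plan is to identify $R/\mf^n$ as a left artinian primary ring whose Jacobson radical is generated by a central nilpotent element, invoke \cref{coro:primary_art}, match its $n$ extreme ranks against those of $Z(R)/\mf_Z^n$ (via \cref{prop:local_art}) through the central embedding $\varphi_n$, and finally address regularity via a structural identification of $R/\mf^n$.

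By \cref{lemm:ideals_Laurent}(1.), one can write $\mf = Rp$ for some central irreducible $p \in S$ with non-zero constant term, so $\mf^n = Rp^n$. Euclidean division in $\DC[t;\tau]$ shows that $R/\mf^n$ is a free left $\DC$-module of rank $n\deg(p)$, hence left artinian. Maximality of $\mf$ makes $R/\mf$ simple artinian, so $\mf/\mf^n = J(R/\mf^n)$; being generated by the central element $p+\mf^n$ of nilpotency order $n$, \cref{coro:primary_art} applies and yields exactly $n$ extreme rank functions $\rk_1,\dots,\rk_n$ on $R/\mf^n$, characterized by their values on powers of $p+\mf^n$.

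For the bijection, I would first verify that $\mf^n \cap Z(R) = Z(R)p^n = \mf_Z^n$: repeating the degree argument in the proof of \cref{lemm:ideals_Laurent}(1.), any $rp^n \in Rp^n \cap Z(R)$ forces $r \in Z(R)$. Thus $Z(R)/\mf_Z^n$ is a commutative local artinian ring whose Jacobson radical is generated by $p+\mf_Z^n$ of nilpotency $n$, so \cref{prop:local_art} yields $n$ extreme rank functions $\rk_1',\dots,\rk_n'$ on $Z(R)/\mf_Z^n$ determined by the same formula on powers of $p$. Since $\varphi_n$ sends $p+\mf_Z^n$ to $p+\mf^n$, one has $\varphi_n^{\sharp}(\rk_k)(p^i+\mf_Z^n) = \rk_k(p^i+\mf^n) = \rk_k'(p^i+\mf_Z^n)$ for every $i$, and \cref{prop:local_art} then forces $\varphi_n^{\sharp}(\rk_k) = \rk_k'$. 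Both $\PP(R/\mf^n)$ and $\PP(Z(R)/\mf_Z^n)$ being $(n-1)$-simplices spanned by these extreme ranks, $\varphi_n^{\sharp}$ is a convex-linear bijection.

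The main obstacle is the regularity assertion $\PP(R/\mf^n) = \PP_{\reg}(R/\mf^n)$, which by convexity of $\PP_{\reg}$ reduces to the regularity of each $\rk_k$. The planned approach is to establish a structural isomorphism $R/\mf^n \cong \Mat_s(\DC'[u]/(u^n))$, where $R/\mf \cong \Mat_s(\DC')$ is the Wedderburn decomposition of the residue simple artinian ring. Concretely, this amounts to a noncommutative Cohen-type lifting: the nilpotent extension $0 \to \mf/\mf^n \to R/\mf^n \to R/\mf \to 0$ should split as rings, producing a subring of $R/\mf^n$ mapped isomorphically onto $R/\mf$ by the quotient, which together with the central element $p+\mf^n$ yields the claimed isomorphism. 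Granting this, \cref{prop:Morita} reduces the problem to $\DC'[u]/(u^n)$, and the proof of \cref{prop:Kt_tn} (which works verbatim over any division ring) realizes each extreme rank through the ring homomorphism $\DC'[u]/(u^n) \to \End_{\DC'}(\DC'[u]/(u^k)) \cong \Mat_k(\DC')$, hence regular.
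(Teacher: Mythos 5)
Your count of the extreme points and your treatment of the bijection $\varphi_n^{\sharp}$ follow the paper's proof almost exactly: identify $R/\mf^n$ as a left artinian primary ring whose Jacobson radical is generated by the central element $p+\mf^n$ of nilpotency order $n$, apply \cref{coro:primary_art} (resp.\ \cref{prop:local_art}) on the two sides of $\varphi_n$, and match extreme ranks through their values on the powers of $p$. That part is correct and is essentially the paper's argument.

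The regularity claim, however, has a genuine gap. You reduce it to the assertion that the nilpotent extension $0\to \mf/\mf^n\to R/\mf^n\to R/\mf\to 0$ splits as a ring extension, yielding $R/\mf^n\cong \Mat_s(\DC'[u]/(u^n))$, but you give no proof of this splitting, and it is not a routine fact. A Wedderburn--Malcev type lifting requires a finite-dimensional algebra over a field with separable semisimple quotient; here $R/\mf^n$ is finite-dimensional over $\DC$ only as a left module ($\DC$ is not central), and over the central field $K^{\tau}$ it need not be finite-dimensional at all, since no finiteness of $[\DC:K^{\tau}]$ is assumed. The paper's own discussion of $\OC/\mf^n$ in the Dedekind section shows that the analogous commutative statement already fails without an equicharacteristic hypothesis, which is precisely why such an identification is handled with care there. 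More importantly, the splitting is unnecessary: the paper obtains regularity directly by noting that $R/\mf^j$ is a free left $\DC$-module of rank $j\deg(p)$, so right multiplication gives a ring homomorphism $\psi\colon R/\mf^n\to\End_{\DC}(R/\mf^j)\cong \Mat_{j\deg(p)}(\DC)$; pulling back the unique (regular) rank function of this matrix ring and computing the image of $p^i+\mf^n$ in the basis $\{t^ap^b+\mf^j\}$ shows that the pullback takes the value $\frac{j-i}{j}$ for $i\le j$ and $0$ otherwise, hence coincides with the $j$-th extreme rank by \cref{coro:primary_art}. Your final step is exactly this computation, performed after an unproven and avoidable reduction; to keep your route you would have to actually establish the lifting, and then still verify (e.g.\ by a $\DC$-dimension count) that the lifted copy of $R/\mf$ together with $p+\mf^n$ generates $R/\mf^n$.
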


\begin{proof}
By \cref{lemm:ideals_Laurent}(1.), there exists $p\in S$ with non-zero constant term and irreducible over $S$ such that $\mf = Rp$. In particular, this implies that $\mf_Z = \mf\cap Z(R) = Z(R)p$ is also a maximal 
ideal of $Z(R)$ by \cref{lemm:ideals_Laurent}(2.). 

For $n= 1$, $R/\mf$ is simple because $\mf$ is maximal. It is also left (and hence right; see \cite{GW2004}*{Corollary~4.18}) artinian because every descending chain of left ideals of $R/\mf$ is also a descending chain of left $\DC$-modules, and since $R/\mf$ is finite $\DC$-dimensional, it must stabilize. Thus, $R/\mf$ is
isomorphic to a matrix ring over a division ring, and hence it has only one rank function $\rk$. In this case $Z(R)/\mf_Z$ is a field and hence its unique rank function must then coincide with $\varphi_1^{\sharp}(\rk)$.

For $n\ge 2$, $R/\mf^n$ is a left artinian ring by the previous argument with a unique maximal two-sided ideal $\mf/\mf^n = (p+\mf^n)$. Since $p+\mf^n$ is central nilpotent (with order of nilpotency $n$), we have $p+\mf^n\subseteq J(R/\mf^n)$ and hence by maximality $J(R/\mf^n) = (p+\mf^n)$. 
Therefore, $J(R/\mf^n)$ is generated by a central element and  $(R/\mf^n)/J(R/\mf^n) = (R/\mf^n)/(\mf/\mf^n) \cong R/\mf$ is simple.  

Summing up, $R/\mf^n$ is a left artinian primary ring whose Jacobson radical is generated by a central element of order of nilpotency $n$. By \cref{coro:primary_art}, every rank function on $R/\mf^n$ is determined by its values on $p^i+\mf^n$ for $1\le i\le n-1$, and there are exactly $n$ extreme rank functions $\rk_{R/\mf^n,j}$ on $\mathbb P(R/\mf^n)$, defined by 
$$
  \rk_{R/\mf^n,j}(p^i+\mf^n) = \begin{cases}
      \frac{j-i}{j} & \textrm{if}~ i\leq j \\
                                 0 & \textrm{otherwise}.
  \end{cases}
$$
Now, fix $1\le j\le n$. Any element $q+\mf^n\in R/\mf^n$ gives rise to an endomorphism of the left $\DC$-module $R/\mf^j$ given by right multiplication by $q$. If $\deg(p) = l$, then $R/\mf^j$ has $\DC$-dimension $jl$ as noticed before and hence, fixing any basis of $R/\mf^j$, we have a ring homomorphism $\psi: R/\mf^n\rightarrow \End_{\DC}(R/\mf^j)\cong \Mat_{jl}(\DC)$ (recall that the endomorphisms act on the right). Therefore, we can define rank functions $\rk_{R/\mf^n,j}' = \psi^{\sharp}(\frac{1}{jl}\rk_{\DC})$ on $R/\mf^n$ and, although the latter isomorphism depends on the choice of the basis, the rank does not, since it is invariant under multiplication by invertible matrices. In particular, the image of $p^i+\mf^n$ with respect to the basis
$$
\{1+\mf^j,\dots,t^{l-1}+\mf^j, p+\mf^j,\dots,t^{l-1}p+\mf^j,\dots,p^{j-1}+\mf^j,\dots,t^{l-1}p^{j-1}+\mf^j \}
$$
is the $jl\times jl$ matrix of normalized rank $\frac{j-i}{j}$ 
$$
 \begin{pmatrix}
    0 & I_{(j-i)l}\\
    0 & 0
 \end{pmatrix}
$$
when $i\le j$, and the zero matrix otherwise. Therefore, necessarily $\rk_{R/\mf^n,j} = \rk_{R/\mf^n,j}'$ and the extreme ranks are regular. Since $\mathbb P_{\reg}(R/\mf^n)$ is convex, every rank is regular.

Finally, since $Z(R)$ is also a Laurent polynomial ring and $\mf_Z^n = \mf^n\cap Z(R)$, we have the same description of $\mathbb P(Z(R)/\mf_Z^n)$, i.e., every rank function is determined by its values on $p^i+\mf_Z^n$ and there are $n$ extreme points $\rk_{Z(R)/\mf_Z^n,j}$ defined as above on these elements. Since $\varphi_n$ sends $p^i+\mf_Z^n$ to $p^i+\mf^n$, the ranks $\varphi_n^{\sharp}(\rk_{R/\mf^n,j})$ and $\rk_{Z(R)/\mf_Z^n,j}$ give them the same value, and hence they are equal. Since the extreme points go to the extreme points and $\varphi_n^{\sharp}$ preserves convex combinations, this finishes the proof.
\end{proof}

As a consequence of the previous classification, we deduce the following result.

\begin{coro} \label{coro:quotient_by_twosided}
  Let $I$ be a non-zero proper two-sided ideal of $R$. If $I= Rp$ with $p\in S$ with non-zero constant term and $p = up_1^{k_1}\dots p_n^{k_n}$ is a factorization of $p$ into non-associate irreducibles of $S$, then every rank function on $R/I$ is determined by its values on $p_i^{j}+I$ for $i=1,\dots,n$, $j=1,\dots,k_i$.
\end{coro}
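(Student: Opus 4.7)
The plan is to reduce the problem to what was already established in \cref{prop:directproduct} and \cref{prop:quotient_by_maximal} via the Chinese Remainder Theorem. By \cref{lemm:ideals_Laurent}(2.), the ideals $\mf_i:=Rp_i$ are distinct maximal two-sided ideals of $R$, and by \cref{lemm:ideals_Laurent}(3.) we have $I=\mf_1^{k_1}\cap\cdots\cap\mf_n^{k_n}$. A brief check (expanding $1=a+b$ to a sufficiently high power, with $a\in\mf_i$, $b\in\mf_j$, and using that $\mf_i$ and $\mf_j$ are two-sided) shows that $\mf_i^{k_i}+\mf_j^{k_j}=R$ for $i\ne j$. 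Hence CRT yields a ring isomorphism $R/I\cong\prod_i R/\mf_i^{k_i}$ which identifies the natural quotient maps $\pi_i:R/I\to R/\mf_i^{k_i}$ with the projections onto the factors.

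Iterating \cref{prop:directproduct}, any $\rk\in\PP(R/I)$ admits a unique expression $\rk=\sum_{i=1}^n\lambda_i\pi_i^\sharp(\rk_i)$, where $\lambda_i\ge 0$ with $\sum_i\lambda_i=1$, and for each $\lambda_i>0$ the summand $\rk_i\in\PP(R/\mf_i^{k_i})$ is uniquely determined. Since $p_i\notin \mf_l$ for $l\ne i$ (the $p_j$ are pairwise non-associate irreducibles in $S$, cf.~\cref{lemm:ideals_Laurent}(2.)), the element $p_i+\mf_l^{k_l}$ is a unit in $R/\mf_l^{k_l}$, so $\rk_l(p_i^j+\mf_l^{k_l})=1$ for every $j\ge 1$ and every $l\ne i$. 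Using in addition that $p_i^{k_i}\in\mf_i^{k_i}$, we obtain
$$\rk(p_i^{k_i}+I)=\sum_{l\ne i}\lambda_l=1-\lambda_i,$$
which recovers $\lambda_i$; and for $1\le j\le k_i-1$ the analogous identity $\rk(p_i^j+I)=\lambda_i\rk_i(p_i^j+\mf_i^{k_i})+(1-\lambda_i)$ determines $\rk_i(p_i^j+\mf_i^{k_i})$ from $\rk(p_i^j+I)$ and $\lambda_i$, whenever $\lambda_i>0$.

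To close the argument, note that (as already observed in the proof of \cref{prop:quotient_by_maximal}) $R/\mf_i^{k_i}$ is a left artinian primary ring whose Jacobson radical is generated by the central element $p_i+\mf_i^{k_i}$, of nilpotency order exactly $k_i$. Therefore \cref{coro:primary_art} guarantees that $\rk_i$ is already determined by its values on $p_i^j+\mf_i^{k_i}$ for $1\le j\le k_i-1$, and combined with the previous step this means that $\rk$ itself is determined by the values $\rk(p_i^j+I)$ for $1\le i\le n$, $1\le j\le k_i$. The only slightly delicate point is the noncommutative CRT step in the first paragraph, since the usual Bezout manipulations do not transfer verbatim; but it follows routinely from the two-sidedness of the $\mf_i$.
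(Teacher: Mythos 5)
Your proof is correct and follows essentially the same route as the paper: decompose $I=\bigcap_i\mf_i^{k_i}$, apply the Chinese Remainder Theorem and \cref{prop:directproduct} to write the rank as a unique convex combination of ranks on the factors $R/\mf_i^{k_i}$, recover the coefficients and the values of each $\rk_i$ from $\rk(p_i^j+I)$ using that $p_i$ becomes a unit in the other factors, and finish with \cref{coro:primary_art}. The only difference is that you spell out the comaximality of the $\mf_i^{k_i}$, which the paper takes for granted.
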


\begin{proof}
  We noticed during the proof of \cref{prop:quotient_by_maximal} that the statement holds when $I$ is a power of a maximal ideal. For the general case, as in the proof of  \cref{lemm:ideals_Laurent}(4.), $I = \cap_i \mf_i^{k_i}$, where $\mf_i = Rp_i$ are different maximal ideals of $R$. Since powers of different maximal two-sided ideals are comaximal, we have a ring isomorphism $\varphi:R/I\to \prod_i R/\mf_i^{k_i}$ by the Chinese Remainder Theorem. Take a rank function $\rk \in \mathbb P(R/I)$, and assume that $\rk = \varphi^{\sharp}(\rk')$ for $\rk'\in \mathbb P(\prod_i R/\mf_i^{k_i})$.
  
  In view of \cref{prop:directproduct}, if $\pi_i:\prod_i R/\mf_i^{k_i}\to R/\mf_i^{k_i}$ denotes the $i$-th projection, $\rk'$ can be uniquely written as a convex combination
  $$
   \rk' = \lambda_1\pi_1^{\sharp}(\rk_1)+\dots+ \lambda_n\pi_n^{\sharp}(\rk_n)
  $$
  for some $\rk_i\in \mathbb P(R/\mf_i^{k_i})$. Now, by comaximality,  $p_i+\mf_s^{k_s}$ is a unit in $R/\mf_s^{k_s}$ for every $s\ne i$, and therefore $\rk_s(p_i^j+\mf_s^{k_s}) = 1$ for every $s\ne i$ and every $j\ge 1$. Thus,
  $$
  \rk(p_i^j+I) = \rk'((p_i^j+\mf_s^{k_s})_s) = \lambda_i\rk_i(p_i^j+\mf_i^{k_i})+(1-\lambda_i).
  $$
   As a consequence, if we know the values of $\rk$ on the previous elements, then on the one hand we can compute the coefficients $\lambda_i$ via
  $$
   \rk(p_i^{k_i}+I) = \lambda_i\rk_i(p_i^{k_i}+\mf_i^{k_i})+(1-\lambda_i) = 1-\lambda_i
  $$
  and, on the other hand, for every $\lambda_i>0$, we can compute the values of $\rk_i$ on $p_i^j+\mf_i^{k_i}$ for $j=1,\dots,k_i-1$. As in the proof of \cref{prop:quotient_by_maximal}, this information determines $\rk_i$ uniquely, so we have completely determined $\rk$.
\end{proof}

We have now enough information to determine $\mathbb P(R)$ when $\tau$ has finite inner order. Let $\mf$ be any maximal two-sided ideal of $R$. The ring homomorphisms $\pi_{\mf, n}: R\to R/\mf^n$ for $n\ge 1$ allow us to define rank functions on $R$. In particular, if $\mf = Rp$ for some irreducible $p\in S$ with non-zero constant term, then for any positive integer $k$ we have a rank function $\rk_{\mf,k}$, which in the notation of \cref{prop:quotient_by_maximal} can be written as $\pi_{\mf, k}^{\sharp}(\rk_{R/\mf^k,k})$, satisfying
$$
  \rk_{\mf,k}(p^i) = \begin{cases}
      \frac{k-i}{k} & \textrm{if}~ i\leq k \\
                                 0 & \textrm{otherwise}.
  \end{cases}
$$
Moreover, if $q\in S$ is irreducible with non-zero constant term and not associated to $p$, then $\nf = Rq$ defines a different maximal two-sided ideal by \cref{lemm:ideals_Laurent}(2.), and hence $\nf + \mf^n = R$ for every $n \ge 1$. Thus, $q$ becomes a unit in every quotient $R/\mf^n$ and therefore $\rk_{\mf,k}(q^i) = 1$.
In terms of the associated Sylvester module rank functions $\dim_{\mf,k}$ we have then for a two-sided maximal ideal $\nf$, 
$$
\dim_{\mathfrak{m},k}(R/\nf^i) = \begin{cases}
\frac{i}{k} & \textrm{if}~ \nf = \mf ~\textrm{and}~ i\leq k \\
1 & \textrm{if}~ \nf = \mf ~\textrm{and}~ i > k \\
0 & \textrm{if}~ \nf \neq \mf
\end{cases}
$$
We can also define the rank function $\rk_0$ coming from its Ore quotient ring, whose associated module rank function $\dim_0$ satisfies $\dim_0(R/\nf^i) = 0$ for every maximal two-sided ideal $\nf$ and $i\ge 1$. 
We are going to prove that, in analogy to the case of Dedekind domains, these are the extreme rank functions on $R$. For this, we note first the following analog of \cref{lemm:countably_many_maximals_comm}, which is proved similarly invoking \cref{lemm:ideals_Laurent}(3.).

\begin{lemm} \label{lemm:countably_many_maximals_Laurent}
 Let $\dim$ be a Sylvester module rank function on $R=\DC[t^{\pm 1};\tau]$. There exist only countably many maximal two-sided ideals $\mf$ of $R$ such that $\dim(R/\mf^k)>0$ for some $k\ge 1$. 
\end{lemm}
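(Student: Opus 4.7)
The plan is to mimic the argument used for \cref{lemm:countably_many_maximals_comm}, replacing the commutative tools (uniqueness of primary decomposition, standard CRT) with \cref{lemm:ideals_Laurent}(3.) and the version of the Chinese Remainder Theorem for pairwise comaximal two-sided ideals in a general ring.

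Concretely, for each pair of positive integers $k, n$ I would set
$$
S_n^{(k)} = \{\mf \text{ maximal two-sided ideal of } R : \dim(R/\mf^k) > 1/n\}
$$
and prove that each $S_n^{(k)}$ is finite. Suppose not; pick $m > n$ distinct maximal two-sided ideals $\mf_1, \dots, \mf_m \in S_n^{(k)}$. Since any two distinct maximal two-sided ideals satisfy $\mf_i + \mf_j = R$, the same holds for their $k$-th powers $\mf_i^k + \mf_j^k = R$, so the ideals $\mf_1^k, \dots, \mf_m^k$ are pairwise comaximal. The Chinese Remainder Theorem for two-sided ideals then yields an isomorphism of left $R$-modules
$$
R\big/\bigl(\mf_1^k \cdots \mf_m^k\bigr) \;\cong\; \prod_{i=1}^m R/\mf_i^k.
$$
Combined with \cref{lemm:ideals_Laurent}(3.), which identifies $\mf_1^k \cdots \mf_m^k$ with $\bigcap_i \mf_i^k$, and with (SMod2), this gives
$$
\dim\Bigl(R\big/\bigcap_i \mf_i^k\Bigr) \;=\; \sum_{i=1}^m \dim(R/\mf_i^k) \;>\; \frac{m}{n} \;>\; 1.
$$
This contradicts the fact that any cyclic left $R$-module has $\dim \le \dim(R) = 1$ by (SMod1) and (SMod3). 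Hence each $S_n^{(k)}$ is finite, and the set $S_0 = \bigcup_{k,n \ge 1} S_n^{(k)}$ is a countable union of finite sets and therefore countable. Since $S_0$ contains exactly the maximal two-sided ideals $\mf$ with $\dim(R/\mf^k) > 0$ for some $k$, the lemma follows.

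The only ingredients beyond the parallel commutative proof are (a) that distinct maximal two-sided ideals of $R$ are comaximal (immediate from maximality) and (b) the CRT-style decomposition of $R/\mf_1^k \cdots \mf_m^k$, which is formally the same as in the commutative case once pairwise comaximality is established. I do not foresee any serious obstacle; the content of the argument is essentially combinatorial, and \cref{lemm:ideals_Laurent}(3.) is precisely the structural input that makes the proof go through verbatim in the non-commutative setting.
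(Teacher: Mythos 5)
Your proposal is correct and is essentially the paper's own argument: the paper proves this lemma by saying it follows "similarly" to \cref{lemm:countably_many_maximals_comm}, invoking \cref{lemm:ideals_Laurent}(3.) and the Chinese Remainder Theorem for pairwise comaximal two-sided ideals, exactly as you do. The one step you assert without proof --- that comaximality of distinct maximal two-sided ideals passes to their powers --- is the standard fact $(\mf_i+\mf_j)^{2k-1}\subseteq \mf_i^k+\mf_j^k$, which the paper also takes for granted.
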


Secondly, from \cref{lemm:ideals_Laurent}(1.) we know that $R$ is almost simple, and we observed in \cref{sect:simple_noetherian} that $\lkdim(R) = 1$. Hence, we can use \cref{prop:almost_simple} to deduce in the next proposition that the rank of a finite $\DC$-dimensional $R$-module $M$ is determined by its $Z(R)$-torsion submodule
 $$
 t_{Z(R)}(M) = \{m\in M: cm = 0 \textrm{ for some non-zero } c\in Z(R) \}.
 $$
Note that in the language of \cite{GW2004}*{page~83} this would be called $Z(R)\backslash\{0\}$-torsion submodule, and it is actually a submodule because $R$ is a domain and $Z(R)$ is commutative.

\begin{prop} \label{prop:central_torsion_sub}
If $M$ is a finitely generated $R$-module with $\dim_\DC(M)<\infty$, then, for any Sylvester module rank function $\dim \in \mathbb P(R)$, we have $\dim(M) = \dim(t_{Z(R)}(M))$. 
\end{prop}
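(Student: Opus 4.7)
The plan is to show that the short exact sequence $0\to t_{Z(R)}(M)\to M\to M/t_{Z(R)}(M)\to 0$ splits as a sequence of left $R$-modules and then combine the additivity in (SMod2) with the vanishing of $\dim(M/t_{Z(R)}(M))$ supplied by \cref{prop:almost_simple}.

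First I would observe that every $R$-submodule of $M$ is a $\DC$-subspace, so the hypothesis $\dim_{\DC} M<\infty$ forces $M$---and therefore also $t:=t_{Z(R)}(M)$ and $M/t$---to have finite length as $R$-modules. A standard argument shows that $M/t$ is $Z(R)$-torsionfree: if $c\bar m=0$ for some $c\in Z(R)\setminus\{0\}$ and $m\in M$, then $cm\in t$, so $c'cm=0$ for some non-zero $c'\in Z(R)$, forcing $m\in t$. By \cref{lemm:Laurent_center} together with \cref{lemm:ideals_Laurent}(1.), $R$ is left noetherian and almost simple with $\lkdim(R)\ge 1$ (as observed at the start of \cref{sect:simple_noetherian}). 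Hence \cref{prop:almost_simple} gives $\dim(M/t)=0$.

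Next I would produce a single non-zero central annihilator $c$ of $t$. Since $R$ is noetherian, $t$ is finitely generated, and each generator is killed by some non-zero element of the commutative domain $Z(R)$ (a field or a Laurent polynomial ring over a field, by \cref{lemm:Laurent_center}); the product $c\in Z(R)\setminus\{0\}$ then annihilates all of $t$. Left multiplication by $c$ is an $R$-linear endomorphism of $M$ whose kernel is exactly $t$: one inclusion holds by choice of $c$, the other is the definition of $t_{Z(R)}(M)$. On the finite-length $Z(R)$-torsionfree module $M/t$, multiplication by $c$ is injective (by torsionfreeness) and hence surjective (by finite length), so it is an automorphism of $M/t$.

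From this I would deduce the internal direct sum $M=cM\oplus t$: surjectivity of $c$ on $M/t$ yields $M=cM+t$, while if $x=cy\in t$ then $c^{2}y=cx=0$ forces $y\in t$ and thus $x=cy=0$, so $cM\cap t=0$. The first isomorphism theorem applied to $c\colon M\to M$ identifies $cM\cong M/t$, so $M\cong (M/t)\oplus t$ as left $R$-modules, and (SMod2) delivers
$$\dim(M)=\dim(M/t)+\dim(t)=0+\dim(t_{Z(R)}(M)),$$
which is the desired equality. The only subtle point is producing a single central element $c$ that both annihilates $t$ and acts as an automorphism on $M/t$; once this is in hand, the splitting and the conclusion are formal.
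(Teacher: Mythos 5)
Your proof is correct and follows essentially the same route as the paper: both produce a single non-zero central annihilator of the torsion submodule, split off a $Z(R)$-torsionfree complement ($cM$ in your notation, $pM$ in the paper's), and kill its rank with \cref{prop:almost_simple}. The only cosmetic difference is that the paper verifies $M=t_{Z(R)}(M)\oplus pM$ by a $\dim_{\DC}$ count, whereas you obtain $M=cM+t_{Z(R)}(M)$ from surjectivity of multiplication by $c$ on the finite-length torsionfree quotient $M/t_{Z(R)}(M)$; both are valid.
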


\begin{proof}

Denote $N = t_{Z(R)}(M)$ and let $\dim$ be a Sylvester module rank function on $R$. If $N = 0$, then $M$ is $Z(R)$-torsionfree of finite length (since it is finite $\DC$-dimensional) and hence $\dim(M) = 0 = \dim(N)$ by \cref{prop:almost_simple}. Assume now that $N$ is non-zero. Since $R$ is noetherian, $N$ is also finitely generated, say, by $\{m_1,\dots,m_n\}$. For every $k$, we can take a non-zero $p_{m_k}\in Z(R)\cap \DC[t;\tau]$ with non-zero constant term that annihilates $m_k$ and observe that, if $p = \prod_k p_{m_k}$, then $N = \{m\in M: pm = 0\}$. Since $p\in Z(R)$, $pM$ is a submodule of $M$, and we claim that $M = N\oplus pM$.

Indeed, their intersection $N\cap pM$ is trivial, because if $pm\in N$, then $p^2m=0$, from where $m\in N$ (by definition, since $0\ne p^2\in Z(R)$), and therefore $pm=0$. Now, from the exact sequence (of $\DC$-modules) $0\to N \to M \to pM \to 0$, where the homomorphism $M\to pM$ is given by left multiplication by $p$, we obtain that $\dim_\DC(M) = \dim_\DC(N)+\dim_\DC(pM)$, and hence, since from the second isomorphism theorem and additivity of $\dim_{\DC}$,
$$ \dim_\DC(N+pM)+\dim_\DC(N\cap pM) = \dim_{\DC}(N)+\dim_{\DC}(pM) = \dim_{\DC}(M),$$
we deduce that $M = N \oplus pM$. Since $pM$ is $Z(R)$-torsionfree of finite length, \cref{prop:almost_simple} gives that $\dim(pM) = 0$, and therefore $\dim(M) = \dim(N)$ by (SMod2).
\end{proof}

\begin{theo} \label{theo:ranks_Laurent}
The extreme points on $\mathbb P(R)$ are precisely the rank functions $\dim_{\mf,k}$ and $\dim_0$ defined above, and any other rank function can be uniquely expressed as a (possibly infinite) convex combination of them. Thus, $\mathbb P(R) = \mathbb P_{\reg}(R)$. Moreover, the inclusion map $\iota: Z(R)\to R$ gives a bijection  $\iota^{\sharp}: \mathbb P(R)\to \mathbb P(Z(R))$.
\end{theo}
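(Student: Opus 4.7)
If $\tau$ has infinite inner order the statement is covered by \cref{coro:simple_Laurent}, so I assume $\tau$ has finite inner order. Then by \cref{lemm:Laurent_center}, $Z(R) = K^{\tau}[s^{\pm 1}]$ is an ordinary Laurent polynomial ring over a field, hence a Dedekind domain to which \cref{theo:Dedekind} applies. A preliminary calculation --- using the module-level formula $\iota^{\sharp}(\dim)(N) = \dim(R \otimes_{Z(R)} N)$ and the identity $\nf^{i} = R\nf_{Z}^{i}$ under the bijection $\mf \leftrightarrow \mf_{Z}$ from \cref{lemm:ideals_Laurent} --- shows that $\iota^{\sharp}(\dim_{\mf,k}^{R}) = \dim_{\mf_{Z},k}^{Z(R)}$ and $\iota^{\sharp}(\dim_{0}^{R}) = \dim_{0}^{Z(R)}$, i.e.\ the proposed extreme ranks on $R$ map to the extreme ranks on $Z(R)$ given by \cref{theo:Dedekind}.

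Given $\dim \in \mathbb{P}(R)$, I would apply \cref{theo:Dedekind} to $\iota^{\sharp}(\dim)$ to obtain a unique decomposition $\iota^{\sharp}(\dim) = c_{0}\dim_{0}^{Z(R)} + \sum c_{\mf_{Z},k}\dim_{\mf_{Z},k}^{Z(R)}$, and define $\dim^{*} := c_{0}\dim_{0}^{R} + \sum c_{\mf,k}\dim_{\mf,k}^{R}$, a well-defined (possibly infinite) convex combination by the remark after \cref{theo:Dedekind}. By convex-linearity of $\iota^{\sharp}$ and the preliminary calculation, $\iota^{\sharp}(\dim^{*}) = \iota^{\sharp}(\dim)$. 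Thus, once I establish that $\iota^{\sharp}$ is injective, the equality $\dim = \dim^{*}$ simultaneously yields the desired unique decomposition, identifies the extreme points of $\mathbb{P}(R)$ (by the uniqueness of coefficients), and gives the bijection claim.

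The main obstacle is the injectivity of $\iota^{\sharp}$. The plan is to exploit that $R = \DC[t^{\pm 1};\tau]$ is a noncommutative principal ideal domain, inherited from the Euclidean structure of $\DC[t;\tau]$ via Ore localization at $t$. Hence every matrix $A$ over $R$ admits a diagonal reduction $A = UDV$ with $U, V$ invertible and $D$ pseudo-diagonal with entries $d_{1}, \dots, d_{r}, 0, \dots, 0$, so by (SMat2) and (SMat3), $\rk(A) = \sum_{i}\rk(d_{i})$, reducing $\rk$ to its values on elements of $R$. For $a \ne 0$, the cyclic module $R/Ra$ is finite $\DC$-dimensional (after absorbing a power of $t$, by the division algorithm in $\DC[t;\tau]$), and \cref{prop:central_torsion_sub} reduces $\dim(R/Ra)$ to $\dim(t_{Z(R)}(R/Ra))$, a finitely generated $Z(R)$-torsion $R$-module. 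Any such torsion module is annihilated by some $p \in S$, and the Chinese Remainder Theorem (\cref{lemm:ideals_Laurent}) together with \cref{prop:directproduct} iterated splits the rank on it across the quotients $R/\mf_{j}^{k_{j}}$ corresponding to the prime factorization of $p$ in $S$; finally \cref{prop:quotient_by_maximal} identifies the rank on each $R/\mf_{j}^{k_{j}}$-module with one on $Z(R)/\mf_{Z,j}^{k_{j}}$, hence with data determined by $\iota^{\sharp}(\dim)$. This chain forces $\rk(A)$ to depend only on $\iota^{\sharp}(\dim)$, establishing injectivity.

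The remaining assertion $\mathbb{P}(R) = \mathbb{P}_{\reg}(R)$ then follows by convexity, since each extreme rank is regular: $\dim_{0}^{R}$ factors through the Ore division ring $\mathcal{Q}(R)$, and, as in the proof of \cref{prop:quotient_by_maximal}, each $\dim_{\mf,k}^{R}$ factors through a matrix ring $\Mat_{jl}(\DC)$ over the division ring $\DC$.
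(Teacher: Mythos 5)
Your proposal is correct, and its technical core coincides with the paper's: both rest on the same chain of reductions (diagonal reduction of matrices over the principal ideal domain $R$ down to elements, \cref{prop:central_torsion_sub} to pass to the $Z(R)$-torsion submodule, then the Chinese Remainder Theorem with \cref{prop:directproduct} and \cref{coro:quotient_by_twosided} to reduce to quotients by powers of maximal two-sided ideals). The difference is architectural. The paper computes the candidate coefficients $c_{\mf,k}=k(b_{\mf,k-1}-b_{\mf,k})$ directly on $R$ from the values $\dim(R/\mf^k)$ (via \cref{lemm:positive_coefficients} and the argument of \cref{theo:Dedekind}), verifies $\dim=\dim'$ through the chain above, and only at the end identifies $\iota^{\sharp}$ as a bijection onto $\PP(Z(R))$. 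You instead package the chain as injectivity of $\iota^{\sharp}$, check up front that $\iota^{\sharp}(\dim_{\mf,k})=\dim_{\mf_Z,k}$ and $\iota^{\sharp}(\dim_0)=\dim_0$, and import the whole classification from \cref{theo:Dedekind} applied to the Dedekind domain $Z(R)$; this is slightly more economical, since the coefficient extraction and the convergence of the infinite convex combination are handled once, on $Z(R)$, rather than redone on $R$. Two points should be spelled out to make your injectivity argument airtight: (i) the case $\dim(R/I)=0$ needs separate (trivial) treatment, since the normalization $\frac{1}{d_I}\dim$ is then unavailable, but $\dim$ vanishes on every finitely generated $R/I$-module by (SMod2)--(SMod3) and a surjection from $(R/I)^k$; and (ii) the values that determine the normalized rank on $R/I$ by \cref{coro:quotient_by_twosided}, namely those on $p_i^{j}+I$, correspond to the modules $R/(Rp_i^{j}+I)\cong R/\nf_i^{\min(j,k_i)}\cong R\otimes_{Z(R)}\bigl(Z(R)/\nf_{Z,i}^{\min(j,k_i)}\bigr)$ and hence are genuinely read off from $\iota^{\sharp}(\dim)$ --- this is the observation that closes the loop and deserves an explicit line. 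With those additions the argument is complete.
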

\begin{proof}
 Consider a Sylvester module rank function $\dim$ on $R$. Let $S_0$ be the set of two-sided maximal ideals $\mf$ for which there exists $k$ with $\dim(R/\mf^k) >0$, which is countable by \cref{lemm:countably_many_maximals_Laurent}. We are going to show that there exist non-negative real numbers $c_0, c_{\mf, k}$ satisfying
 $$
\dim = c_0\dim_0 + \sum_{\mf\in S_0} \sum_{k\in \mathbb{Z^+}} c_{\mf,k} \dim_{\mf,k} \textrm{~~ and ~~} c_0+\sum_{\mf\in S_0}\sum_{k\in \ZZ^+} c_{\mf,k} = 1.
$$
Set $b_{\mf,0}= \dim(R/\mf)$ and $b_{\mf,k}= \dim(R/\mf^{k+1})-\dim(R/\mf^k)$ for every $k\ge 1$. Since $\mf = Rp$ for some irreducible $p\in S = Z(R)\cap \DC[t;\tau]$ and $\mf^k = Rp^k$, \cref{lemm:positive_coefficients} implies that $b_{\mf,k}\ge b_{\mf,{k+1}}$ for every $k\ge 0$. Thus, if we impose equality of both expressions on the modules $R/\nf^i$ for maximal two-sided ideals $\nf$ and $i\ge 1$, we can reason as in \cref{theo:Dedekind} to show that the only possible solution is given by the non-negative coefficients 
$$
  c_{\mf,k} = k(b_{\mf,k-1}-b_{\mf,k}), k\ge 1
$$
and that $c_0=1-\sum_{\mf\in S_0}\sum_{k\in \ZZ^+} c_{\mf,k}$ is well-defined. Nevertheless, we do not know in principle that a rank function on $R$ is already determined by its values on those modules, so we still need to check that this is sufficient to claim equality. Define
$$
 \dim' := c_0\dim_0 + \sum_{\mf\in S_0} \sum_{k\in \mathbb{Z^+}} c_{\mf,k} \dim_{\mf,k}
$$
We have just seen that $\dim'$ is a Sylvester module rank function on $R$ that coincides with $\dim$ on the modules $R/\nf^k$ for every maximal two-sided ideal $\nf$ and $k\ge 1$. 

Take now any non-zero proper two-sided ideal $I$ such that $d_I = \dim(R/I) > 0$, and notice that $\dim'(R/I) = d_I$ (by using \cref{lemm:ideals_Laurent}(4.), the Chinese Remainder Theorem and that $\dim$ and $\dim'$ coincide on quotients by powers of maximal ideals). Now, $\frac{1}{d_I}\dim$ and $\frac{1}{d_I}\dim'$ define Sylvester module rank functions on $R/I$. Indeed, any finitely presented $R/I$-module is a finitely generated $R$-module with the natural operation, and hence finitely presented because $R$ is noetherian.
Thus, they satisfy (SMod1)-(SMod3) because $\dim$ and $\dim'$ do. But if $I = Rq$ for $q\in S$ with non-zero constant term and $q = uq_1^{k_1}\dots q_n^{k_n}$ for some non-associate irreducibles $q_i\in S$ and a unit $u\in S$, then rank functions on $R/I$ are determined by their value on $q_i^j+I$ for $i=1,\dots,n$, $j=1,\dots, k_i$ by \cref{coro:quotient_by_twosided}. Equivalently, if $\nf_i = Rq_i $, they are determined by their value on the modules $(R/I)/(R/I(q_i^j+I)) = (R/I)/(\nf_i^j/I) \cong R/\nf_i^j$ for $1\le i\le n$, $1\le j\le k_i$.
Therefore, by construction, $\frac{1}{d_I}\dim = \frac{1}{d_I}\dim'$ as rank functions on $R/I$.

Let $M$ be a finitely generated left $R$-module with $\dim_\DC(M)<\infty$. By \cref{prop:central_torsion_sub}, if $N = t_{Z(R)}(M)$, then $\dim (M) = \dim (N)$ and $\dim'(M) = \dim'(N)$. Thus, if $N = 0$ then $\dim(M) = 0 = \dim'(M)$. If $N$ is non-zero, then as in the proof of \cref{prop:central_torsion_sub} there exists $q\in S$ with non-zero constant term such that $N = \{m\in M: qm = 0\}$ and $M = N\oplus qM$. Define $I = Rq$, and note that $N$ is a finitely presented $R/I$-module.
If $d_I=0$ and $N$ is a $k$-generated $R$-module, we obtain from the surjective $R$-homomorphism $(R/I)^k\rightarrow N$, (SMod2) and (SMod3)
that $\dim(N) = \dim'(N) = 0$. Otherwise, the preceding paragraph shows that $\dim(N) = d_I (\frac{1}{d_I}\dim(N)) = d_I (\frac{1}{d_I}\dim'(N)) = \dim'(N)$. Therefore, $\dim(M) = \dim'(M)$.

Finally, take any matrix $A$ over $R$. Since adding rows and columns of zeros do not change the rank, we can assume that $A$ is $n\times n$. Take $k$ such that $A(I_nt^{k})$ is a matrix over $\DC[t;\tau]$. Thus, there exist $n\times n$ invertible matrices $P$, $Q$ over $\DC[t;\tau]$ respectively, and a diagonal matrix $D$, such that $A = PDQ(I_nt^{-k})$. (cf. \cite{BK2000}*{Proposition~3.2.8 \& 3.3.2}). Thus, any Sylvester matrix rank function on $R$ is determined by its values on elements, or equivalently any Sylvester module rank function is determined by its values on the quotients $R/Rp$ for $p\in R$, which are either free if $p=0$ or have finite dimension over $\DC$. Since we have seen that $\dim$ and $\dim'$ coincide on these modules, $\dim = \dim'$.

Since $Z(R)$ is a Laurent polynomial ring over a field, we have the same classification of rank functions on $Z(R)$. In view of \cref{lemm:ideals_Laurent}(2.), for every maximal two-sided ideal $\mf$ and every positive integer $k$, we have an extreme point $\rk_{\mf_Z, k}$ in $\mathbb P(Z(R))$ where $\mf_Z = \mf\cap Z(R)$. Moreover, if we denote by $\pi_{\mf_Z,k}: Z(R)\to Z(R)/\mf_Z^k$ the canonical homomorphism, then from the definition of the extreme rank functions and using \cref{prop:quotient_by_maximal}, we have
$$
  \rk_{\mf_Z,k} = \pi_{\mf_Z,k}^{\sharp}(\rk_{Z(R)/\mf_Z^k,k}) = \pi_{\mf_Z,k}^{\sharp}\varphi_k^{\sharp}(\rk_{R/\mf^k,k}) = \iota^{\sharp}\pi_{\mf, k}^\sharp(\rk_{R/\mf^k,k}) = \iota^{\sharp}(\rk_{\mf,k})
$$
The other extreme point in $\mathbb P(Z(R))$ is $\rk_{Z,0}$, the one induced by the unique rank function on its field of fractions $\mathcal Q(Z(R))$. By uniqueness, the rank function on $\mathcal Q(Z(R))$ is the restriction of the unique rank function on $\mathcal Q_l(R)$, and hence $\rk_{Z,0} = \iota^{\sharp}(\rk_0)$. Thus, the map $\iota^{\sharp}$ sends the extreme points to the extreme points, and since it preserves (infinite) convex combinations it is bijective. This finishes the proof.
\end{proof}

Notice that the same analysis done for $\DC[t^{\pm 1};\tau]$ works for ordinary polynomials $\DC[t]$ over a division ring $\DC$, since $\DC[t]$ enjoys the same properties that we have used for skew Laurent polynomials. Namely, $\DC[t]$ is a noetherian domain with $\lkdim(\DC[t]) = 1$ in which every two-sided ideal is generated by an element in $Z(\DC[t]) = Z(\DC)[t]$ (cf. \cite{GW2004}*{Theorem~1.9, Theorem~15.17 \& Proposition~17.1(c)}). From the latter property we can reason as in \cref{lemm:ideals_Laurent} to directly see that, if $K = Z(\DC)$, there is a bijection between maximal two-sided ideals in $\DC[t]$ and maximal ideals in $K[t]$, sending $\mf = Rp$ for some non-zero $p\in K[t]$ to $\mf_Z = \mf\cap K[t] = K[t]p$, and we can also deduce that every non-zero proper two-sided ideal can be written as intersection (equivalently, product) of powers of maximal ideals. With this, the proofs of
\cref{prop:quotient_by_maximal}, \cref{coro:quotient_by_twosided}, \cref{lemm:countably_many_maximals_Laurent} \cref{prop:central_torsion_sub} and \cref{theo:ranks_Laurent} apply with the corresponding changes to this case. Thus, we have the following partial answer to \cref{Question} for simple artinian rings.

\begin{prop} \label{polynomials}
  Let $R$ be a simple artinian ring. The inclusion $\iota: Z(R)[t]\to R[t]$ defines a bijection $\iota^{\sharp}: \mathbb P(R[t])\to \mathbb P(Z(R)[t])$. In particular, every Sylvester rank function on $Z(R)[t]$ can be uniquely extended to a Sylvester rank function on $R[t]$.
\end{prop}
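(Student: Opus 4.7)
The plan is to reduce the statement to the case already treated for $\DC[t]$ with $\DC$ a division ring, via Wedderburn--Artin together with Morita invariance of the space of rank functions. Since $R$ is simple artinian, there exist a division ring $\DC$ and an integer $n\ge 1$ with $R\cong \Mat_n(\DC)$, under which $Z(R)$ corresponds to $Z(\DC)$ sitting inside $\Mat_n(\DC)$ as scalar matrices. Passing to polynomials yields $R[t]\cong \Mat_n(\DC)[t]\cong \Mat_n(\DC[t])$ and $Z(R)[t]\cong Z(\DC)[t]$, and the inclusion $\iota\colon Z(R)[t]\hookrightarrow R[t]$ factors under these identifications as
$$
Z(\DC)[t] \;\xhookrightarrow{\;j\;}\; \DC[t] \;\xhookrightarrow{\;\Diag\;}\; \Mat_n(\DC[t]),
$$
where $j$ is the natural inclusion and $\Diag$ is the diagonal embedding.

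The heart of the proof is then to observe that each of these two maps induces a bijection on spaces of Sylvester rank functions. For the diagonal embedding, this is exactly the content of \cref{prop:Morita}, which tells us that $\Diag^{\sharp}\colon \mathbb P(\Mat_n(\DC[t]))\to \mathbb P(\DC[t])$ is a convex-linear bijection preserving extreme points (with inverse multiplication by $\tfrac{1}{n}$). For the inclusion $j$, the paragraph preceding the statement explicitly points out that the arguments used to establish \cref{theo:ranks_Laurent} go through verbatim (with only the obvious changes) for $\DC[t]$ in place of $\DC[t^{\pm 1};\tau]$, so its analogue already provides that $j^{\sharp}\colon \mathbb P(\DC[t])\to \mathbb P(Z(\DC)[t])$ is a bijection.

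Composing the two, we obtain
$$
\iota^{\sharp} \;=\; j^{\sharp}\circ\Diag^{\sharp}\colon \mathbb P(R[t])\longrightarrow \mathbb P(Z(R)[t]),
$$
a bijection, which is the first assertion. The ``in particular'' clause is then just a reformulation: surjectivity of $\iota^{\sharp}$ gives the existence of an extension of any $\rho\in\mathbb P(Z(R)[t])$ to a Sylvester rank function on $R[t]$, and injectivity gives its uniqueness. The only thing to verify by hand is that, under the fixed isomorphism $R\cong \Mat_n(\DC)$, the inclusion $\iota$ really agrees with $\Diag\circ j$, which is immediate from the fact that $Z(\Mat_n(\DC))$ consists precisely of the scalar matrices $\Diag(z,\dots,z)$ for $z\in Z(\DC)$.

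In short, there is no genuine new obstacle here: the work has already been done in \cref{prop:Morita} and in \cref{theo:ranks_Laurent} (transported to $\DC[t]$), and the main step is simply to recognize the diagonal factorization of $\iota$ so that both tools apply in succession.
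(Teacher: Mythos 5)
Your proposal is correct and follows essentially the same route as the paper: factor $\iota$ through $Z(\DC)[t]\hookrightarrow \DC[t]\hookrightarrow \Mat_n(\DC[t])$ via Wedderburn--Artin, then invoke \cref{prop:Morita} for the diagonal embedding and the $\DC[t]$-analogue of \cref{theo:ranks_Laurent} for the inclusion of the center. The only cosmetic difference is that the paper writes out the ring isomorphisms $Z(R)[t]\cong Z(\DC)[t]$ and $\Mat_n(\DC[t])\cong R[t]$ as explicit extra links in the chain of bijections, which you absorb into the identifications.
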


\begin{proof}
 Since $R$ is simple artinian, there exists a division ring $\DC$ and a positive integer $n$ such that $R\cong \Mat_n(\DC)$. The inclusion map $\iota$ factors through
 $$
   Z(R)[t]\to Z(\DC)[t] \hookrightarrow \DC[t] \to \Mat_n(\DC[t])\to R[t],
 $$
 where the first map is the extension of the isomorphism $Z(R)\cong Z(\DC)$, the third one is the diagonal embedding and the last one is induced by $\Mat_n(\DC[t])\cong \Mat_n(\DC)[t]$. Consequently, $\iota^{\sharp}$ factors as
 $$
  \mathbb P(R[t]) \to \mathbb P(\Mat_n(\DC[t])) \to \mathbb P(\DC[t]) \to   \mathbb P(Z(\DC)[t]) \to \mathbb P(Z(R)[t]).
 $$
The first and the last map are induced by ring isomorphisms and hence bijective. The second one is bijective by \cref{prop:Morita}, and the third one is bijective by the analog of \cref{theo:ranks_Laurent}.
\end{proof}

Unfortunately, we cannot expect the same result to hold for skew polynomial rings $\DC[t;\tau]$. For example, if $\tau$ has infinite inner order, then $Z(\DC[t;\tau]) = K^{\tau}$ for $K = Z(\DC)$, and thus $\mathbb P(Z(\DC[t;\tau])) = \{\dim_{K^{\tau}}\}$, while the natural maps $\DC[t;\tau]\rightarrow \mathcal Q_l(\DC[t;\tau])$ and $\DC[t;\tau]\rightarrow \DC$ define two different Sylvester rank functions on the skew  polynomial ring.

\begin{bibdiv}
\begin{biblist}

\bib{AC2019}{article}{
  label = {AC20},
  author = {Ara, P.},
  author = {Claramunt, J.}, 
  title = {Sylvester matrix rank functions on crossed products},
  journal = {Ergodic Theory Dynam. Systems},
 volume = {40},
  year = {2020},
  number = {11},
  pages = {2913--2946},
  note = {\doi{10.1017/etds.2019.37}},
}

\bib{AOP2002}{article}{
    label = {AOP02},
    author = {Ara, Pere},
    author = {O'Meara, Kevin C.}, 
    author = {Perera, Francesc},
    title = {Stable finiteness of group rings in arbitrary characteristic},
    journal = {Adv. Math.},
    volume = {170},
    year = {2002},
    number = {2},
    pages = {224--238},
    note = {\doi{10.1006/aima.2002.2075}},
}

\bib{BK2000}{book}{
    author = {Berrick, A. J.},
    author = {Keating, M. E.},
    title = {An introduction to rings and modules with {$K$}-theory in view},
    series = {Cambridge Studies in Advanced Mathematics},
    volume = {65},
    publisher = {Cambridge University Press, Cambridge},
    year = {2000},
}

\bib{Elek2017}{article}{
    author = {Elek, G\'{a}bor},
    title = {Infinite dimensional representations of finite dimensional algebras and amenability},
    journal = {Math. Ann.},
    volume = {369},
    year = {2017},
    number = {1-2},
    pages = {397--439},
    note = {\doi{10.1007/s00208-017-1552-0}},
}

\bib{Fisher1971}{article}{
    author = {Fisher, J. L.},
    title = {Embedding free algebras in skew fields},
    journal = {Proc. Amer. Math. Soc.},
    volume = {30},
    year = {1971},
    pages = {453--458},
    note = {\doi{10.2307/2037715}},
}

\bib{Goodearl1991}{book}{
    author = {Goodearl, K. R.},
    title = {von {N}eumann regular rings},
    edition = {2},
    publisher = {Robert E. Krieger Publishing Co., Inc., Malabar, FL},
    year = {1991},
}

\bib{GW2004}{book}{
    author = {Goodearl, K. R.},
    author = {Warfield, R. B., Jr.},
    title = {An introduction to noncommutative Noetherian rings},
    series = {London Mathematical Society Student Texts},
    publisher = {Cambridge University Press, Cambridge},
    volume = {61},
    edition = {2},
    year = {2004},
    note = {\doi{10.1017/CBO9780511841699}},
}

\bib{Hardy2008}{book}{
    AUTHOR = {Hardy, G. H.},
    TITLE = {A course of pure mathematics},
    EDITION = {Centenary edition},
    NOTE = {Reprint of the tenth (1952) edition with a foreword by T. W. K\"{o}rner. \doi{10.1017/CBO9780511989469}},
    PUBLISHER = {Cambridge University Press, Cambridge},
    YEAR = {2008},
    PAGES = {xx+509},
}

\bib{Jaikin1999}{article}{
    label = {Jai99},
    author = {Jaikin-Zapirain, A.},
    title = {Modules over crossed products},
    journal = {J. Algebra},
    volume = {215},
    year = {1999},
    number = {1},
    pages = {114--134},
    note = {\doi{10.1006/jabr.1998.7733}},
}

\bib{Jaikin2019A}{article}{
    label = {Jai19},
    author = {Jaikin-Zapirain, A.},
    title = {The base change in the {A}tiyah and the {L}\"{u}ck approximation conjectures},
    journal = {Geom. Funct. Anal.},
    volume = {29},
    year = {2019},
    number = {2},
    pages = {464--538},
    note = {\doi{10.1007/s00039-019-00487-3}},
}

\bib{Jaikin2019S}{article}{
    label = {Jai19S},
    author = {Jaikin-Zapirain, A.},
    title = {$L^2$-Betti numbers and their analogues in positive characteristic},
    book ={
     title = {Groups St Andrews 2017 in Birmingham},
     series = {London Math. Soc. Lecture Note Ser.},
     publisher = {Cambridge Univ. Press},
     volume = {455},
     year = {2019},
    },
    pages = {346--406},
    note = {\doi{10.1017/9781108692397.015}},
}

\bib{Jaikin2020A}{article}{
    label = {Jai20A},
    author = {Jaikin-Zapirain, A.},
    title = {Recognition of being fibered for compact 3--manifolds},
    journal = {Geom. Topol.},
    volume = {24},
    year = {2020},
    number = {1},
    pages = {409--420},
    note = {\doi{10.2140/gt.2020.24.409}},
}

\bib{Jaikin2020B}{article}{
 label = {Jai20B},
 author = {Jaikin-Zapirain, A.},
 title = {The universality of {H}ughes-free division rings},
 note = {Preprint available at \url{http://matematicas.uam.es/~andrei.jaikin/preprints/universal.pdf}},
 year = {2020},
}

\bib{Jaikin2019B}{article}{
    label = {Jai20C},
    author = {Jaikin-Zapirain, A.},
    title = {An explicit construction of the universal division ring of fractions of {$E\langle\langle x_1,\dots,x_d\rangle\rangle$}},
    journal = {J. Comb. Algebra},
    volume = {4},
    year = {2020},
    number = {4},
    pages = {369--395},
    note = {\doi{10.4171/jca/47}},
}

\bib{JL2020}{article}{
   label = {JL20},
   author= {Jaikin-Zapirain, A.},
   author = {L\'{o}pez-\'{A}lvarez, D.},
   title = {The strong {A}tiyah and {L}\"{u}ck approximation conjectures for one-relator groups},
   journal = {Math. Ann.},
   volume = {376},
   year = {2020},
   number = {3-4},
   pages = {1741--1793},
   note = {\doi{10.1007/s00208-019-01926-0}},
}

\bib{JiLi2020}{article}{
   label = {JiLi21},
   author = {Jiang, B.},
   author = {Li, H.},
   title = {Sylvester rank functions for amenable normal extensions},
   journal = {Journal of Functional Analysis},
   volume = {280},
   number={6},
   year = {2021},
   pages = {47pp.},
   note = {\doi{10.1016/j.jfa.2020.108913}},
}

\bib{Lam1999}{book}{
    author = {Lam, T. Y.},
    title = {Lectures on modules and rings},
    series = {Graduate Texts in Mathematics},
    volume = {189},
    publisher = {Springer-Verlag, New York},
    year = {1999},
    note = {\doi{10.1007/978-1-4612-0525-8}},
}

\bib{Li2019}{article}{
  label = {Li20},
  author = {Li, H.},
  title = {Bivariant and extended Sylvester rank functions},
  journal = {Journal of the London Mathematical Society},
  year = {2020},
  pages = {28 pp.},
  note = {\doi{10.1112/jlms.12372}},
}

\bib{Malcolmson1980}{article}{
    author = {Malcolmson, Peter},
    title = {Determining homomorphisms to skew fields},
    journal = {J. Algebra},
    volume = {64},
    year = {1980},
    number = {2},
    pages = {399--413},
    note = {\doi{10.1016/0021-8693(80)90153-2}},
}

\bib{Matsumura1980}{book}{
    author = {Matsumura, Hideyuki},
    title = {Commutative algebra},
    series = {Mathematics Lecture Note Series},
    volume = {56},
    edition = {2},
    publisher = {Benjamin/Cummings Publishing Co., Inc., Reading, Mass.},
    year = {1980},
}

\bib{Pierce1982}{book}{
    author = {Pierce, Richard S.},
    title= {Associative algebras},
    series = {Graduate Texts in Mathematics.},
    volume = {88},
    publisher = {Studies in the History of Modern Science, 9. Springer-Verlag, New York-Berlin},
    year = {1982},
}

\bib{Rotman2009}{book}{
  author = {Rotman, Joseph J.},
  title = {An introduction to homological algebra},
  series = {Universitext},
  edition = {Second edition},
  publisher = {Springer, New York},
  year = {2009},
  note = {\doi{10.1007/b98977}},
}

\bib{Schofield1985}{book}{
    author = {Schofield, A.},
    title = {Representations of rings over skew fields},
    series = {London Mathematical Society Lecture Note Series},
    volume = {92},
    publisher = {Cambridge University Press, Cambridge},
    year = {1985},
    note = {\doi{10.1017/CBO9780511661914}},
}

\bib{Stafford1976}{article}{
    author = {Stafford, J. T.},
    title = {Completely faithful modules and ideals of simple {N}oetherian rings},
    journal = {Bull. London Math. Soc.},
    volume = {8},
    year = {1976},
    number = {2},
    pages = {168--173},
    note = {\doi{10.1112/blms/8.2.168}},
}

\end{biblist}
\end{bibdiv}

\end{document}